\newtheorem{theorem}{Theorem}[section]
\newtheorem{corollary}[theorem]{Corollary}
\newtheorem{definition}[theorem]{Definition}
\newtheorem{lemma}[theorem]{Lemma}
\newtheorem{proposition}[theorem]{Proposition}
\newtheorem{remark}[theorem]{Remark}
\numberwithin{equation}{section}
\author{Micha{\l} Kos$^a$ and Ma{\l}gorzata Bogdan$^{a,b}$\\\\\\
\noindent $^a$ Department of Mathematics, University of Wroclaw,  Poland \\
\noindent $^b$ Department of Statistics, Lund University,  Sweden }
\title{On the asymptotic properties of SLOPE}
\begin{document}
\maketitle
\abstract{Sorted L-One Penalized Estimator (SLOPE) is a relatively new convex optimization procedure for selecting predictors in high dimensional regression analyses. SLOPE extends LASSO by replacing the $L_1$ penalty norm with a Sorted $L_1$ norm, based on the non-increasing sequence of tuning parameters. This allows SLOPE to adapt to unknown sparsity and achieve an asymptotic minimax convergency rate under a wide range of high dimensional generalized linear models. Additionally, in the case when the design matrix is orthogonal, SLOPE  with the sequence of tuning parameters $\lambda^{BH}$ corresponding to the sequence of decaying thresholds for the Benjamini-Hochberg multiple testing correction provably controls the False Discovery Rate (FDR) in the multiple regression model. In this article we provide new asymptotic results on the properties of SLOPE when the elements of the design matrix are iid random variables from the Gaussian distribution. Specifically, we provide conditions under which the asymptotic FDR  of SLOPE based on the sequence $\lambda^{BH}$  converges to zero and the power converges to 1. We illustrate our theoretical asymptotic results with an extensive simulation study. We also provide precise formulas describing FDR of SLOPE under different loss functions, which sets the stage for future investigation on the model selection properties of SLOPE and its extensions.} 
\section{Introduction}
\label{sec:Intro}

In this article we consider the classical problem of identifying important predictors in the multiple regression model
\begin{equation}\label{model}
Y=X b^0+\epsilon\;\;,
\end{equation}
where $X$ is the design matrix of dimension $n\times p$ and $\epsilon \sim N(0,\sigma^2 I_{n\times n})$ is the noise vector.
In the case when $p>n$, the vector of parameters $b^0$ is not identifiable and  can be uniquely estimated only under certain additional assumptions concerning e.g. its sparsity (i.e. the number of non-zero elements). One of the most popular and computationally tractable methods for estimating  $b^0$ in the case when $p>n$ is the Least Absolute Shrinkage and Selection Operator (LASSO, \cite{LASSO}), firstly introduced in the context of signal processing as the Basis Pursuit Denoising (BPDN, \cite{chen1994basis}). LASSO estimator is defined as 

\begin{equation}\label{LASSO}
\hat b^{L}=argmin_{b}  \left\{ \frac{1}{2} || Y-Xb||_2^2+\lambda |b| _{1}\right\}\;\;,
\end{equation}
where $||\cdot||_2$ denotes the regular Euclidean norm in $R^n$  and $|b|_1=\sum_{j=1}^p |b_j|$ is the $L_1$ norm of $b$. If $X'X=I$ then LASSO is reduced to the simple shrinkage operator imposed on the elements of the vector $\tilde Y = X'Y$,
$$b^L_i=\eta_{\lambda}(\tilde Y_i)\;\;,$$
where 
$$\eta_{\lambda}(x)=\left\{\begin{array}{ccc}
x-\lambda&\mbox{if}&x>\lambda\\
0&\mbox{if}&|x|<\lambda\\
x+\lambda&\mbox{if}&x<-\lambda
\end{array}
\right.\;\;.
$$
In this case the choice of the tuning parameter 
$$\lambda=\lambda^{Bon}=\sigma \Phi^{-1}\left(1-\frac{\alpha}{2p}\right)=\sigma \sqrt{2\log p}(1+o_p)\;\;,$$
allows for the control of the probability of at least one false discovery (Family Wise Error Rate, FWER) at level $\alpha$. 

In the context of high dimensional multiple testing, the Bonferroni correction is often replaced by the \cite{BH} multiple testing procedure aimed at the control of the False Discovery Rate (FDR). Apart from FDR control, this procedure also has appealing properties in the context of the  estimation of the vector of means for multivariate normal distribution with independent entries \citep{ABDJ} or in the context of minimizing the Bayes Risk related to 0-1 loss \citep{ABOS, ABOS2, ABOS3}. In the context of the multiple regression with an orthogonal design, the Benjamini-Hochberg procedure works as follows:

 \begin{itemize}
\item[(1)] Fix $q\in(0,1)$ and sort $\tilde Y$ such that
$$|\tilde Y |_{(1)} \ge |\tilde Y|_{(2)} \ge \ldots \ge |\tilde Y|_{(p)}\;\;,$$ 
\item[(2)] Identify the largest $j$ such that
\begin{equation}\label{BH}
|\tilde Y|_{(j)} \ge  \lambda^{BH}_j= \sigma \Phi^{-1}\left(1- \frac{jq}{2p}\right), 
\end{equation}
 Call this index
$j_{\text{BH}}$.
\item[(3)] Reject $H_{(j)}$ for every $j \leq j_{\text{BH}}$ \;\;.
\end{itemize}

Thus, in BH, the fixed threshold of the Bonferroni correction, $\lambda^{Bon}$, is replaced with the sequence 
$\lambda^{BH}$ of 'sloped' thresholds for the sorted test statistics (see Figure \ref{plot1}). This allows for a substantial increase of power and for an improvement of prediction properties in the case when some of the predictors are relatively weak.

\begin{figure}[h!]
        \centering
                \includegraphics[width=0.8\textwidth]{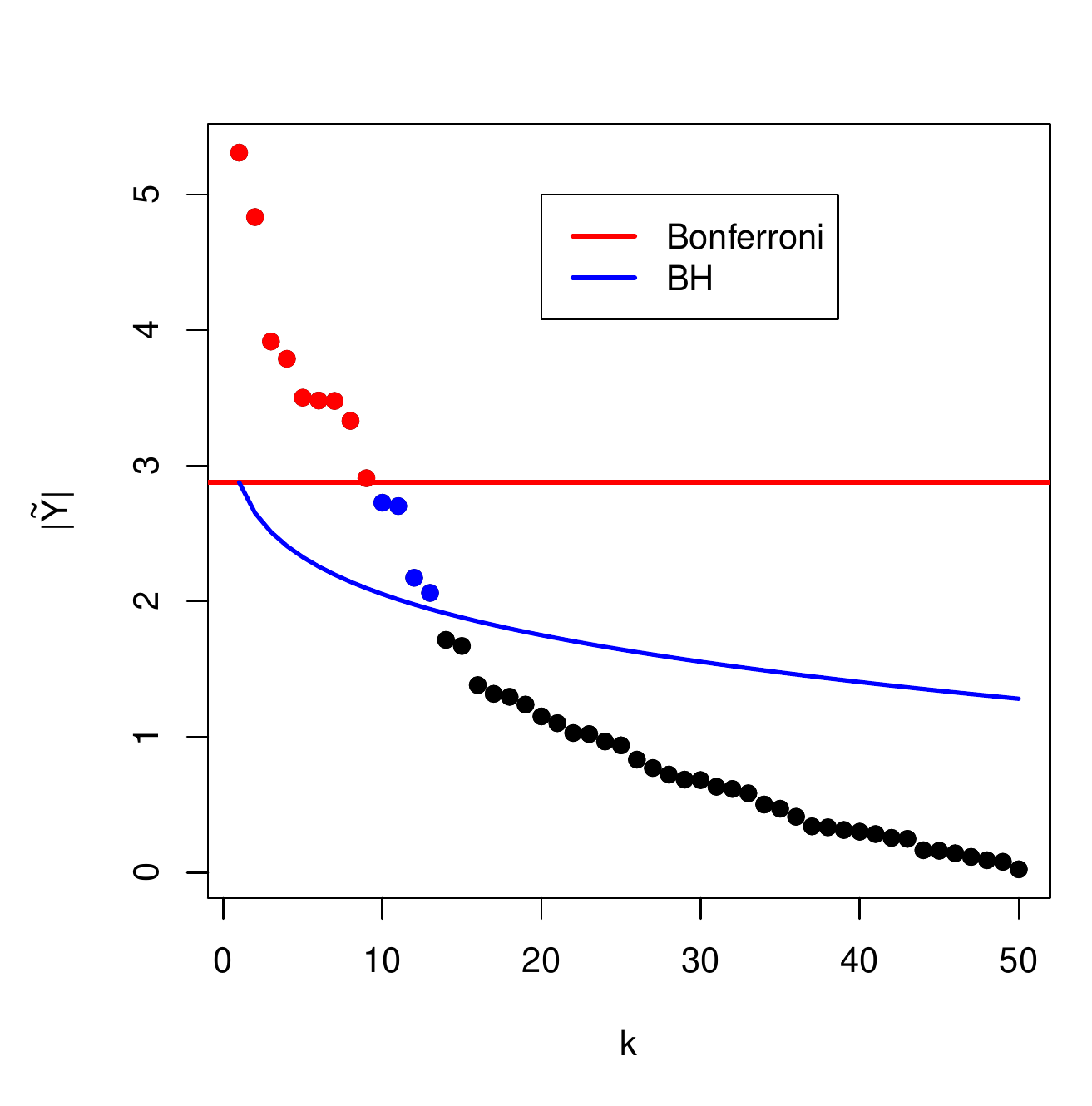}  
								\caption{Bonferroni and Benjamini-Hochberg procedures for multiple testing}
								\label{plot1}
														
 \end{figure}

The idea of using a decreasing sequence of thresholds was subsequently used in the Sorted L-One Penalized Estimator (SLOPE, \cite{SLOPE, SLOPE2})  for the estimation of coefficients in the multiple regression model:
\begin{equation}\label{SLOPE}
b^{SL}=argmin_b \left\{\frac 12\big\|y-Xb\big\|_2^2+\sum_{i=1}^p\lambda_i\big|b\big|_{(i)}\right\},
			\end{equation}
			where $\big|b\big|_{(1)}\geq\ldots\geq \big|b\big|_{(p)}$ are ordered magnitudes of elements of $b$
			and $\lambda=(\lambda_1,\ldots,\lambda_p)$ is a non-zero, non-increasing and non-negative sequence of tuning parameters.
 As noted in (\cite{SLOPE,SLOPE2}),  the function $J_{\lambda}(b)=\sum_{i=1}^p\lambda_i\big|b\big|_{(i)}$
is a norm. To see this, observe that:
\begin{itemize}
\item for any constant $a\in R$ and a sequence $b \in R^p$, $J_{\lambda}(a b)=|a|J_{\lambda}(b)$ 
\item  $J_{\lambda}(b)=0$ if and only if $b=0\in R^p$.
\end{itemize}

To show the triangular inequality: 
\begin{itemize}
\item $J_{\lambda}(x+y)\leq J_{\lambda}(x)+J_{\lambda}(y)$
\end{itemize}
let us denote by $\rho$ the permutation of the set $\{1,\ldots,p\}$ such that
$$\left|x_{\rho(1)}+y_{\rho(1)}\right|\geq \left|x_{\rho(2)}+y_{\rho(2)}\right|\geq ... \geq \left|x_{\rho(p)}+y_{\rho(p)}\right|\;\;.$$
Then
 \[
J_{\lambda}(x+y) = \sum_{i=1}^p\lambda_i\big|x_{\rho(i)}+y_{\rho(i)}\big| 
\leqslant \sum_{i=1}^p\lambda_i\big|x_{\rho(i)}\big|+\sum_{i=1}^p\lambda_i\big|y_{\rho(i)}\big|
\]
and the result can be derived from the well known rearrangement inequality, according to which 
for any permutation $\rho$ and any sequence $x\in R^p$:
\[\sum_{i=1}^p\lambda_i\big|x_{\rho(i)}\big| \leqslant \sum_{i=1}^p \lambda_i |x|_{(i)}\;\;. 
\]
Thus SLOPE is a convex optimization procedure which can be efficiently solved using classical optimization tools. 
It is also easy to observe that in the case $\lambda_1=\ldots=\lambda_p$, SLOPE is reduced to LASSO, while in the case   $\lambda_1>\lambda_2=\ldots=\lambda_p=0$, the Sorted L-One norm $J_{\lambda}$ is reduced to the $L_{\infty}$ norm. 

\begin{figure}[h!]
\centering
[$\lambda=(2,2,2)$]{\includegraphics[width=0.3\textwidth]{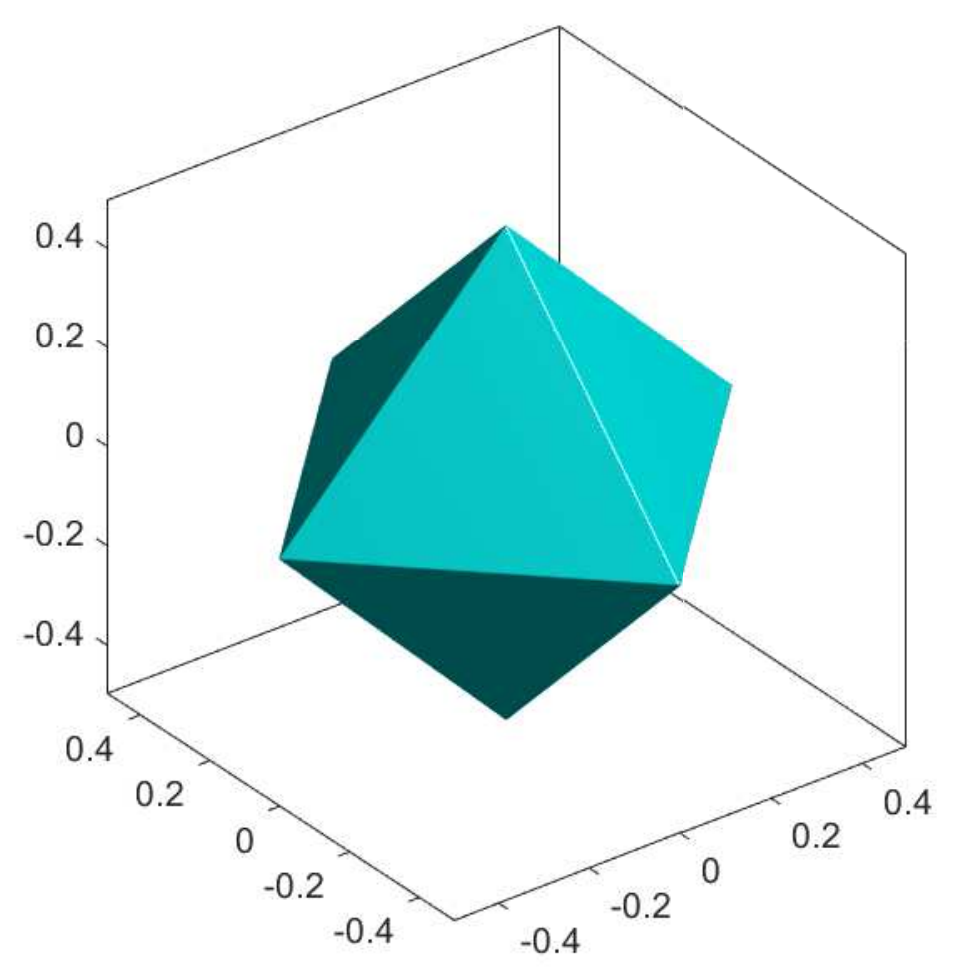}} 
[$\lambda=(2,0,0)$]{\includegraphics[width=0.3\textwidth]{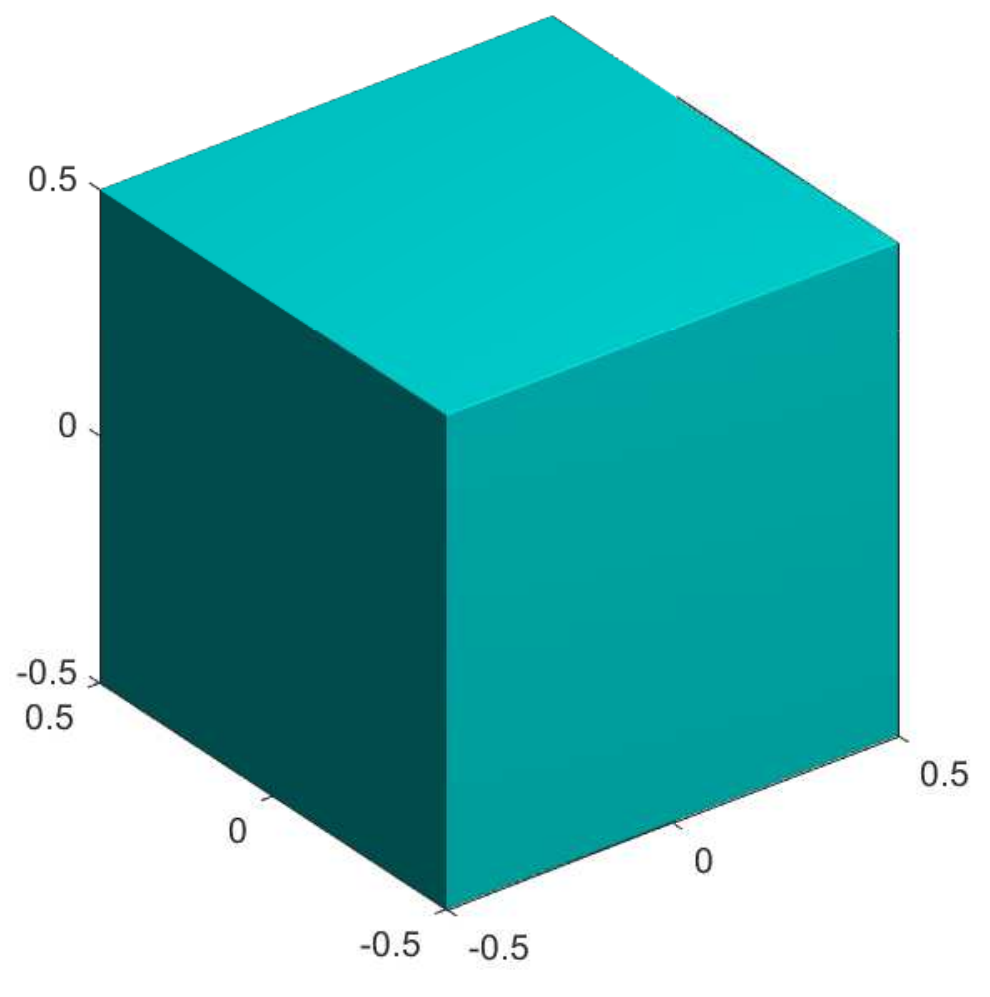}}
[$\lambda=(3,2,1)$]{\includegraphics[width=0.3\textwidth]{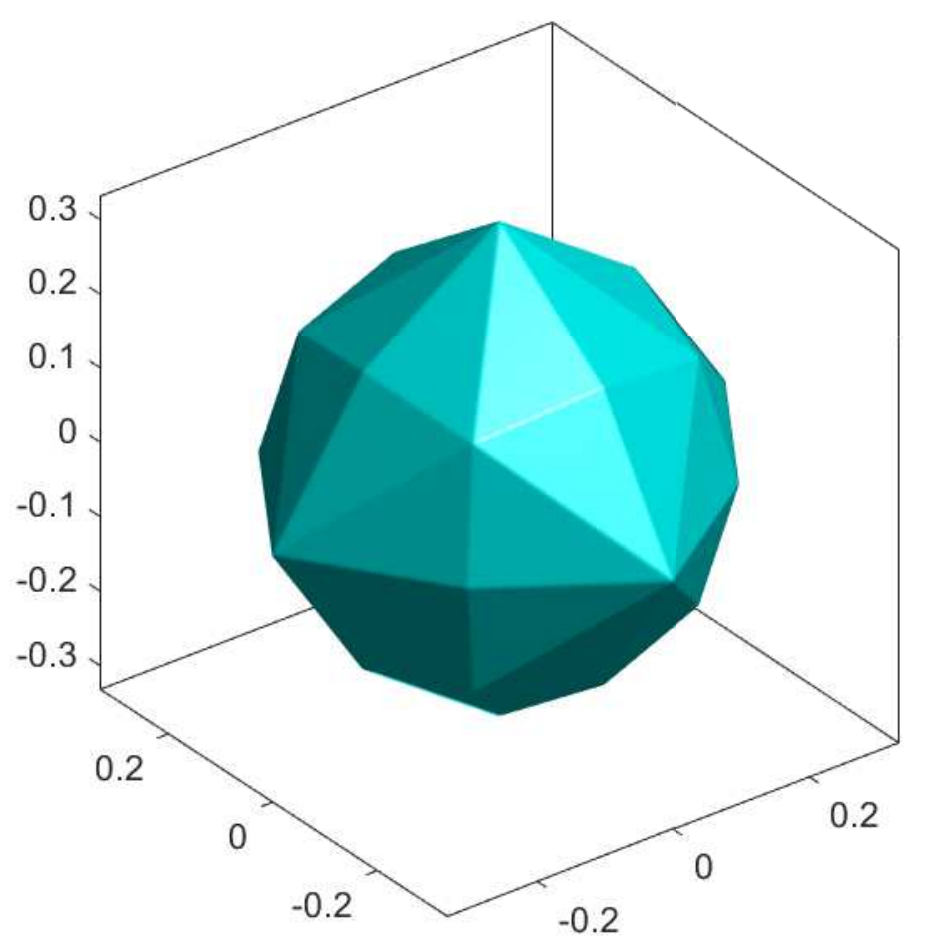}}
\caption{Shapes of different SLOPE spheres}
\label{balls}
\end{figure}

Figure \ref{balls} illustrates different shapes of the unit spheres corresponding to different versions of the Sorted L-One Norm. Since the solutions of SLOPE tend to occur on the edges of respective spheres, Figure 
\ref{balls} demonstrates the large flexibility of SLOPE with respect to the dimensionality reduction.
In the case when $\lambda_1=\ldots=\lambda_p$, SLOPE reduces dimensionality by shrinking the coefficients to zero. In the case when $\lambda_1>\lambda_2=\ldots=\lambda_p=0$, the reduction of dimensionality is performed by shrinking the coefficients towards each other (since the edges of the $l_{\infty}$ sphere correspond to vectors $b$ such that at least two coefficients are equal to each other). In the case when the sequence of thresholding parameters is monotonically decreasing, SLOPE reduces the dimensionality both ways: it shrinks them towards zero and towards each other. Thus it returns sparse and stable estimators, which have recently been proved to achieve minimax rate of convergence in the context of sparse high dimensional regression and logistic regression \citep{su2016, Bellec1, Felix}.

From the perspective of model selection it has been proved in \cite{SLOPE,SLOPE2} that SLOPE with the vector of tuning parameters $\lambda^{BH}$ (\ref{BH}) controls FDR at level $q$ under the orthogonal design. This is no longer true if the inner products between columns of the design matrix are different from zero, which almost always occurs if the predictors are random variables. Similar problems with the control of the number of False Discoveries occur for LASSO. Specifically, in \cite{SLOPE} it is shown that in the case of the Gaussian design with independent predictors, LASSO with a fixed parameter $\lambda$ can control FDR only if the true parameter vector is sufficiently sparse. The natural question is whether there exists a bound on the sparsity under which SLOPE can control FDR ? In this article we address this question and report a theoretical result on the asymptotic control of FDR by SLOPE under the Gaussian design. Our main  theoretical result states that by multiplying  sequence $\lambda^{BH}$  by a constant larger than 1, one can achieve the full asymptotic power and FDR converging to 0 if the number $k(n)$ of nonzero elements in the true vector of the regression coefficients satisfies $k=o\left(\sqrt{\frac{n}{\log p}}\right)$ and the values of these non-zero elements are sufficiently large. We also report results of a simulation study which suggest that the assumption on the signal sparsity is necessary when  using $\lambda^{BH}$ sequence but is unnecessarily strong when using the heuristic adjustment of this sequence, proposed in \cite{SLOPE2}. Simulations also suggest that the asymptotic FDR control is guaranteed independently of the magnitude of the non-zero regression coefficients.


\section{Asymptotic Properties of SLOPE}
\label{sec:FDR_charact}

\subsection{False Discovery Rate and Power}

Let us consider the multiple regression model (\ref{model}) 
and let $\hat b$ be some sparse estimator of $b^0$. The numbers of false, true and all rejections, and the number of non-zero elements in $b^0$ (respectively: $V$, $TR$, $R$, $k$), are defined as follows 
\begin{equation}
V = \#\{j: b_j^0=0\;\mbox{and}\;\hat b_j \neq 0\}, \ \ \ \ \ \ \ R = \#\{j:\hat b_j \neq 0\}\;\;,
\end{equation}
\begin{equation}
TR = \#\{j: b_j^0 \neq 0\;\mbox{and}\;\hat b_j \neq 0\}, \ \ \ \ \ \ \  k = \#\{j: b^0_j\neq 0\}\;\;.
\end{equation}
The False Discovery Rate is defined as:
\begin{equation}
FDR := \mathbb{E}\left(\frac{V}{R \vee 1}\right) 
\label{eq:FDR}
\end{equation}
and the Power  as:
\begin{equation}
\Pi := \frac{\mathbb{E} \left( TR\right)}{k} 
\end{equation}
\subsection{Asymptotic FDR and Power}

We formulate our asymptotic results under the setup where $n$ and $p$ diverge to infinity and $p$ can grow faster than $n$.  Similarly as in the case of the support recovery results for LASSO, we need to impose a constraint on the sparsity of $b^0$, which is measured by the number of truly important predictors $k = \#\{i: b^0_i \neq 0\}$. Thus we  consider the sequence of linear models of the form (\ref{model}) indexed by $n$ and with their "dimensions" characterized by the triplets: $(n,p_n,k_n)$.  For the sake of clarity, further in the text we skip the subscripts by $p$ and $k$.

The main result of this article is as follows:
\begin{theorem}\label{main}
Consider the linear model of the form (\ref{model}) and assume that all elements of the design matrix $X \in \mathbb{R}^{n \times p}$ are i.i.d. random variables from the normal $N(0,1/n)$ distribution.  Moreover, suppose there  exists $\delta >0$ such that
\begin{equation}
\min_{b^0_j \neq 0} \vert b^0_j \vert \geqslant 2\sigma (1+\delta)\sqrt{2 \log p}
\label{ampl}
\end{equation}
and suppose
\begin{equation}\label{ass1}
p\rightarrow \infty,\;\; \frac{k}{p} \rightarrow 0\;\;\mbox{and}\;\;\frac{k^2 \log p}{n} \rightarrow 0.\;\;
\end{equation} 
Then for any $q \in (0,1)$, the SLOPE procedure with the sequence of tuning parameters 
\begin{equation}\label{lBH}
\lambda_i(q,\delta) = \sigma (1+\delta) \Phi^{-1}\left(1- \frac{qi}{2p}\right)
\end{equation} 
has the following properties:\[
FDR \rightarrow 0
, \;\;\;\;\;\;\; \Pi \rightarrow 1.\]
\end{theorem}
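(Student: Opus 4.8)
The plan is to establish both conclusions on a single high-probability \emph{good event} $G$ on which the SLOPE solution can be pinned down exactly by a primal--dual (dual-certificate) construction supported on the true support $S=\{j:b^0_j\neq 0\}$, $|S|=k$. Throughout I would write $w=X'\epsilon$ for the projected noise and use that, conditionally on $X$, $w_j\sim N(0,\sigma^2\|X_j\|^2)$, so $w_j\approx N(0,\sigma^2)$.

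First I would define $G$ as the intersection of four concentration statements, each holding with probability $\to 1$ under (\ref{ass1}): (i) uniform control of column norms, $\max_j\big|\,\|X_j\|^2-1\,\big|\lesssim\sqrt{\log p/n}$; (ii) uniform control of off-diagonal inner products, $\max_{i\neq j}|X_i'X_j|\lesssim\sqrt{\log p/n}$; (iii) near-isometry of the restricted Gram matrix, giving $\|(X_S'X_S)^{-1}\|_{\mathrm{op}}\le 1+o(1)$ and, crucially, row sums of $(X_S'X_S)^{-1}-I$ of order $k\sqrt{\log p/n}$; and (iv) the Gaussian maximum bound $\max_j|w_j|\le\sigma(1+o(1))\sqrt{2\log p}$. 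Item (iii) is exactly where the hypothesis $k^2\log p/n\to 0$ enters: summing $k$ off-diagonal entries each of size $\sqrt{\log p/n}$ forces $k\sqrt{\log p/n}=o(1)$.

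Next, on $G$ I would run the standard primal--dual witness for SLOPE. Assuming a solution supported on $S$ and using that the large signals occupy the top $k$ ranks, the restricted stationarity equation gives $\hat b_S=b^0_S+(X_S'X_S)^{-1}(w_S-v_S)$, where $v_S$ collects the signs times the leading tuning constants, $|v_j|\approx\sigma(1+\delta)\sqrt{2\log p}$. The amplitude assumption (\ref{ampl}) supplies a factor-two margin: since $|w_j|\lesssim\sigma\sqrt{2\log p}$, $|v_j|\approx\sigma(1+\delta)\sqrt{2\log p}$, and the correction term is $o(\sqrt{\log p})$ by (iii), every coordinate of $\hat b_S$ keeps the sign of $b^0_S$ and stays bounded away from $0$. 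This yields $TR=k$ on $G$, hence $\Pi=\mathbb E[TR]/k\ge\mathbb P(G)\to 1$.

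The decisive and hardest step is verifying dual feasibility on $N=S^c$, i.e.\ that the residual gradient $g_N=X_N'X_S(b^0_S-\hat b_S)+w_N$ satisfies the sorted-norm (majorization) subgradient inequalities $\sum_{i\le m}|g_N|_{(i)}\le\sum_{i=k+1}^{k+m}\lambda_i$ for all $m$. I would bound the interference coordinatewise by $|X_j'X_S(b^0_S-\hat b_S)|\le\max_{i\in S}|X_i'X_j|\cdot\|\hat b_S-b^0_S\|_1\lesssim\sqrt{\log p/n}\cdot k\sqrt{\log p}=o(\sqrt{\log p})$, again invoking $k^2\log p/n\to 0$; then $\max_{j\in N}|g_{N,j}|\le\sigma(1+o(1))\sqrt{2\log p}$, which is strictly below $\lambda_{k+1}\approx\sigma(1+\delta)\sqrt{2\log p}$ by the $(1+\delta)$ inflation together with $\log(p/k)\sim\log p$ (from $k/p\to 0$). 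The remaining majorization inequalities would follow by comparing the order statistics of the approximately independent Gaussian null scores against the inflated Gaussian-quantile thresholds $\lambda_i$, where the $(1+\delta)$ factor provides uniform slack. This certifies $\hat b$ as the unique SLOPE solution with $V=0$ on $G$, whence $FDR=\mathbb E[V/(R\vee 1)]\le\mathbb P(V>0)\le\mathbb P(G^c)\to 0$. I expect this majorization verification to be the main obstacle: unlike LASSO's coordinatewise dual bound, SLOPE demands the full sorted/cumulative inequality, and controlling the accumulated interference across all null coordinates simultaneously is precisely what makes $k^2\log p/n\to 0$ (rather than the naive $k\log p/n\to 0$) the natural sparsity requirement.
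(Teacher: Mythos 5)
Your strategy is to exhibit a single high-probability event $G$ on which SLOPE makes \emph{no} false discoveries at all ($V=0$) and recovers every signal, so that $FDR\le P(G^c)\to 0$. This proves something strictly stronger than the theorem --- exact support recovery --- and that stronger statement is false in part of the allowed parameter regime, so the approach cannot be repaired by sharpening the concentration steps. Concretely, the assumptions permit $k=p^{1/2}$ (take $n\gg p\log p$), and your dual-feasibility step rests on the claim that $k/p\to 0$ forces $\log(p/k)\sim\log p$; it does not. With $k=p^{1/2}$ one has $\log(p/k)=\tfrac12\log p$, hence $\lambda_{k+1}\approx\sigma(1+\delta)\sqrt{\log p}$, while the maximum null score $\max_{j\in S^c}|X_j'\epsilon|$ concentrates near $\sigma\sqrt{2\log p}$; for $\delta<\sqrt{2}-1$ the certificate fails outright. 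Worse, by the paper's exact support characterization (Theorem \ref{Lem1}), a null coordinate $i$ is selected precisely when $|T_i|>\lambda_R$, and full power gives $R\ge k$, so any null with $|T_i|>\lambda_k$ is a false discovery; the expected number of such nulls is of order $p\,(qk/p)^{(1+\delta)^2}=q^{(1+\delta)^2}p^{1-(1+\delta)^2/2}\to\infty$ in this regime, and conditional independence given $\epsilon$ then yields $P(V\ge 1)\to 1$, not $\to 0$. The theorem is nevertheless true because $FDR=\mathbb{E}\left[V/(R\vee 1)\right]$ can vanish while $V\ge 1$: the false discoveries are $o(R)$ in the appropriate averaged sense.

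This is exactly why the paper takes a different route and never attempts a primal--dual witness. It writes $FDR=\sum_r r^{-1}\sum_{i\in S^c}P(|T_i|>\lambda_r,\,T\in H_r)$ using Theorems \ref{Lem2} and \ref{Lem1}, localizes $R\le k^*$ via the Su--Cand\`es resolvent set, replaces $T$ by $M=X'\epsilon+b^0$ after bounding $\Gamma=(\mathbb{I}-X'X)(\hat b-b^0)$ in sup norm (Lemmas \ref{Gamma_cond} and \ref{T_to_M}), and then exploits the $1/r$ weights together with the near-disjointness of the events $\{M^{(i)}\in H_r^{\gamma}\}$ (so that $\sum_r P(M^{(i)}\in H_r^{\gamma})$ stays bounded) and the tail bound $P(|M_i|>\lambda_r-\gamma)\lesssim(qr/p)^{(1+\delta/2)^2}$, arriving at a bound of order $(k^*/p)^{\delta}\to 0$. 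A secondary issue in your write-up, separate from the fatal one above: even where $\log(p/k)\sim\log p$ does hold, the coordinatewise bound $\max_j|g_{N,j}|<\lambda_{k+1}$ cannot certify SLOPE dual feasibility, since the sorted-norm subgradient requires $\sum_{i\le m}|g_N|_{(i)}\le\sum_{i=k+1}^{k+m}\lambda_i$ for \emph{every} $m$; for $m$ of order $p$ the right side is only a constant times $m$, while $m\cdot\max_j|g_{N,j}|$ is of order $m\sqrt{\log p}$, so the genuine order-statistics comparison you defer is unavoidable --- and it is precisely the content the paper handles through the beta/Erlang order-statistics estimates in the proof of Lemma \ref{Conv}.
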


\begin{proof}
The proof of Theorem \ref{main} makes extensive use of the results on the asymptotic properties of SLOPE reported in \cite{su2016}. The roadmap of this proof is provided in Section \ref{sec:RoadMap}, while the proof details can be found in the Appendix and in supplementary materials. 
\end{proof}

\begin{remark}[The assumption on the design matrix]
 The assumption that the elements of $X$ are i.i.d. random variables from the normal $N(0,1/n)$ distribution is technical. We expect that the results can be generalized to the case where the predictors are independent, sub-gaussian random variables. The assumption that the variance is equal to $1/n$ can be satisfied by an appropriate scaling of such a design matrix. 
As compared to the classical standardization towards unit variance, our scaling allows for the control of FDR with the sequence of the tuning parameters $\lambda$, which does not depend on the number of observations $n$. If the data are standardized such that $X_{ij}\sim N(0,1)$, then Theorem \ref{main} holds when the sequence of tuning parameters (\ref{lBH}) and  the lower bound on the signal magnitude (\ref{ampl}) are both multiplied by $n^{-0.5}$. 
\end{remark}

\begin{remark}[The assumption on the signal strength]
Our assumption on the signal strength is not very restrictive. When using the classical standardization of the explanatory variables (i.e. assuming that $X_{ij}\sim N(0,1)$), this assumption allows for the magnitude of the signal to converge to zero at such a rate that
$$\min_{b^0_j \neq 0} \vert b^0_j \vert \geqslant 2\sigma (1+\delta)\sqrt{\frac{2 \log p}{n}}\;\;.$$ 
This assumption is needed to obtain the power that converges to 1.  
The proof of Theorem \ref{main} implies that this assumption is not needed for the asymptotic FDR control  if $k$ is bounded by some constant. Moreover, our simulations suggest that the asymptotic FDR control holds independently of the signal strength if only $k$ satisfies the assumption (\ref{ass1}). The proof of this conjecture would require a substantial refinement of the proof techniques and remains an interesting topic for  future work.
\end{remark}

\subsection{Simulations}
In this section we present results of  the simulation study.
The data are generated according to the linear model:
\[Y = Xb^0 +\epsilon\;\;,\]
where elements of the design matrix $X$ are i.i.d. random variables from the normal $N(0,1/n)$ distribution and  $\epsilon$ is independent of $X$ and comes from the standard multivariate normal distribution $N(0,I)$. The parameter vector $b^0$ has $k$ non-zero elements and $p-k$ zeroes.

We present the comparison of the three methods - two versions of SLOPE and LASSO: 
\begin{enumerate}
\item SLOPE with the sequence of the tuning parameters  provided by the formula (\ref{lBH}), denoted by ''SLOPE".
\item SLOPE with the sequence:
\[
\lambda_i(q) = \Bigg \{\begin{array}{ccc}
\sigma \Phi^{-1}(1-q/2p) & if & i=1\\
\min\left(\lambda_{i-1}, \sigma\Phi^{-1}(1-qi/2p)\sqrt{1+\frac{\sum_{j<i}\lambda_j^2}{n-i-2}}\right) & if & i>1\;\;.
\end{array}
\]
This sequence was proposed in \cite{SLOPE2} as a heuristic correction which takes into account
the influence of the cross products between the columns of the design matrix $X$. We refer to this procedure as \textit{heuristic SLOPE} ("SLOPE\_heur").
\item LASSO with the tuning parameter:
\begin{equation}\label{lam}
\lambda= \sigma (1+\delta) \Phi^{-1}\left(1- \frac{q}{2p}\right)\;\;.
\end{equation}
\end{enumerate}
The tuning parameter for LASSO is equal to the first element of the tuning sequence of SLOPE, so FDR of LASSO and SLOPE are approximately equal when $k=0$.

\begin{figure}[h!]

        \centering
                \includegraphics[width=1\textwidth]{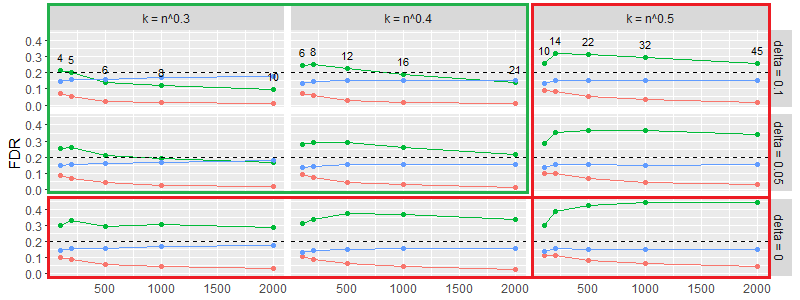} 
                \includegraphics[width=1\textwidth]{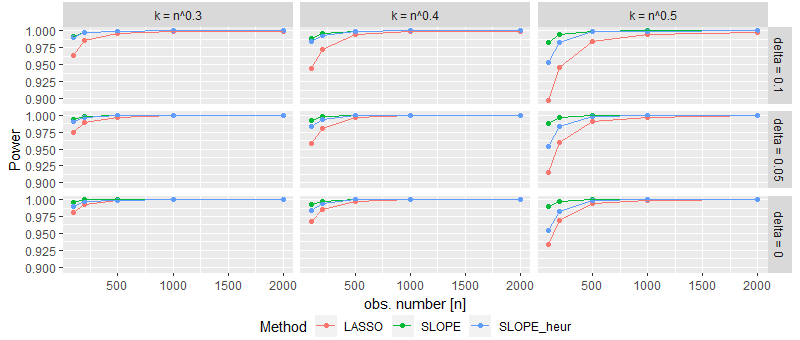} 
															\caption{FDR and Power of different procedures as  functions of $n$  and the parameters $\delta$ (see  (\ref{lBH}) and (\ref{lam})) and $\alpha$,  for $p=0.05n^{1.5}$, $k=round(n^{\alpha})$, $b^0_1=\ldots=b^0_k=2(1+\delta)\sqrt{2 \log p}$ and $q=0.2$. The results presented in a green rectangle correspond to the values of parameters which meet  assumptions of Theorem \ref{main}. 
The numbers above green lines correspond to actual values of the parameter $k$. Each point was obtained by averaging false and true positive proportions over at least 500 independent replicates.}
\label{FDR_str}															
 \end{figure}

Figure \ref{FDR_str} presents FDR and  Power  of different procedures.
First, let us concentrate on the behavior of SLOPE when the sequence of tuning parameters is defined as in Theorem \ref{main} (green line in each sub-plot). The green rectangle contains plots where the sequences of tuning parameters and the signal sparsity $k(n)$ satisfy the assumptions of Theorem \ref{main}. It is noticeable that in this area  FDR of SLOPE slowly converges to zero and the Power converges to 1. Moreover, FDR is close to or below the nominal level $q=0.2$ for the whole range of considered values of $n$. It is also clear that larger values of $\delta$ lead  to the more conservative versions of SLOPE. 

In the red area the assumptions of  Theorem \ref{main} are violated.
Here we can see that when $\delta>0$ and $\alpha=0.5$, FDR is still a decreasing function of $n$ but the rate of this decrease is slow and  
FDR is substantially above level $q=0.2$ even for $n=2000$. In the case when $\delta=0$ (i.e. when the original $\lambda^{BH}$ sequence is used), FDR stabilizes at the value which exceeds the nominal level. 

Let us now turn our attention to other methods. We can observe that LASSO is the most conservative procedure and that, as expected, its FDR converges to 0 when $k$ increases. Since the simulated signals are strong, this does not lead to a substantial decrease of Power as compared to SLOPE. Interestingly, SLOPE with a heuristic choice of tuning parameters seems to provide a stable FDR control over the whole range of considered parameter values. 
This suggests that the upper bound on $k$ provided in assumption (\ref{ass1}) could be relaxed when working with this heuristic sequence. The proof of this claim remains an interesting topic for further research. 


\begin{figure}[h!]

        \centering
                \includegraphics[width=1\textwidth]{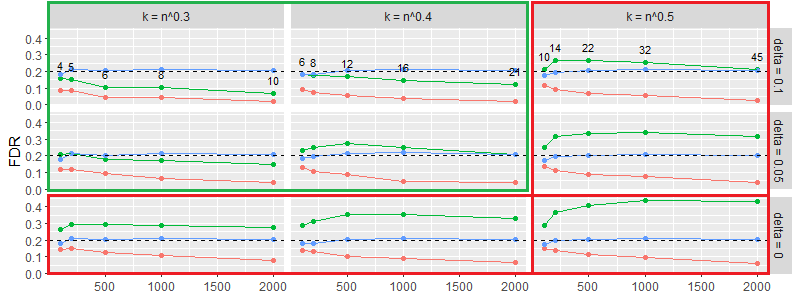} 
                \includegraphics[width=1\textwidth]{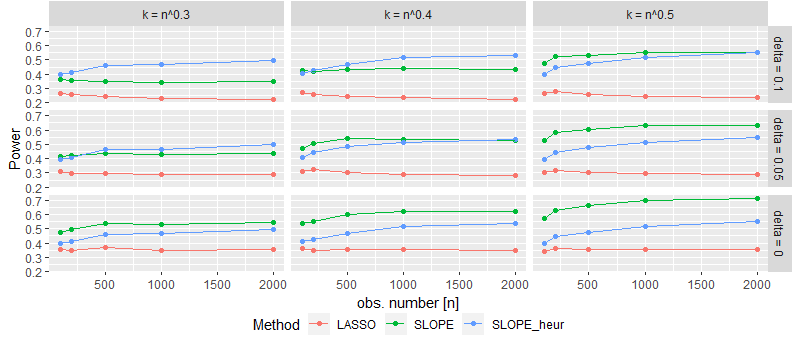} 
															\caption{FDR and Power of different procedures as  functions of $n$  and the parameters $\delta$ (see  (\ref{lBH}) and (\ref{lam})) and $\alpha$,  for $p=0.05n^{1.5}$, $k=round(n^{\alpha})$, $b^0_1=\ldots=b^0_k=0.9\sqrt{2 \log p}$ and $q=0.2$. The results presented in a green rectangle correspond to the values of parameters which meet  assumptions of Theorem \ref{main}, except the condition on signal strength. 
The numbers above green lines correspond to actual values of the parameter $k$. Each point was obtained by averaging false and true positive proportions over at least 500 independent replicates}
\label{FDR_small}
 \end{figure}
Figure \ref{FDR_small} presents simulations for the case when $b^0_1=\ldots=b^0_k=0.9\sqrt{2 \log p}$, i.e. when the signal magnitude does not satisfy the assumption (\ref{ampl}). Here FDR of SLOPE behaves similarly as in the case of strong signals. These results suggest that the assumption on the signal strength might not be necessary in the context of FDR control. Figure \ref{FDR_small} also illustrates a strikingly good control of FDR by the SLOPE with the heuristically adjusted sequence of tuning parameters.
LASSO is substantially more conservative than both versions of SLOPE. Its FDR converges to zero, which in the case of such moderate signals leads to a substantial decrease of Power as compared to SLOPE.

\section{Roadmap of the Proof}
\label{sec:RoadMap}

In the first part of this section we characterize the support of the SLOPE estimator.
The proofs of the Theorems presented in this part rely only on differentiability and convexity of the loss function. Therefore we decided to present them in a general form, will be useful for a further work on extensions of SLOPE for Generalized Linear Models or Gaussian Graphical Models.

\subsection{Support of the SLOPE estimator under the general loss function}
Let us consider the following generalization of SLOPE:
\begin{equation}\label{gSLOPE}
\hat b=argmin_{b} \left\{l(b)+\sum_{i=1}^p\lambda_i\big|b\big|_{(i)}\right\},
			\end{equation}
where $l(b)$ is a convex and a differentiable loss function (e.g. $0.5\Vert Y-X'b\Vert^2_2$ for the multiple linear regression).
Let $R$ denote the number of non-zero elements of $\hat b$. 


The following Theorems \ref{Lem2} and \ref{Lem1} characterize events $\{\hat{b}_i \neq 0\}$ and\\ $\{R=r\}$ by using the gradient $U(b)$ of the negative loss function  $-l(b)$;
\begin{equation}
U(b)=(U_1(b),...,U_p(b))'= -\triangledown (l(b))\;\;.
\label{score_v}
\end{equation}
Additionally, for $a >0$ we define the vector $T(a)$ as 
\[
T(a) = U(\hat{b})+ a\hat{b}\;\;.
\]
Thus, $T_i(a)=U_i(\hat{b})$ if $\hat{b}_i=0$. Also, the additional term $a\hat{b}_i$ has the same sign as  $U_i(\hat{b})$, so $|T_i(a)|>|U_i(\hat b)|$ if $\hat{b}_i\neq 0$. 
By calculating the subgradient of the objective function of the LASSO estimator, it is possible to check that LASSO selects these variables for which the respective coordinates of $|T(a)|$ exceed the value of the tuning parameter $\lambda$. In \cite{SLOPE2} the support of $\hat b$ for SLOPE under the orthogonal design is provided. It is shown that, similarly as in the case of the Benjamini-Hochberg correction for multiple testing, it is not sufficient to compare the ordered coordinates of $|T(a)|$ to the respective values of the decaying sequence of tuning parameters. It can happen that while performing this simple operation one could eliminate regressors with the value of $|T(a)|$ larger than for some of the regressors retained in the model. The SLOPE estimator preserves the ordering of $|T(a)|$. Thus, identification of the SLOPE support is more involved and  requires introduction of the following sets $H_r$: 
for $r\in\{1,\ldots,p\}$ we define 
\begin{equation}\label{hr}
H_r =  \Big\{
w \in \mathbb{R}^p: \forall_{j \leqslant r} \sum_{i=j}^r \lambda_i < \sum_{i=j}^r \vert w \vert_{(i)} \;\;\; and \;\;\; \forall_{j \geqslant r+1} \sum_{i=r+1}^j\lambda_i \geqslant \sum_{i=r+1}^j \vert w\vert_{(i)}
\Big\}.
\end{equation}

\begin{theorem}
\label{Lem2}
Consider the optimization problem $\ref{gSLOPE}$ with an arbitrary sequence $\lambda_1\geqslant \lambda_2 \geqslant ... \geqslant \lambda_p \geqslant 0$. Assume that $l(b)$ is a convex and a differentiable function. Then for any $a>0$, 
$$R = r \Longleftrightarrow T(a) \in H_r\;\;.$$
\end{theorem}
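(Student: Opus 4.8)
The plan is to reduce the general problem (\ref{gSLOPE}) to an orthogonal-design SLOPE problem, for which the support characterisation through the sets $H_r$ of (\ref{hr}) is already available in \cite{SLOPE2}. First I would write down the first-order optimality (subgradient) condition for the convex problem (\ref{gSLOPE}): since $l$ is convex and differentiable and $J_\lambda$ is a norm, $\hat b$ is a minimiser if and only if $0 \in \nabla l(\hat b) + \partial J_\lambda(\hat b)$, that is $U(\hat b) \in \partial J_\lambda(\hat b)$ with $U$ as in (\ref{score_v}). This is the only place where convexity and differentiability of $l$ enter, which is why the statement can be kept at this level of generality.

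The key step is an algebraic identity that converts this stationarity condition into a proximal one. Fix $a>0$ and consider the strictly convex functional $G(b) = \tfrac12\big\|b - T(a)/a\big\|_2^2 + \tfrac1a J_\lambda(b)$, whose unique minimiser $b^\ast$ is characterised by $T(a) - a b^\ast \in \partial J_\lambda(b^\ast)$. Substituting $b=\hat b$ and using the definition $T(a)=U(\hat b)+a\hat b$ gives $T(a)-a\hat b = U(\hat b) \in \partial J_\lambda(\hat b)$, so $\hat b$ satisfies the optimality condition for $G$ and hence $\hat b = b^\ast = \mathrm{prox}_{\frac1a J_\lambda}\!\big(T(a)/a\big)$. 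In other words, $\hat b$ coincides with the SLOPE estimator for an orthogonal design with ``observation vector'' $\tilde Y = T(a)/a$ and tuning sequence $\lambda/a$. Since this holds simultaneously for every $a>0$, the derivation already explains why the membership $T(a)\in H_r$ must be independent of $a$.

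I would then invoke the orthogonal-design support characterisation of \cite{SLOPE2}: the number of non-zero coordinates of $\mathrm{prox}_{\frac1a J_\lambda}(\tilde Y)$ equals $r$ exactly when the sorted observations satisfy $\sum_{i=j}^r \tfrac{\lambda_i}{a} < \sum_{i=j}^r |\tilde Y|_{(i)}$ for all $j\le r$ and $\sum_{i=r+1}^j \tfrac{\lambda_i}{a} \ge \sum_{i=r+1}^j |\tilde Y|_{(i)}$ for all $j\ge r+1$. Multiplying both families of inequalities by $a$ and using $|\tilde Y|_{(i)} = |T(a)|_{(i)}/a$ (sorting commutes with scaling by $a>0$) turns them into precisely the two defining conditions of $H_r$ in (\ref{hr}). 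This yields $R=r \Longleftrightarrow T(a)\in H_r$, as claimed, and the equivalence transfers the sign-matching property noted in the text (that $U_i(\hat b)$ and $\hat b_i$ share a sign, so $|T_i(a)|>|U_i(\hat b)|$ on the support) into the ordering that places the selected coordinates among the $r$ largest entries of $|T(a)|$.

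The main obstacle is the orthogonal characterisation itself, i.e.\ justifying the asymmetry between the \emph{strict} prefix inequalities for $j\le r$ and the \emph{weak} ones for $j\ge r+1$. The weak inequalities on the complement are the dual-feasibility part of $U(\hat b)\in\partial J_\lambda(\hat b)$ restricted to the zero block and are comparatively routine; the strict inequalities on the selected block encode that the $r$-th largest coordinate is genuinely non-zero rather than on the knife-edge, and establishing them requires a careful treatment of the clusters of equal magnitude produced by the sorted penalty (a pool-adjacent-violators / averaging argument on each level set). If one prefers not to cite \cite{SLOPE2}, this clustering analysis of $\partial J_\lambda$ is exactly the part that must be carried out directly, and it is where the bookkeeping is delicate.
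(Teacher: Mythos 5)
Your proposal is correct, but it takes a genuinely different route from the paper's own proof. You reduce the general problem (\ref{gSLOPE}) to an orthogonal-design problem: from the stationarity condition $U(\hat b)\in\partial J_\lambda(\hat b)$ (valid by the subdifferential sum rule, since $l$ is differentiable and $J_\lambda$ is finite everywhere) you deduce the fixed-point identity $\hat b=\mathrm{prox}_{\frac{1}{a}J_\lambda}\bigl(T(a)/a\bigr)$, whose right-hand side is strictly convex and hence has $\hat b$ as its unique minimiser; rescaling the orthogonal support characterisation by $a$ then gives exactly the defining inequalities of $H_r$ in (\ref{hr}), and the observation that this works for every $a>0$ neatly explains the $a$-independence of the statement. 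The paper instead proves the equivalence directly in the general setting (see the supplementary materials): for the forward implication it perturbs the minimiser itself, pulling the nonzero coordinates of ranks $j,\dots,r$ towards zero by a small $h>0$ to obtain the strict prefix inequalities, and pushing zero coordinates away from zero with arbitrary signs to obtain the weak suffix inequalities, with the converse following by contradiction between these two families; the limits $h\to 0$ of difference quotients are where convexity and differentiability of $l$ enter. What your route buys is modularity and brevity: one subgradient identity plus a citation, with all knife-edge and clustering bookkeeping delegated to the orthogonal case. What the paper's route buys is self-containedness and byproducts: the $H_r$ characterisation in precisely this strict/weak form is introduced by this paper rather than quoted, so it is not clear that \cite{SLOPE2} states it verbatim for your step three --- the fallback you mention (carrying out the clustering analysis of $\partial J_\lambda$ directly) may well be unavoidable, and that analysis is essentially what the paper's perturbation argument accomplishes; moreover, the same directional-derivative computations also deliver the sign-agreement and ordering facts (Corollary \ref{biUi} and Proposition \ref{Ui_order}) that the paper reuses in the proof of Theorem \ref{Lem1}, which your reduction does not produce on its own.
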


\begin{theorem}\label{Lem1}
Consider the optimization problem $\ref{gSLOPE}$ with an arbitrary sequence $\lambda_1\geqslant \lambda_2 \geqslant ... \geqslant \lambda_p \geqslant 0$. Assume that $l(b)$ is a convex and a differentiable function and $R = r$. Then for any $a>0$ it holds:
\[
\left( \hat{b}_i \neq 0 \right)
\Leftrightarrow \left(\vert T_i(a) \vert >\lambda_r\right) 
\]
and
\[
\left( \hat{b}_i \neq 0 \right)
\Rightarrow \left(\vert U_i(\hat b) \vert \geqslant\lambda_r\right) 
\]

Moreover if we assume that $\lambda_1>\lambda_2>...>\lambda_p\geqslant 0$, then:
\[
\left( \hat{b}_i \neq 0 \right)
\Leftarrow 
\left( \vert U_i(\hat b) \vert \geqslant \lambda_r \right)
\]
\end{theorem}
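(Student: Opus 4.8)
The plan is to fix a minimiser $\hat b$ with $R=r$ and to regard $a>0$ as a free parameter, so that only $T(a)=U(\hat b)+a\hat b$ varies while the support $S=\{i:\hat b_i\neq 0\}$, of cardinality $r$, stays fixed. I would start from the two elementary facts recorded just before the statement: if $\hat b_i=0$ then $T_i(a)=U_i(\hat b)$, whereas if $\hat b_i\neq 0$ the sign alignment gives $|T_i(a)|=|U_i(\hat b)|+a|\hat b_i|$, which is strictly increasing in $a$ and diverges as $a\to\infty$. Applying Theorem~\ref{Lem2} yields $T(a)\in H_r$ for every $a>0$; reading the defining inequalities of $H_r$ in (\ref{hr}) at $j=r$ and $j=r+1$ produces the separation $|T(a)|_{(r)}>\lambda_r\geqslant\lambda_{r+1}\geqslant|T(a)|_{(r+1)}$. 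Hence for each $a$ the set $E(a):=\{i:|T_i(a)|>\lambda_r\}$ has exactly $r$ elements, namely the $r$ coordinates of largest magnitude of $T(a)$.

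The heart of the argument is to identify $E(a)$ with $S$ for all $a$ at once. On the zero coordinates the indicator $\mathbf 1[\,|T_i(a)|>\lambda_r\,]$ is independent of $a$, since there $|T_i(a)|=|U_i(\hat b)|$; consequently the number of zero coordinates contained in $E(a)$ is constant in $a$. For $a>\max_j|U_j(\hat b)|/\min_{j\in S}|\hat b_j|$ every support coordinate satisfies $|T_i(a)|\geqslant a|\hat b_i|>\max_{\hat b_j=0}|U_j(\hat b)|$, so the $r$ largest magnitudes of $T(a)$ are attained on $S$ and $E(a)$ contains no zero coordinate; being constant, this count is therefore $0$ for every $a$. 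Thus $E(a)\subseteq S$, and since $|E(a)|=r=|S|$ I conclude $E(a)=S$ for all $a>0$.

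The three assertions then follow. The identity $E(a)=S$ is exactly the equivalence $(\hat b_i\neq 0)\Leftrightarrow(|T_i(a)|>\lambda_r)$. For the implication into $U$, fix $i\in S$: since $|U_i(\hat b)|+a|\hat b_i|=|T_i(a)|>\lambda_r$ for every $a>0$, letting $a\to 0^+$ gives $|U_i(\hat b)|\geqslant\lambda_r$. For the converse under a strictly decreasing sequence, take $i$ with $\hat b_i=0$; because $E(a)=S$ this coordinate lies among the bottom $p-r$ magnitudes of $T(a)$, so $|U_i(\hat b)|=|T_i(a)|\leqslant|T(a)|_{(r+1)}\leqslant\lambda_{r+1}<\lambda_r$, and the contrapositive yields $(|U_i(\hat b)|\geqslant\lambda_r)\Rightarrow(\hat b_i\neq 0)$.

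I expect the only genuinely delicate step to be the uniform identification $E(a)=S$: the vector $T(a)$ moves with $a$ while the threshold $\lambda_r$ is fixed, so one must exclude a zero coordinate ever rising above $\lambda_r$ and a support coordinate ever dropping below it. The sign-alignment fact is precisely what makes this manageable, as it pins the zero magnitudes of $T(a)$ at $|U_i(\hat b)|$ while forcing the support magnitudes to grow monotonically, and Theorem~\ref{Lem2} supplies the clean gap at level $\lambda_r$ that the counting argument exploits. The remaining manipulations are routine bookkeeping with the cumulative-sum inequalities defining $H_r$.
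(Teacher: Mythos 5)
Your proof is correct, but it takes a genuinely different route from the paper's. The paper proves Theorem \ref{Lem1} directly from the optimality of $\hat b$: it perturbs single coordinates (pulling a nonzero $\hat b_i$ toward zero, pushing a zero coordinate away from zero), tracks the resulting change of the sorted-$\ell_1$ penalty, and lets the step size tend to zero, obtaining $\mathrm{sgn}(\hat b_i)\,U_i(\hat b)\geqslant\lambda_r$ at nonzero coordinates (whence both forward implications, by adding $a|\hat b_i|$ or taking $a=0$) and $|U_i(\hat b)|\leqslant\lambda_{r+1}$ at zero coordinates (whence both converse implications, by contradiction). You instead take Theorem \ref{Lem2} as the engine: $T(a)\in H_r$ for every $a>0$ pins down the separation $|T(a)|_{(r)}>\lambda_r\geqslant\lambda_{r+1}\geqslant|T(a)|_{(r+1)}$, so the exceedance set $E(a)$ has exactly $r$ elements, and your homotopy-in-$a$ counting argument (zero coordinates have $a$-independent $|T_i(a)|$, support coordinates dominate for large $a$) identifies $E(a)=S$ for all $a$ simultaneously; the three assertions then follow by continuity as $a\to 0^+$ and by reading off the $(r+1)$-st order statistic. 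This is an elegant clarification of the logical structure: given Theorem \ref{Lem2}, Theorem \ref{Lem1} needs no further appeal to optimality. What the paper's approach buys is self-containedness and the by-product Corollary \ref{biUi} (sign alignment), which the paper's own proof of Theorem \ref{Lem2} uses.

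That last point is the one caveat you must address. The sign-alignment fact you quote from the text preceding the theorems is, in the paper, established only as a by-product of the proof of Theorem \ref{Lem1} itself (inequality (\ref{ineq:sign}) and Corollary \ref{biUi}), and it also feeds the paper's proof of Theorem \ref{Lem2}, on which you rely; taken literally inside the paper's dependency graph, your argument is therefore circular. The repair is cheap and should be stated: either prove sign alignment independently in one line (if $\hat b_i\neq 0$, moving $\hat b_i$ toward zero decreases the penalty, so optimality of $\hat b$ forces $\mathrm{sgn}(\hat b_i)\,U_i(\hat b)\geqslant 0$), or avoid it altogether, which your argument permits: the large-$a$ step only needs $|T_i(a)|\geqslant a|\hat b_i|-|U_i(\hat b)|\to\infty$, and the $a\to 0^+$ step only needs continuity of $T_i(a)$ in $a$, since $T_i(a)\to U_i(\hat b)$. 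With either repair the proof is complete.
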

The proofs of Theorems \ref{Lem2} and \ref{Lem1} are provided in the supplementary materials.

\subsection{FDR of SLOPE for the general loss function}

\begin{corollary}\label{FDRscore}
Consider the optimization problem $\ref{gSLOPE}$ with an arbitrary sequence $\lambda_1\geqslant \lambda_2 \geqslant ... \geqslant \lambda_p \geqslant 0$. Assume that $l(b)$ is a convex and a differentiable function. Then for any $a>0$, FDR of SLOPE is equal to:
\begin{equation}
FDR = \sum_{r=1}^{p}\frac{1}{r}\sum_{i \in S^{c}}P\left(|T_i(a)|>\lambda_r,T(a) \in H_r \right)
\label{eq:FDR_T(a)}
\end{equation}
\end{corollary}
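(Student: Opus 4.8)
The plan is to reduce the expectation defining the FDR to a sum of probabilities by first conditioning on the number of rejections $R$ and then invoking the two support characterizations, Theorems \ref{Lem2} and \ref{Lem1}. First I would partition the sample space according to the value of $R\in\{0,1,\ldots,p\}$. On the event $\{R=0\}$ there are no rejections, so $V=0$ and that term drops out; for $r\geqslant 1$ we have $R\vee 1=r$. Starting from the definition (\ref{eq:FDR}) this gives
\begin{equation*}
FDR = \mathbb{E}\left(\frac{V}{R\vee 1}\right) = \sum_{r=1}^{p}\frac{1}{r}\,\mathbb{E}\left(V\,\mathbf{1}_{\{R=r\}}\right).
\end{equation*}
This step is pure bookkeeping; the only point requiring care is that the $R=0$ boundary is handled by the convention $R\vee 1$ together with $V=0$ on that event.

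Next I would express the false-discovery count as a sum of indicators over the null coordinates $i\in S^{c}=\{i:b^{0}_i=0\}$, namely $V=\sum_{i\in S^{c}}\mathbf{1}_{\{\hat b_i\neq 0\}}$. Interchanging the finite sum with the expectation yields
\begin{equation*}
\mathbb{E}\left(V\,\mathbf{1}_{\{R=r\}}\right) = \sum_{i\in S^{c}} P\!\left(\hat b_i\neq 0,\; R=r\right).
\end{equation*}

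Finally I would rewrite each event $\{\hat b_i\neq 0,\,R=r\}$ in terms of the vector $T(a)$, for an arbitrary fixed $a>0$. Theorem \ref{Lem2} identifies $\{R=r\}$ with $\{T(a)\in H_r\}$, and Theorem \ref{Lem1}, applied on the event $\{R=r\}$, gives the equivalence $\{\hat b_i\neq 0\}=\{|T_i(a)|>\lambda_r\}$. Combining these,
\begin{equation*}
\{\hat b_i\neq 0,\,R=r\} = \{\,|T_i(a)|>\lambda_r,\; T(a)\in H_r\,\},
\end{equation*}
and substituting back produces exactly the claimed formula (\ref{eq:FDR_T(a)}). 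Since $a>0$ was arbitrary, the identity holds for every $a>0$.

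The computation is essentially routine once the two support theorems are granted; I do not expect any genuine analytic obstacle. The single thing to be vigilant about is the logical composition of the events: the equivalence furnished by Theorem \ref{Lem1} is valid only \emph{after} conditioning on $\{R=r\}$, so the coordinatewise condition $\{|T_i(a)|>\lambda_r\}$ must be intersected with $\{T(a)\in H_r\}$ (equivalently $\{R=r\}$) rather than applied unconditionally. Keeping that intersection throughout is precisely what guarantees that the two characterizations match up and deliver the stated decomposition.
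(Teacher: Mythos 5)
Your proposal is correct and is essentially identical to the paper's own proof: the paper likewise expands $FDR=\sum_{r=1}^{p}\frac{1}{r}\sum_{i\in S^{c}}P(\hat b_i\neq 0,\,R=r)$ and then cites Theorems \ref{Lem2} and \ref{Lem1} to translate the events into $\{|T_i(a)|>\lambda_r,\,T(a)\in H_r\}$. Your explicit remark that the equivalence from Theorem \ref{Lem1} is only valid on the event $\{R=r\}$, so the intersection with $\{T(a)\in H_r\}$ must be retained, is a point the paper leaves implicit, but there is no substantive difference in the argument.
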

\begin{proof}

Let us denote the support of the true parameter vector $b^0$ by:
\begin{equation}
S = Supp(b^0)
\end{equation}
and a set that is  the complement of $S$ in $\{1,...,p\}$ by:
\begin{equation}
S^c = \{1,...,p\}\setminus S
\label{eq:comp_S}
\end{equation}
Directly from the definition we obtain:
\begin{eqnarray}\label{FDR_basic}
FDR& =& \mathbb{E}\left(\frac{V}{R \vee 1}\right) = \sum_{r=1}^{p}\frac{1}{r}\mathbb{E}(V\mathbbm{1}(R=r))=\sum_{r=1}^{p}\frac{1}{r}\mathbb{E}\left(\sum_{i \in S^{c}}\mathbbm{1}(\hat{b}_i \neq 0) \mathbbm{1}(R=r)\right) \nonumber \\ 
&=&\sum_{r=1}^{p}\frac{1}{r}\sum_{i \in S^{c}}\mathbb{E}(\mathbbm{1}(\hat{b}_i \neq 0) \mathbbm{1}(R=r)) = 
\sum_{r=1}^{p}\frac{1}{r}\sum_{i \in S^{c}}P(\hat{b}_i \neq 0,R=r)
\end{eqnarray}

and Corollary \ref{FDRscore} is a direct consequence of Theorems \ref{Lem2} and \ref{Lem1}. 
\end{proof}

\subsection{Proof of Theorem \ref{main}}
We now focus on the multiple regression model (\ref{model}).  
Elementary calculations show that in this case the vector $U$ (for def. see (\ref{score_v})) takes the form
\[
U(b) = X'(Y-Xb) = X'\epsilon + X'X(b^0-b) 
\]
Let us denote for simplicity 
\[
T = T(1) = \hat{b}+U(\hat{b}) = X'\epsilon + b^0 + (X'X - \mathbb{I})(b^0-\hat{b})
\]
and introduce the following notation for the components of $T$:
\begin{equation}
M = X'\epsilon + b^0
\end{equation}
and
\begin{equation}
\Gamma = (\mathbb{I}-X'X)(\hat{b}-b^0)\;\;.
\end{equation}
Naturally, $T = M + \Gamma$.\\
Due to (\ref{eq:FDR_T(a)}), we can express FDR for linear regression in the following way:
\begin{equation}
FDR = \sum_{r=1}^{p}\frac{1}{r}\sum_{i \in S^{c}}P(T \in H_r,\vert T_i \vert > \lambda_r)\;\;.
\label{FDR_pom1}
\end{equation}

Deeper analysis shows that under the assumptions of Theorem \ref{main}, the FDR expression (\ref{eq:FDR_T(a)}) can be simplified.  Corollary \ref{resolvent}, stated below, follows directly from Lemma 4.4 in \citep{su2016} (see the supplementary materials) and shows that, with a large probability, only the first elements of the summation over $r$  are different from zero. Furthermore,  the following Lemma \ref{Gamma_cond} shows that elements of the vector $\Gamma$ are sufficiently small, so we can focus on the properties of the vector $M$.

\begin{definition}[Resolvent set, \cite{su2016}] 
Fix $S = supp(b^0)$ of cardinality k, and an integer $\tilde k^*$
obeying $k < \tilde k^* < p$. The set $\tilde S^* = \tilde S^*(S, \tilde k^*)$ is said to be a resolvent set if it is the union of S and
the $\tilde k^*-k$ indexes with the largest values of 
$\vert X_i' \epsilon \vert$ among all $i \in \{1,...,p\} \setminus S$.
\end{definition}

Let us introduce the following notation on a sequence of events when the union of supports of  $b^0$ and $\hat b$ is contained in $\tilde S^*$
\begin{equation}
Q_1(n,\tilde k^*) = \{Supp(b^0)\cup Supp(\hat{b})\subseteq \tilde S^*\}
\label{Q_1(n)}
\end{equation}

\begin{corollary}
Suppose the assumptions of Theorem \ref{main} hold.
Then there exists a deterministic sequence $k^*$ such that $k^*/p \rightarrow 0$, $((k^*)^2 \log p)/n \rightarrow 0$  and:
\begin{equation}
P(Q_1(n, k^*)) \rightarrow 1.
\label{b_in_S*}
\end{equation}
\label{resolvent}
\end{corollary}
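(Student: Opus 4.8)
The plan is to derive the corollary directly from Lemma 4.4 of \citep{su2016}, which is exactly the support-localization result for SLOPE under the i.i.d.\ $N(0,1/n)$ Gaussian design. First I would record the trivial half of the containment: by the very definition of the resolvent set $\tilde S^*$ as the union of $S = Supp(b^0)$ with additional indices, we always have $S \subseteq \tilde S^*$, so the event $Q_1(n,\tilde k^*)$ reduces to $\{Supp(\hat b) \subseteq \tilde S^*\}$. Thus the entire content of the corollary is to show that, for a suitable deterministic cardinality $\tilde k^*$, the SLOPE support is trapped inside the resolvent set with probability tending to one.

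Next I would check that the hypotheses of Lemma 4.4 are met under the assumptions of Theorem \ref{main}. The three ingredients that lemma requires are: (i) the Gaussian design with entries $N(0,1/n)$, which is assumed verbatim in Theorem \ref{main}; (ii) a tuning sequence that dominates the Benjamini--Hochberg sequence by a fixed multiplicative factor strictly larger than one --- here this is supplied by the factor $(1+\delta)$ in (\ref{lBH}), which is precisely the inflation that Su and Candès use to force the solution onto the resolvent set; and (iii) the sparsity and growth conditions, which coincide with (\ref{ass1}). Verifying (ii) is the only non-mechanical point: one must confirm that $\lambda_i(q,\delta) = \sigma(1+\delta)\Phi^{-1}(1 - qi/(2p))$ lies in the admissible range of sequences for which the lemma is stated, taking care of the $\sigma$- and $1/n$-scaling conventions so that the two papers are comparing the same quantities.

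Having matched the hypotheses, Lemma 4.4 supplies an integer $\tilde k^*$, comparable in order to $k$, for which $P(Supp(\hat b) \subseteq \tilde S^*) \to 1$; setting $k^* := \tilde k^*$ (rounding up to a deterministic sequence if necessary) gives $P(Q_1(n,k^*)) \to 1$. It then remains to verify the two rate conditions. Since $k^*$ is, up to a constant or lower-order factor, of the same order as $k$, the bound $k/p \to 0$ in (\ref{ass1}) yields $k^*/p \to 0$, and the bound $k^2 \log p / n \to 0$ yields $(k^*)^2 \log p / n \to 0$. I expect the main obstacle to be purely one of reconciliation: ensuring that the cardinality $\tilde k^*$ produced by the lemma is genuinely deterministic and of order $k$ (so that the rate conditions transfer cleanly), and that the localization holds simultaneously for every SLOPE minimizer rather than absorbing a hidden dependence on the noise realization. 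Once these translations are in place, the corollary follows with no further probabilistic work.
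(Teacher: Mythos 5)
Your overall route is the same as the paper's: the corollary is deduced from Lemma 4.4 of \citet{su2016} (Lemma \ref{resolvent1} in the supplement) by checking that the assumptions of Theorem \ref{main} imply its hypotheses and then exhibiting an admissible cardinality $k^*$ that also satisfies the two rate conditions. Your preliminary observations (that $S \subseteq \tilde S^*$ holds automatically by the definition of the resolvent set, and that the scaling/tuning conventions must be reconciled) are correct and consistent with what the paper does.

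There is, however, one step that does not go through as you state it. Lemma 4.4 does not \emph{supply} a $\tilde k^*$ ``comparable in order to $k$''; it holds for \emph{every} deterministic $k^*$ obeying the constraint (\ref{cond:k}), namely $k^* \geqslant \max\left(\frac{1+c}{1-q}k,\; k+d\right)$, where $d$ is a deterministic sequence \emph{diverging to infinity}. Because $d$ must diverge, in the regime where $k$ stays bounded (which is permitted by (\ref{ass1}) and is explicitly discussed in the paper's remarks) every admissible $k^*$ must itself diverge, so $k^*$ is \emph{not} of the same order as $k$, and your claim that $k^2\log p/n \rightarrow 0$ yields $(k^*)^2\log p/n \rightarrow 0$ by order comparison fails there. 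The missing idea --- which is essentially the only content of the corollary beyond quoting the lemma --- is the explicit choice of a \emph{slowly} diverging $d$. Since (\ref{ass1}) gives $\log p/n \rightarrow 0$ (for $k \geqslant 1$) and $p \rightarrow \infty$, one may take for instance $d = \min\left\{ (n/\log p)^{1/4},\, \sqrt{p} \right\}$ and $k^* = \left\lceil \max\left(\frac{1+c}{1-q}k,\; k+d\right)\right\rceil$. Then $d \rightarrow \infty$, the constraint (\ref{cond:k}) holds, and
\[
\frac{(k^*)^2 \log p}{n} \;\lesssim\; \frac{k^2\log p}{n} + \frac{d^2 \log p}{n} \;\longrightarrow\; 0,
\qquad
\frac{k^*}{p} \;\lesssim\; \frac{k}{p} + \frac{d}{p} \;\longrightarrow\; 0,
\]
so Lemma 4.4 applies to this $k^*$ and the corollary follows. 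This is exactly the (terse) content of the paper's proof, which correctly phrases the step as ``we can choose a sequence $d$ and $k^*$'' rather than as a rate transfer from $k$ to $k^*$.
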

Corollary \ref{resolvent} follows directly from Lemma 4.4 in \cite{su2016} (see Lemma S.2.2 in the supplementary materials and the discussion below). From now on $k^*$ will denote the sequence satisfying  Corollary \ref{resolvent}.

\begin{lemma}
\label{Gamma_cond}
Let us denote by $Q_2(n, \tilde{\gamma}(n))$ a sequence of events when the $l_{\infty}$ norm of vector $\Gamma$ is smaller than $\tilde \gamma(n)$:
\begin{equation}
 Q_2(n,\tilde \gamma (n)) = \{\max_i\vert \Gamma_i \vert \leqslant \tilde\gamma(n) \}\;\;.
 \end{equation} 
If the assumptions of Theorem \ref{main} hold then there exists a constant $C_q$, dependent only on $q$, such that the 
sequence $\gamma(n) = C_q \sqrt{\frac{(k^*)^2 \log p}{n}} \lambda^{BH}_{k^*}$ satisfies:
\begin{equation}
P\left(Q_2(n,\gamma(n)) \right) \rightarrow 1\;\;.
\label{Gamma_0}
\end{equation}
\end{lemma}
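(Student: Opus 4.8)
The plan is to run the whole argument on the high-probability event $Q_1(n,k^*)$ of Corollary \ref{resolvent}, on which $h := \hat{b}-b^0$ is supported on the resolvent set $\tilde S^*$ with $|\tilde S^*| = k^*$. Writing $A := \mathbb{I}-X'X$, on this event every coordinate satisfies $\Gamma_i = \sum_{j \in \tilde S^*} A_{ij} h_j$, so a coordinatewise Cauchy--Schwarz inequality gives the deterministic bound
\[
\max_i |\Gamma_i| \le \Big(\max_i \|A_{i,\tilde S^*}\|_2\Big)\,\|h\|_2 \le \sqrt{k^*}\,\Big(\max_{i,j}|A_{ij}|\Big)\,\|h\|_2 .
\]
The estimate thus splits into two essentially independent pieces: a uniform control of the entries of $A$, and a bound on the estimation error $\|h\|_2$.

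For the entries, the off-diagonal term $A_{ij} = -X_i'X_j$ is, conditionally on $X_j$, Gaussian with variance $\|X_j\|_2^2/n \approx 1/n$ (hence sub-exponential unconditionally), while the diagonal term $A_{ii} = 1-\|X_i\|_2^2$ concentrates around $0$ at scale $1/\sqrt{n}$ via a $\chi^2$ tail bound. A union bound over the $O(p^2)$ pairs then furnishes a constant $c$ with $P\big(\max_{i,j}|A_{ij}| \le c\sqrt{\log p / n}\big) \to 1$. I would emphasise that this is a statement about $X$ alone; passing to the global maximum over all entries \emph{before} restricting to $\tilde S^*$ is exactly what lets us avoid union-bounding over the random subset $\tilde S^*$.

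For the estimation error I would invoke the SLOPE $\ell_2$-bound of \cite{su2016}: for a $k$-sparse signal with the present tuning sequence (which is a constant multiple of $\lambda^{BH}$), with probability tending to one $\|h\|_2^2 \lesssim \sigma^2 k \log(p/k) \le \sigma^2 k^* \log p \lesssim k^*(\lambda^{BH}_{k^*})^2$, where the last step uses $k^*/p \to 0$ so that $\lambda^{BH}_1 \asymp \lambda^{BH}_{k^*}$; hence $\|h\|_2 \le c'\sqrt{k^*}\,\lambda^{BH}_{k^*}$. Intersecting the three high-probability events---$Q_1$, the entry bound, and the error bound---and substituting into the display yields
\[
\max_i |\Gamma_i| \le c c'\, k^* \sqrt{\frac{\log p}{n}}\,\lambda^{BH}_{k^*} = C_q \sqrt{\frac{(k^*)^2 \log p}{n}}\,\lambda^{BH}_{k^*},
\]
which is precisely $\gamma(n)$ with $C_q = cc'$; since each event has probability tending to $1$, so does their intersection, giving \eqref{Gamma_0}.

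The main obstacle is the statistical dependence between $A$, $\tilde S^*$ and $h$: all three are functions of the same design $X$, so neither the restriction to $\tilde S^*$ nor the error vector $h$ can be treated as independent of the Gram matrix. The proof circumvents this by converting each random object into a deterministic bound valid on a high-probability event---a uniform-over-all-coordinates entry bound for $A$, the support localisation from $Q_1$, and the norm bound for $h$---after which the final estimate is a purely algebraic consequence of Cauchy--Schwarz on the intersection of these events. A secondary point to check is that the $q$- and $\delta$-dependence of the tuning sequence enters only through the constants $c,c'$, so that the resulting $C_q$ depends on $q$ alone, as claimed.
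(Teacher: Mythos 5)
Your overall strategy is sound and genuinely different from the paper's. The paper splits the coordinates of $\Gamma$ into those inside and outside the resolvent set $S^*$: the off-set block $X_{(S^*)^c}'X_{S^*}(\hat b_{S^*}-b^0_{S^*})$ is controlled by Lemma A.12 of \cite{su2016} (a delicate result precisely because $\hat b$ depends on the whole design), while the on-set block is handled through the SVD of $X_{S^*}$ and the singular-value bounds of Lemma A.11, combined with the $\ell_2$ error bound (\ref{b_bhat}). You replace both of these design-matrix lemmas by a single elementary device: a union bound over all $O(p^2)$ entries of $\mathbb{I}-X'X$ followed by coordinatewise Cauchy--Schwarz, keeping only (\ref{b_bhat}) as external input. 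This buys self-containedness and neatly sidesteps the dependence between $\hat b$, $S^*$ and the Gram matrix, at the price of a looser row bound ($\sqrt{k^*\log p/n}$ instead of the spectral bound $\sqrt{k^*\log(p/k^*)/n}$) --- a loss that the target $\gamma(n)$, which itself carries a $\sqrt{\log p}$ factor, can absorb.

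However, one step as written is false. You justify $\sigma^2 k^*\log p \lesssim k^*(\lambda^{BH}_{k^*})^2$ by claiming that $k^*/p\to 0$ forces $\lambda^{BH}_1\asymp\lambda^{BH}_{k^*}$. This implication does not hold: it requires $\log p = O(\log(p/k^*))$, i.e.\ $k^*\leqslant p^{1-c}$ for some fixed $c>0$, which is not implied by $k^*/p\to 0$. The assumptions of Theorem \ref{main} admit regimes such as $p\asymp\log n$, $k=\lfloor p/\log p\rfloor$, where Corollary \ref{resolvent} produces $k^*\asymp k$; then
\begin{equation*}
\lambda^{BH}_1\asymp\sigma\sqrt{2\log p},\qquad \lambda^{BH}_{k^*}\asymp\sigma\sqrt{2\log(p/k^*)}\asymp\sigma\sqrt{2\log\log p},
\end{equation*}
and the ratio diverges, so your chain $\|h\|_2^2\lesssim \sigma^2 k^*\log p\lesssim k^*(\lambda^{BH}_{k^*})^2$ breaks at the last link. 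The conclusion you need, $\|h\|_2\lesssim\sqrt{k^*}\,\lambda^{BH}_{k^*}$, is nevertheless true; the correct route is to avoid the lossy intermediate bound $k^*\log p$ altogether and use monotonicity of $x\mapsto x\log(p/x)$ on $(0,p/e)$: since $k\leqslant k^*\leqslant p/e$ eventually,
\begin{equation*}
\|h\|_2^2\lesssim \sigma^2 k\log(p/k)\leqslant \sigma^2 k^*\log(p/k^*)\lesssim k^*\bigl(\lambda^{BH}_{k^*}\bigr)^2 .
\end{equation*}
With this one-line repair the rest of your argument goes through verbatim and yields $\gamma(n)$ with a constant depending only on $q$. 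Note that the paper's own computation (\ref{XX_I3}) is structured exactly so as to dodge this trap: there the factor $\lambda^{BH}_{k^*}$ absorbs the $\sqrt{\log(p/k^*)}$ coming from the singular-value bound, never a $\sqrt{\log p}$.
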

The proof of Lemma \ref{Gamma_cond} is provided in the Appendix.

Let us denote by $Q_3(n,u)$ a sequence of events such that the  $l_2$ norm of the vector $\epsilon$ divided by $\sigma \sqrt{n}$ is smaller than $1 + 1/u$:
\begin{equation}
Q_3(n,u) = \Big\{ \frac{\Vert \epsilon \Vert_2}{\sigma\sqrt{n}} \leqslant 1+1/u \Big\}
\label{Q3}
\end{equation} 

The following Corollary \ref{epsilon} is a consequence of the well known results on the concentration of the Gaussian measure (see Theorem S.2.4 in the supplementary materials).

\begin{corollary}
Let $k^*=k^*(n)$ be the sequence satisfying Corollary \ref{resolvent}. Then
\begin{equation}
P\left(Q_3(n,k^*) \right)\geqslant 1 - e^{- \frac{n}{2(k^*)^2}} \rightarrow 1
\label{Bor}
\end{equation}
\label{epsilon}
\end{corollary}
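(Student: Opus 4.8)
The plan is to recognize $Q_3(n,k^*)^c$ as an upper-tail event for the Euclidean norm of a standard Gaussian vector and to control it by the concentration of Gaussian measure quoted as Theorem S.2.4. First I would write $\epsilon = \sigma Z$ with $Z \sim N(0, I_n)$, so that $\Vert \epsilon \Vert_2 / (\sigma\sqrt{n}) = \Vert Z \Vert_2 / \sqrt{n}$ and the complement of $Q_3(n,k^*)$ is exactly the event $\{\Vert Z \Vert_2 > \sqrt{n}(1 + 1/k^*)\} = \{\Vert Z \Vert_2 > \sqrt{n} + \sqrt{n}/k^*\}$.

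Next I would exploit that the map $z \mapsto \Vert z \Vert_2$ is $1$-Lipschitz on $\mathbb{R}^n$. The concentration of Gaussian measure then gives, for every $t>0$,
\[
P\left(\Vert Z \Vert_2 - \mathbb{E}\Vert Z \Vert_2 \geqslant t\right) \leqslant e^{-t^2/2}.
\]
Combining this with Jensen's inequality $\mathbb{E}\Vert Z \Vert_2 \leqslant (\mathbb{E}\Vert Z \Vert_2^2)^{1/2} = \sqrt{n}$ — which points in the favourable direction, since it places the mean below $\sqrt{n}$ — I obtain $P(\Vert Z \Vert_2 \geqslant \sqrt{n} + t) \leqslant e^{-t^2/2}$.

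Setting $t = \sqrt{n}/k^*$ then yields $P(Q_3(n,k^*)^c) \leqslant e^{-n/(2(k^*)^2)}$, which is precisely the claimed bound $P(Q_3(n,k^*)) \geqslant 1 - e^{-n/(2(k^*)^2)}$. Finally, convergence to $1$ follows from Corollary \ref{resolvent}: since $(k^*)^2 \log p / n \to 0$ we have $n/(k^*)^2 \to \infty$ (indeed $n/(k^*)^2 \geqslant \log p$ for $n$ large, so the exponent dominates $\log p \to \infty$), and hence $e^{-n/(2(k^*)^2)} \to 0$.

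The computation is routine and I do not anticipate a genuine obstacle; the only point needing care is that the concentration inequality bounds deviations from the mean $\mathbb{E}\Vert Z \Vert_2$ rather than from $\sqrt{n}$, so I would make explicit that Jensen's inequality controls this gap in the correct direction. An alternative route, bypassing Lipschitz concentration entirely, would invoke the standard $\chi^2_n$ tail bound for $\Vert Z \Vert_2^2$, but the Gaussian-concentration argument is cleaner and matches the cited Theorem S.2.4.
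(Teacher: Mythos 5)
Your proof is correct and follows essentially the same route as the paper: the paper derives Corollary \ref{epsilon} directly from the Gaussian concentration result quoted as Theorem S.2.4 (the $1$-Lipschitz map $z \mapsto \Vert z\Vert_2$, the bound $\mathbb{E}\Vert Z\Vert_2 \leqslant \sqrt{n}$, and the choice $t = \sqrt{n}/k^*$), with the limit $e^{-n/(2(k^*)^2)} \to 0$ following from $(k^*)^2 \log p / n \to 0$ exactly as you argue. Your explicit handling of the gap between $\mathbb{E}\Vert Z\Vert_2$ and $\sqrt{n}$ via Jensen's inequality is precisely the detail the paper leaves implicit.
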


From now on, for simplicity, we shall denote by  $Q_1, Q_2$ and $Q_3$ the sequences $Q_1(n,k^*), Q_2(n,\gamma)$ and $Q_3(n,k^*)$ respectively. Moreover, let us introduce the following notation on the intersection of $Q_1, Q_2$ and $Q_3$:
\begin{equation}
Q = Q(n) = Q_1(n,k^*) \cap Q_2(n,\gamma) \cap Q_3(n,k^*)
\label{Q}
\end{equation} 

By using an event $Q$, we can bound FDR in the following way:
 
\[
FDR = \mathbb{E}\left(\frac{V}{R\vee 1}\mathbbm{1}( Q^c)\right) + \mathbb{E}\left(\frac{V}{R\vee 1}\mathbbm{1}( Q)\right)   
 \leqslant \mathbb{E}\left(\mathbbm{1}( Q^c)\right) + \mathbb{E}\left(\frac{V}{R\vee 1}\mathbbm{1}( Q)\right) = \] 
\begin{equation}
= P(Q^c) + 
\sum_{r=1}^{p}\frac{1}{r}\sum_{i \in S^{c}}
P(R=r,\hat{b}_i \neq 0,Q)
\label{FDR_pom11}
\end{equation}
The first equality follows from the fact that $\mathbbm{1}( Q) + \mathbbm{1}(Q^c) = 1$. The inequality uses the fact that $\frac{V}{R\vee 1} \leqslant 1$. The second equality is a consequence of the formula (\ref{FDR_basic}) applied to the second term.
Naturally, due to conditions (\ref{b_in_S*}), (\ref{Gamma_0}) and (\ref{Bor}), we obtain  $P(Q^c)\rightarrow 0$. Therefore, we can focus on the properties of the second term in (\ref{FDR_pom11}).  Notice that $Q_1$ implies that $ R\leqslant k^*$ ($supp(\hat b) \subset S^*$), therefore we can limit the summation over $r$ to the first $k^*$ elements:
\begin{equation}
\sum_{r=1}^{p}\frac{1}{r}\sum_{i \in S^{c}}
P(R=r,\hat{b}_i \neq 0,Q)= \sum_{r=1}^{k^*}\frac{1}{r}\sum_{i \in S^{c}}P(R=r,\hat{b}_i \neq 0,Q)\;\;.
\label{FDR_pomoc1}
\end{equation}
Furthermore, according to Theorems \ref{Lem2} and \ref{Lem1}:
\begin{equation}
\sum_{r=1}^{k^*}\frac{1}{r}\sum_{i \in S^{c}}
P(R=r,\hat{b}_i \neq 0,Q)= \sum_{r=1}^{k^*}\frac{1}{r}\sum_{i \in S^{c}}P(T \in H_r,\vert T_i \vert> \lambda_r,Q)\;\;.
\label{FDR_pomoc2}
\end{equation}

We now introduce the useful notation:
\begin{itemize}
\item a vector $M^{(i)} = (M^{(i)}_1,...,M^{(i)}_p)'$ is the following modification of $M$ 
\[
M^{(i)}_j:=
\Bigg\lbrace\begin{array}{l}
M_j  \;\;\; i\neq j \\
\infty   \;\;\; i=j\end{array}
\]
\item a set $H_r^{\gamma}$, which is a generalization of the set $H_r$ (\ref{hr}),
\[H_r^{\gamma} = \{
w \in \mathbb{R}^p: \forall_{j \leqslant r} \sum_{i=j}^r (\lambda_i- \gamma) < \sum_{i=j}^r \vert w \vert_{(i)} \;\;\; and \;\;\; \forall_{j \geqslant r+1} \sum_{i=r+1}^j(\lambda_i+ \gamma) \geqslant \sum_{i=r+1}^j \vert w\vert_{(i)}
\}\;\;.\]
\end{itemize}
Lemma \ref{T_to_M} (see the Appendix for the proof) allows for the replacement of an event $\{T \in H_r,\vert T_i \vert> \lambda_r,Q_2\}$ by an event which depends only on $M^{(i)}$.
\begin{lemma}
\label{T_to_M}
If $T \in H_r$, $\vert T_i \vert> \lambda_r$ and $Q_2$ occurs, then $M^{(i)} \in H_r^{\gamma}$.
\end{lemma}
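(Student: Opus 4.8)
\section*{Proof plan for Lemma \ref{T_to_M}}

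The plan is to transfer each of the two families of inequalities defining membership in $H_r^{\gamma}$ from the corresponding inequalities defining $T \in H_r$, using two elementary facts about order statistics. The first is a \emph{perturbation bound}: if two vectors satisfy $\bigl| |u_s| - |v_s| \bigr| \le \gamma$ coordinatewise, then $\bigl| |u|_{(\ell)} - |v|_{(\ell)} \bigr| \le \gamma$ for every rank $\ell$, which follows from the representation $|u|_{(\ell)} = \max_{|A|=\ell}\min_{s \in A}|u_s|$. The second is the \emph{interlacing} of order statistics under deletion of one coordinate: writing $M_{-i}$ for $M$ with the $i$-th coordinate deleted, one has $|M|_{(\ell+1)} \le |M_{-i}|_{(\ell)} \le |M|_{(\ell)}$, and likewise for $T$. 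Since $M^{(i)}$ places $+\infty$ in coordinate $i$, that coordinate always occupies rank $1$, so $|M^{(i)}|_{(\ell)} = |M_{-i}|_{(\ell-1)}$ for every $\ell \ge 2$; this is what lets me reduce all the finite inequalities to the reduced vector $M_{-i}$, which on $Q_2$ differs from $T_{-i}$ coordinatewise by at most $\gamma$.

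First I would establish the lower inequalities of $H_r^{\gamma}$, i.e. $\sum_{\ell=j}^r(\lambda_\ell-\gamma) < \sum_{\ell=j}^r |M^{(i)}|_{(\ell)}$ for $j \le r$. For $j=1$ the right-hand sum contains $|M^{(i)}|_{(1)} = +\infty$, so the inequality is immediate. For $2 \le j \le r$ I rewrite $\sum_{\ell=j}^r |M^{(i)}|_{(\ell)} = \sum_{m=j-1}^{r-1}|M_{-i}|_{(m)}$, apply the perturbation bound to pass to $T_{-i}$ at a cost of $(r-j+1)\gamma$, and then use the generic interlacing $|T_{-i}|_{(m)} \ge |T|_{(m+1)}$ to reach $\sum_{\ell=j}^r |T|_{(\ell)}$, which exceeds $\sum_{\ell=j}^r \lambda_\ell$ by the first defining inequality of $T \in H_r$. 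This direction uses nothing special about the index $i$.

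The upper inequalities, $\sum_{\ell=r+1}^j |M^{(i)}|_{(\ell)} \le \sum_{\ell=r+1}^j(\lambda_\ell+\gamma)$ for $j \ge r+1$, are where the hypothesis $|T_i| > \lambda_r$ enters, and this is the crux of the argument. Here the generic interlacing $|T_{-i}|_{(m)} \le |T|_{(m)}$ is too lossy, since it shifts the sum toward the larger order statistics. The key observation is that $i$ must lie among the $r$ largest coordinates of $|T|$: the $j=r+1$ instance of the second defining inequality of $T \in H_r$ gives $|T|_{(r+1)} \le \lambda_{r+1} \le \lambda_r < |T_i|$, so $i$ cannot sit at rank $r+1$ or below. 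Consequently deleting coordinate $i$ yields the \emph{exact} shift $|T_{-i}|_{(m)} = |T|_{(m+1)}$ for every $m \ge r$, the strictness $|T_i| > |T|_{(r+1)}$ making this robust to ties. Rewriting $\sum_{\ell=r+1}^j |M^{(i)}|_{(\ell)} = \sum_{m=r}^{j-1}|M_{-i}|_{(m)}$, passing to $T_{-i}$ at a cost of $(j-r)\gamma$, applying the exact shift, and finally invoking the second defining inequality of $T\in H_r$ at index $j$ delivers the claim.

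The main obstacle is precisely this upper-bound direction: one must recognize that the naive interlacing inequality discards too much and that $|T_i|>\lambda_r$ is exactly what promotes $|T_{-i}|_{(m)} \le |T|_{(m)}$ to the exact identity $|T_{-i}|_{(m)} = |T|_{(m+1)}$ on the relevant range of ranks. When $r=p$ these upper inequalities are vacuous and there is nothing to prove. The remaining ingredients—the perturbation lemma, the two interlacing inequalities, and the bookkeeping of the $\gamma$-terms, whose $(r-j+1)$ and $(j-r)$ multiplicities match the inflation built into $H_r^{\gamma}$—are routine.
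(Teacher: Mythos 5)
Your proposal is correct and takes essentially the same route as the paper's proof: the paper first shows $T^{(i)} \in H_r$ (resting on exactly your key observation that $|T_i| > \lambda_r \geqslant \lambda_{r+1} \geqslant |T|_{(r+1)}$ forces coordinate $i$ into the top $r$ ranks, so that the order statistics beyond rank $r$ are unchanged while those up to rank $r$ can only increase) and then transfers to $M^{(i)} \in H_r^{\gamma}$ via the same coordinatewise-$\gamma$ perturbation bound on sorted magnitudes (its Proposition on $A=B+C$). Your use of the deleted vector $T_{-i}$ in place of the $\infty$-modified vector $T^{(i)}$, and your reversed order of applying the perturbation and the rank-shift, are only cosmetic differences in bookkeeping.
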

Lemma \ref{T_to_M}, together with a fact that under $Q_2$ $\vert T_i \vert> \lambda_r$,
allows for the conclusion that $\vert M_i \vert> \lambda_r - \gamma$ 
and therefore
\begin{equation}
\sum_{r=1}^{k^*}\frac{1}{r}\sum_{i \in S^{c}}P(T \in H_r,\vert T_i \vert> \lambda_r,Q) \leqslant  
\sum_{r=1}^{k^*}\frac{1}{r}\sum_{i \in S^{c}}P\left(M^{(i)} \in H_r^{\gamma},\vert M_i \vert> \lambda_r-\gamma,Q_3\right) 
\label{FDR_M}
\end{equation}
The following Lemma \ref{Conv} (see the Appendix for the proof ) provides asymptotic behavior of the right-hand side of (\ref{FDR_M}):

\begin{lemma}
\label{Conv}
Under the assumptions of Theorem \ref{main}, it holds:
\begin{equation}
\sum_{r=1}^{k^*}\frac{1}{r}\sum_{i \in S^{c}}P\left(M^{(i)} \in H_r^{\gamma},\vert M_i \vert> \lambda_r-\gamma,Q_3\right) 
\rightarrow 0
\end{equation}
\end{lemma}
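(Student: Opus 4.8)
The plan is to condition on the noise vector $\epsilon$ and to exploit the Gaussianity of the design. Fix $i \in S^c$, so $b^0_i = 0$ and $M_i = X_i'\epsilon$. Conditionally on $\epsilon$ the entry $M_i = X_i'\epsilon$ is $N\!\left(0, \Vert \epsilon \Vert_2^2/n\right)$ and, since distinct columns of $X$ are independent, it is independent of all the other entries $(M_l)_{l \neq i}$. As $M^{(i)}$ is a function of $\{M_l : l \neq i\}$ only and the event $Q_3$ is measurable with respect to $\epsilon$ (see (\ref{Q3})), this yields the factorisation
\[
P\!\left(M^{(i)} \in H_r^{\gamma},\, \vert M_i \vert > \lambda_r - \gamma \,\middle|\, \epsilon\right) = P\!\left(M^{(i)} \in H_r^{\gamma} \,\middle|\, \epsilon\right)\, p_r(\epsilon), \qquad p_r(\epsilon) := P\!\left(\vert M_i \vert > \lambda_r - \gamma \,\middle|\, \epsilon\right).
\]
On $Q_3$ we have $\Vert \epsilon \Vert_2^2/n \leqslant \sigma^2(1+1/k^*)^2$, hence $p_r(\epsilon) \leqslant 2\big(1 - \Phi\big(\tfrac{\lambda_r - \gamma}{\sigma(1+1/k^*)}\big)\big)$, a deterministic bound.

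Next I would turn this into a tail estimate that isolates the gain coming from the factor $1+\delta$ in (\ref{lBH}). With $z_r := \Phi^{-1}(1 - qr/2p)$, so that $1 - \Phi(z_r) = qr/2p$ and $z_r^2 \sim 2\log(2p/qr)$, comparing $1 - \Phi((1+\delta)z_r)$ with $1 - \Phi(z_r)$ through the elementary bounds $(\tfrac1z - \tfrac1{z^3})\phi(z) \leqslant 1 - \Phi(z) \leqslant \tfrac1z\phi(z)$ gives, uniformly over $r \leqslant k^*$ and for $n$ large,
\[
p_r(\epsilon) \,\lesssim\, \frac{qr}{p}\left(\frac{qr}{2p}\right)^{\delta^2 + 2\delta}.
\]
The correction $\gamma$ and the variance inflation $1/k^*$ are absorbed into the implied constant: indeed $1/k^* \to 0$, and $\gamma/\lambda_r \to 0$ because $(k^*)^2\log p/n \to 0$ in (\ref{ass1}) forces $\gamma$ to be of smaller order than $\lambda_r$. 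The decisive feature is the extra factor $(qr/2p)^{\delta^2+2\delta} \to 0$ produced by $(1+\delta)$.

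The crux is to perform the summation over $r$ without overcounting. A naive bound $P(M^{(i)} \in H_r^{\gamma}\mid\epsilon) \leqslant 1$ summed term by term is fatal: it produces $\sum_{r \leqslant k^*} \tfrac{q}{1+\delta}(qr/2p)^{\delta^2+2\delta}$, of order $k^*(k^*/p)^{\delta^2+2\delta}$, which need not vanish under (\ref{ass1}) alone. Instead I would use that, by Theorem \ref{Lem2}, the sets $H_r$ partition $\mathbb{R}^p$, so $M^{(i)}$ lies in exactly one $H_r$; the relaxation to $H_r^{\gamma}$ enlarges each set, but — since $\gamma$ is of smaller order than the spacing of the thresholds — only over a short band of indices $r$ around $R(M^{(i)})$. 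Moreover the strong-signal assumption (\ref{ampl}) guarantees that with probability tending to $1$ every signal entry $M_j$, $j \in S$, exceeds $\lambda_1 + \gamma$, so that $M^{(i)}$ has at least $k+1$ entries (the $k$ signals plus the forced entry $M^{(i)}_i = \infty$) above every threshold $\lambda_r + \gamma$ with $r \leqslant k$; the second family of constraints defining $H_r^{\gamma}$ is then violated, so $M^{(i)} \in H_r^{\gamma}$ forces $r \geqslant k$.

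Combining these facts, the inner $\tfrac1r$-weighted sum over $r$ is supported on $r \gtrsim k$ and, by the partition/short-band property, is bounded by a constant multiple of $\tfrac1k\,\mathbbm{1}\!\left(\vert M_i\vert > \lambda_{R}-\gamma\right)$ with $R = R(M^{(i)})$ of order $k$ (its a-priori bound $R \leqslant k^*$ comes from Corollary \ref{resolvent}, and only $\lesssim\!k$ null coordinates are retained). Replacing $\lambda_R$ by $\lambda_{Ck}$ and taking expectations then yields
\[
\sum_{r=1}^{k^*}\frac1r\sum_{i\in S^c}P\!\left(M^{(i)}\in H_r^{\gamma},\,\vert M_i\vert>\lambda_r-\gamma,\,Q_3\right) \;\lesssim\; \frac{p}{k}\cdot\frac{qk}{p}\left(\frac{qk}{2p}\right)^{\delta^2+2\delta} \;\lesssim\; \left(\frac{qk}{2p}\right)^{\delta^2+2\delta} \longrightarrow 0,
\]
since $k/p \to 0$ and $\delta > 0$. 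The main obstacle is exactly this control of the $\tfrac1r$-weighting: one must rigorously bound the overlap of the relaxed sets $H_r^{\gamma}$ across $r$ and show that the strong signals lift the effective number of rejections $R$ to order $k$, so that the weight $\tfrac1r$ may be replaced by $\tfrac1k$ instead of being summed against the per-$r$ tail probabilities.
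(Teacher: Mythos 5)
Your first two steps coincide with the paper's proof: the factorisation via conditional independence of $M_i$ and $M^{(i)}$ given $\epsilon$ (with $Q_3$ being $\epsilon$-measurable), and the Gaussian tail bound that extracts a factor $(qr/p)^{1+\text{positive power of }\delta}$ from the null coordinate. Your observation that the term-by-term bound $P(M^{(i)}\in H_r^{\gamma}\mid\epsilon)\leqslant 1$ is fatal is also exactly right. The gap is in how you then control the sum over $r$. The paper pulls out the uniform bound $\tfrac1r(qr/p)^{(1+\delta/2)^2}\leqslant \tfrac qp (k^*/p)^{\delta}$ and reduces everything to the single claim that $\sum_{r=1}^{k^*}P(M^{(1)}\in H_r^{\gamma},Q_3)$ is \emph{bounded}, which it proves by explicit order-statistics estimates: for $r\leqslant k$ (relation (\ref{Part1})) the signal-strength assumption (\ref{ampl}) plus a binomial argument gives a bound of the form $\sum_r r\binom{k}{r}p^{-(1+\delta/4)^2 r}\rightarrow 0$, and for $r\geqslant k+2$ (relation (\ref{Part2})) a Beta/Erlang representation of the uniform order statistics of the null coordinates plus Chebyshev's inequality gives convergence to $0$. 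Your proposal replaces this with two geometric localisation claims — that $H_r^{\gamma}$-membership is confined to a ``short band'' of indices around $R(M^{(i)})$, and that $k\lesssim R(M^{(i)})\lesssim k$ — neither of which is proved.

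Moreover, the stated justification for the short band is quantitatively false. The spacing of consecutive thresholds satisfies $\lambda_r-\lambda_{r+1}\asymp 1/\bigl(r\sqrt{\log(p/r)}\bigr)$, while $\gamma\asymp\sqrt{(k^*)^2\log p/n}\,\sqrt{\log(p/k^*)}$; under (\ref{ass1}) alone the ratio $\gamma/(\lambda_{k^*}-\lambda_{k^*+1})\asymp k^*\log(p/k^*)\sqrt{(k^*)^2\log p/n}$ can diverge (e.g.\ when $(k^*)^2\log p/n \asymp (k^*)^{-2}$), so $\gamma$ typically dwarfs the spacing and the sets $H_r^{\gamma}$ can overlap over many indices. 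Separately, your two localisation events (all signals exceeding $\lambda_1+\gamma$, hence $r\geqslant k$; and only $O(k)$ nulls retained, hence $r\leqslant Ck$) hold only with probability tending to one, and you never bound the contribution of their complements. Since that contribution is summed over the $p-k$ null coordinates and the $k^*$ values of $r$, and since — as you note yourself — the trivial bound on these terms does not vanish, one needs quantitative rates for the complement probabilities; supplying precisely those rates is the content of the paper's two order-statistics computations, and is also where (\ref{ampl}) enters in an essential, rate-giving way rather than as a mere high-probability event. A smaller technical slip: the corrections $\gamma$ and $1/k^*$ cannot be absorbed ``into the implied constant'' of the tail bound, because they sit inside $\Phi$ at scale $\sqrt{\log p}$ and under (\ref{ass1}) the product $\gamma\sqrt{\log p}$ need not be bounded; one must instead sacrifice part of the exponent, replacing $1+\delta$ by $1+\delta/2$ as the paper does. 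So the proposal identifies the right difficulty but does not resolve it.
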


The proof of Lemma \ref{Conv} is based on several properties. First,
\[
P\left(M^{(i)} \in H_r^{\gamma},\vert M_i \vert> \lambda_r-\gamma,Q_3\right)\]
can be well approximated by
\[P\left(M^{(i)} \in H_r^{\gamma},Q_3\right) P\left(\vert M_i \vert> \lambda_r-\gamma,Q_3\right) 
\]
This approximation is a consequence of the fact that conditionally on $\epsilon$, $M^{(i)}$ and $M_i$ are 
independent. Second, for $i \in S^c$:
\[
P\left(\vert M_i \vert> \lambda_r-\gamma,Q_3\right)\]
can be well approximated by 
\[ 2(1-\Phi((1+\delta)\lambda_r^{BH})) \approx \left(\frac{rq}{p} \right)^{(1+\delta)^2}\;\;,
\]
where $\Phi(\cdot)$ is the cumulative distribution function of the standard normal distribution. The first approximation is a consequence of the fact that for $i \in S^c$, $M_i= X_i\epsilon$. Thus, conditionally on $\epsilon$, $M_i$ has a normal distribution with the mean equal to 0 and the variance equal to $\frac{||\epsilon||^2}{n}$, which is close to $\sigma^2$ (see Corollary \ref{epsilon}). The second approximation relies on the well known formula
\begin{equation}\label{clas}
1-\Phi(c)=\frac{\phi(c)}{c}(1+o(c))\;\;,
\end{equation}
where $\phi(\cdot)$ is the density of the standard normal distribution and $o(c)$ converges to zero as $c$ diverges to infinity.

Lastly
\[
\sum_{r=1}^{k^*}P\left(M^{(i)} \in H_r^{\gamma}\right) \]
equals approximately to
\[ \sum_{r=1}^{k^*}P\left(M^{(i)} \in H_r\right) \leqslant 1
\]
This approximation is a consequence of the fact that $H_r^\gamma$ does not differ much from $H_r$ and the family of the sets $H_r$ with $r\in\{1,\ldots,p\}$ is disjoint. By applying the above approximations we obtain:
\[
\sum_{r=1}^{k^*}\frac{1}{r}\sum_{i \in S^{c}}P\left(M^{(i)} \in H_r^{\gamma},\vert M_i \vert> \lambda_r-\gamma,Q_3\right) 
\approx 
\sum_{i \in S^{c}}\sum_{r=1}^{k^*}\frac{1}{r}
\left(\frac{rq}{p} \right)^{(1+\delta)^2}P\left(M^{(i)} \in H_r^{\gamma}\right) \leqslant
\]
\[
\left(\frac{q}{p} \right)\left(\frac{k^*q}{p} \right)^{(1+\delta)^2-1}\underbrace{\sum_{i \in S^{c}}\sum_{r=1}^{k^*}
P\left(M^{(i)} \in H_r^{\gamma}\right)}_{\approx p-k} \approx 
\left(\frac{(p-k)q}{p} \right)\left(\frac{k^*q}{p} \right)^{(1+\delta)^2-1} \rightarrow 0
\]

 Lemma \ref{Conv}, together with the fact that under assumptions of Theorem \ref{main} $P(Q^c)\rightarrow 0$, provides $FDR \rightarrow 0$. One can notice that $\left(\frac{k^*q}{p} \right)^{(1+\delta)^2-1}$ is the factor responsible for the convergence of FDR to 0. It remains an open question if in the definition of the $\lambda$ sequence (\ref{lBH}) a constant $\delta$ can be replaced by a sequence converging to 0 at such a rate that the asymptotic FDR is exactly equal  to the nominal level $q$. The proof of this assertion would require a refinement of the bounds provided in \cite{su2016} and we leave this as a topic for future research.

Now, we will argue that under our assumptions the Power of SLOPE converges to 1 . Recall that $TR = \#\{j: b_j^0 \neq 0\;\mbox{and}\;\hat b_j \neq 0\}$ denotes the number of true rejections. Observe that
\[
\Pi = \frac{1}{k}\mathbb{E}\left(TR \right)=
 \frac{1}{k} \sum_{j = 1}^k jP(TR=j) \geqslant
P(TR = k) =  P\left(\bigcap_{i \in S}\{\hat b_i \neq 0 \}\right)
\]
Naturally $\Pi \leqslant 1$, therefore by showing that $P\left(\bigcap_{i \in S}\{\hat b_i \neq 0 \}\right) \rightarrow 1$ we obtain the thesis. 

\begin{lemma}
\label{Pi}
Under the assumptions of Theorem \ref{main}, it holds:
\begin{equation}
P\left(\bigcap_{i \in S}\{\hat b_i \neq 0 \}\right) \rightarrow 1.
\end{equation}
\end{lemma}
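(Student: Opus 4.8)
The plan is to show that, with probability tending to one, every coordinate $i \in S = \mathrm{supp}(b^0)$ satisfies $|T_i| > \lambda_1$, where $T = T(1) = M + \Gamma$ is the vector introduced in the proof of Theorem \ref{main}. Since the tuning sequence is non-increasing, $\lambda_1 \geq \lambda_r$ for every $r$, so on the event $\{R = r\}$ the equivalence in Theorem \ref{Lem1} (applied with $a = 1$) gives $\hat b_i \neq 0$ as soon as $|T_i| > \lambda_r$, which is guaranteed once $|T_i| > \lambda_1$. Hence establishing $|T_i| > \lambda_1$ for all $i \in S$ on a high-probability event, irrespective of the realized value of $R$, yields $P\!\left(\bigcap_{i\in S}\{\hat b_i \neq 0\}\right) \to 1$.

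To control $|T_i|$ from below for $i \in S$, I would use the decomposition $T_i = b^0_i + X_i'\epsilon + \Gamma_i$ (recall $M_i = X_i'\epsilon + b^0_i$) together with the reverse triangle inequality,
\[
|T_i| \geq |b^0_i| - |X_i'\epsilon| - |\Gamma_i|.
\]
The first term is bounded below by the signal-strength assumption (\ref{ampl}): $|b^0_i| \geq 2\sigma(1+\delta)\sqrt{2\log p}$. For the last term, Lemma \ref{Gamma_cond} supplies an event $Q_2$ of probability tending to one on which $\max_i |\Gamma_i| \leq \gamma(n)$, and the assumption $(k^*)^2\log p / n \to 0$ combined with $\lambda^{BH}_{k^*} = O(\sqrt{\log p})$ gives $\gamma(n) = o(\sqrt{\log p})$, so this contribution is asymptotically negligible relative to $\sigma\sqrt{2\log p}$.

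The remaining, and main, step is to bound $\max_{i \in S}|X_i'\epsilon|$. Conditionally on $X$, each $X_i'\epsilon$ is $N(0, \sigma^2\|X_i\|^2)$, and a standard $\chi^2$-concentration argument together with a union bound shows $\max_i \|X_i\|^2 \leq 1 + o(1)$ with high probability; on this event a Gaussian tail bound and a union bound over the $k \leq p$ indices of $S$ give $\max_{i\in S}|X_i'\epsilon| \leq \sigma(1+\eta)\sqrt{2\log p}$ with probability tending to one, for any fixed $\eta > 0$. Choosing $\eta \in (0,\delta)$ and intersecting these high-probability events yields
\[
|T_i| \geq \big(2(1+\delta) - (1+\eta) - o(1)\big)\sigma\sqrt{2\log p} = \big(1 + 2\delta - \eta - o(1)\big)\sigma\sqrt{2\log p}.
\]
Since $\lambda_1 = \sigma(1+\delta)\Phi^{-1}\!\left(1 - q/(2p)\right) = \sigma(1+\delta)\sqrt{2\log p}\,(1+o(1))$ and $1 + 2\delta - \eta > 1 + \delta$, the right-hand side exceeds $\lambda_1$ for all large $n$, which is exactly what is required.

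I expect the main obstacle to be the uniform control of the random normalizing variances $\|X_i\|^2$: the bound on $\max_{i\in S}|X_i'\epsilon|$ must hold simultaneously over all $i \in S$ while accounting for the conditional variances $\sigma^2\|X_i\|^2$ being themselves random. The factor $2$ in assumption (\ref{ampl}) is precisely what makes the argument close, providing a cushion of size $\delta$ that absorbs the maximal noise contribution $\sigma\sqrt{2\log p}$ along with the vanishing $\Gamma$ and $o(1)$ terms; the rest amounts to careful bookkeeping of the asymptotics of $\Phi^{-1}(1-q/(2p))$ and of $\gamma(n)$.
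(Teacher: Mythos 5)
Your proposal is correct, and its skeleton is the same as the paper's: reduce $\{\hat b_i \neq 0\}$ to $\{|T_i| > \lambda_1\}$ via Theorem \ref{Lem1} and the monotonicity of $\lambda$, strip off $\Gamma$ using Lemma \ref{Gamma_cond}, and then control the Gaussian term $X_i'\epsilon$ uniformly over $S$ using the amplitude assumption (\ref{ampl}). Where you diverge is in the execution of that last step, and the difference is worth noting. You bound the noise two-sidedly, $\max_{i\in S}|X_i'\epsilon| \leq \sigma(1+\eta)\sqrt{2\log p}$, conditioning on $X$ so that $X_i'\epsilon \sim N(0,\sigma^2\|X_i\|^2)$, which forces you to add $\chi^2$-concentration for the $k$ random column norms $\|X_i\|^2$ plus a union bound (no independence needed). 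The paper instead exploits the sign symmetry $|X_i'\epsilon + b_i^0| =^D |X_i'\epsilon + |b_i^0||$ to drop the absolute value and reduce to the \emph{one-sided} event $\bigcap_{i\in S}\{X_i'\epsilon > -\sigma(1+\delta/2)\sqrt{2\log p}\}$, and it conditions on $\epsilon$ rather than on $X$: given $\epsilon$, the variables $X_i'\epsilon$ are exactly i.i.d.\ $N(0,\|\epsilon\|_2^2/n)$, the single scalar $\|\epsilon\|_2$ is controlled once by $Q_3$ (Corollary \ref{epsilon}), and the probability is computed as an exact product finished off with Bernoulli's inequality, giving the bound $1 - k p^{-a^2}$ with $a \geq 1$. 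The paper's conditioning direction buys exact normality with a single variance to control instead of $k$ random ones; your direction is more pedestrian but equally valid, since a union bound never needs the joint structure. Both arguments consume the factor $2$ in (\ref{ampl}) the same way — as a cushion of order $\delta\,\sigma\sqrt{2\log p}$ above $\lambda_1$ — and both close comfortably under assumptions (\ref{ampl}) and (\ref{ass1}); the one-sided trick is marginally sharper in principle, but under the stated hypotheses the distinction is immaterial.
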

\begin{proof} The proof of Lemma \ref{Pi} is provided in the Appendix. It is based on the sequence of the following inequalities:
$$P\left(\bigcap_{i \in S}\{\hat b_i \neq 0 \}\right)\geq  P\left(\bigcap_{i \in S}\{|T_i|>\lambda_1\}\right)\geq P\left(\bigcap_{i \in S}\{|M_i|>\lambda_1+\gamma \}\right)$$
$$\geq P\left(\bigcap_{i \in S}\{ X_i' \epsilon> \lambda_1+\gamma-|b_i^0| \}\right)\geq  P\left(\bigcap_{i \in S}\{ X_i' \epsilon> -\sigma\left(1+\frac{\delta}{2}\right)\sqrt{2\log p}\right)\;\;.$$
The result follows by the conditional independence of $X_i'\epsilon$ (given $\epsilon$) and the classical approximation (\ref{clas}).

\end{proof}

\section{Discussion}

In this article we provide new asymptotic results on the model selection properties of SLOPE in the case when the elements of the design matrix $X$ come from the normal distribution. Specifically, we provide conditions on the sparsity and the magnitude of true signals such that FDR of SLOPE based on the sequence $\lambda^{BH}$, corresponding to the thresholds of the Benjamini-Hochberg correction for multiple testing, converges to 0 and the Power converges to 1. We believe these results can be extended to the sub-gaussian design matrices with independent columns, which is the topic of ongoing research. Additionally, the general results on the support of SLOPE  open the way for investigation of the properties of SLOPE under arbitrary convex and differentiable loss functions.

In simulations we compared SLOPE based on the sequence $\lambda^{BH}$ with SLOPE based on the heuristic sequence proposed in \cite{SLOPE2} and with LASSO with the tuning parameter $\lambda$ adjusted to the first value of the tuning sequence for SLOPE. When regressors are independent and the vector of true regression coefficients is relatively sparse then the comparison between SLOPE and LASSO bears similarity to the comparison between the Bonferroni and the Benjamini-Hochberg corrections for multiple testing. When $k=0$ both methods perform similarly and control FDR (which for $k=0$ is equal to the Family Wise Error Rate) close to the nominal level. When $k$ increases, FDR of LASSO converges to zero, which however comes at the prize of a substantial loss of Power for moderately strong signals.  Concerning the two versions of SLOPE, our simulations suggest that the heuristic sequence allows for a more accurate FDR control over a wide range of sparsity values.  We believe the techniques developed in this article form a good foundation for the analysis of the statistical properties of the heuristic version of SLOPE, which we consider an interesting topic for further research.

 Our assumptions on the independence of predictors and the sparsity of the vector of true regression coefficients are  restrictive, which is related to the well known problems with FDR control by  LASSO \citep{SLOPE,FDR_LASSO}. In the case of LASSO, these problems can be solved by using adaptive or reweighted LASSO \citep{Zou, Candes}, which allow for the consistent model selection under much weaker assumptions. In these modifications the values of the tuning parameters corresponding to predictors which are deemed important (based on the values of initial estimates) are reduced, which substantially reduces the bias due to shrinkage and improves model selection accuracy. Recently \cite{Wei} developed the Adaptive Bayesian version of SLOPE (ABSLOPE). According to the results of simulations presented in \citep{Wei}, ABSLOPE controls FDR under a much wider set of scenarios than the regular SLOPE, including examples with strongly correlated predictors. We believe our proof techniques can be extended to derive asymptotic FDR control for ABSLOPE, which we leave as an interesting topic for future research.

\section{Acknowledgment}

We would like to thank the Associate Editor and the Referees for many constructive comments. Also, we would like to thank Damian Brzyski for preparing Figure \ref{balls} and Artur Bogdan for helpful suggestions. The research was funded by 
the grant of the Polish National Center of Science Nr 2016/23/B/ST1/00454.

\section{Appendix}

\begin{proof}\textit{The proof of Lemma \ref{Gamma_cond}}\\

We prove a stronger condition: $P(Q_1 \cap Q_2) \rightarrow 1$.\\

Denote by $X_I,\hat{b}_I,b^0_I$  a submatrix (subvector) of $X ,\hat{b},b^0$ consisting of columns (vector elements) with indexes in a set $I$.

Observe that when $Q_1 = \{Supp(b^0)\cup Supp(\hat{b})\subseteq S^*\}$, we can express $\max_i\vert \Gamma_i \vert$ in the following way:
\[
\max_i\vert \Gamma_i \vert = \max (
[\max_{i}\vert [X_{{(S^*)}^c}'X_{S^*}(\hat{b}_{S^*}-b^0_{S^*})]_{i} \vert ,
\max_{i }\vert [(X_{S^*}'X_{S^*}-\mathbb{I})(\hat{b}_{S^*}-b^0_{S^*})]_{i} \vert )
\]
We show that both elements are bounded by $C_q \sqrt{\frac{(k^*)^2 \log p}{n}} \lambda^{BH}_{k^*}$ with the probability tending to 1.
The bound on the first component is a direct corollary from  Lemma A.12 proved in \citet{su2016}.
\begin{corollary}
Under the assumptions of Theorem \ref{main}, there exists a constant $C_q$ only depending on q such that:
\begin{equation}
\max_{i \in {(S^*)}^c}\vert [X_{{(S^*)}^c}'X_{S^*}(\hat{b}_{S^*}-b^0_{S^*})]_{i} \vert 
\leqslant
C_q \sqrt{\frac{k^* \log p}{n}} \lambda^{BH}_{k^* +1}
\label{eq:L3.2.Sec}
\end{equation}
with the probability tending to 1.
\end{corollary}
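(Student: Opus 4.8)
The plan is to factor the target quantity into an estimation-error term and a cross-correlation term, to control each on an event of probability tending to $1$, and to absorb the statistical dependence between the two factors by passing to a supremum over the column space of $X_{S^*}$. Throughout I work on the event $Q_1$, so that $\mathrm{Supp}(b^0)\cup\mathrm{Supp}(\hat b)\subseteq S^*$ and $\hat b-b^0$ is genuinely supported on $S^*$.

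First I would fix notation: set $w=\hat b_{S^*}-b^0_{S^*}\in\mathbb{R}^{k^*}$ and $v=X_{S^*}w\in\mathbb{R}^n$, so that on $Q_1$ the quantity to be bounded is exactly $\max_{i\in(S^*)^c}|X_i' v|$, and moreover $v=X(\hat b-b^0)$ is the full in-sample prediction residual. Since $v\in\mathrm{col}(X_{S^*})$, the elementary inequality $|X_i' v|\le \|v\|_2\cdot\sup_{u\in\mathrm{col}(X_{S^*}),\,\|u\|_2=1}|X_i' u|$ splits the bound into the magnitude $\|v\|_2$ of the prediction error and a purely geometric cross-correlation factor that no longer refers to the realized $v$.

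Next I would bound each factor. For the prediction error, the SLOPE consistency results of \cite{su2016} (applied with the $(1+\delta)$-scaled $\lambda^{BH}$ sequence and the resolvent set of size $k^*$ from Corollary \ref{resolvent}) give, with probability tending to $1$, $\|v\|_2=\|X_{S^*}w\|_2\lesssim \sigma\sqrt{k^*\log p/n}$, i.e. the minimax rate. For the cross-correlation factor I would condition on $X_{S^*}$: the subspace $\mathrm{col}(X_{S^*})$ is then fixed and, crucially, independent of the off-support columns $X_i$, $i\in(S^*)^c$, whose entries remain i.i.d. $N(0,1/n)$. Covering the unit sphere of this $k^*$-dimensional subspace by an $\varepsilon$-net of cardinality $e^{O(k^*)}$, using that $X_i' u\sim N(0,1/n)$ for each fixed unit $u$, and taking a union bound over the $\le p$ indices $i$ and over the net, yields $\sup_{u}|X_i' u|\lesssim \sqrt{(k^*+\log p)/n}$ uniformly in $i$, again with probability tending to $1$.

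The main obstacle is precisely the statistical dependence between $w$ and the off-support columns: because $\hat b$ is a function of the entire design, $X_i' X_{S^*}w$ cannot be treated as a centered Gaussian conditionally on $X_{S^*}$, so a naive tail bound is unavailable. The role of the supremum in the factorization is exactly to neutralize this — the uniform bound controls $X_i' u$ for \emph{every} direction $u\in\mathrm{col}(X_{S^*})$ at once, hence automatically covers the realized, data-dependent direction $v/\|v\|_2$, and the dependence of $w$ on $X_{(S^*)^c}$ becomes irrelevant. Combining the two factors on the intersection of the two high-probability events gives $\max_{i\in(S^*)^c}|X_i' X_{S^*}w|\lesssim \sigma\sqrt{(k^*+\log p)/n}\cdot\sqrt{k^*\log p/n}$, which under assumption (\ref{ass1}) (so that $(k^*)^2\log p/n\to 0$) is of strictly smaller order than the claimed bound $C_q\sqrt{k^*\log p/n}\,\lambda^{BH}_{k^*+1}$, recalling $\lambda^{BH}_{k^*+1}\asymp\sigma\sqrt{2\log p}$. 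This is the content packaged by Lemma A.12 in \cite{su2016}, whose hypotheses — Gaussian design, the resolvent-set construction, and the sparsity regime (\ref{ass1}) — are met here, so the corollary follows by invoking that lemma with the constant $C_q$ it produces.
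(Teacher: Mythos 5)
Your self-contained argument has a genuine gap, and it occurs in both of its quantitative steps. First, the prediction-error bound you quote is off by a factor of $\sqrt{n}$: under the paper's normalization $X_{ij}\sim N(0,1/n)$, the result of \cite{su2016} cited in the paper (see (\ref{b_bhat})) reads $\Vert \hat b - b^0\Vert_2 \leqslant (1+\delta_1)\sigma\sqrt{2k\log(p/k)}$ \emph{without} any $n^{-1/2}$, and since the singular values of $X_{S^*}$ concentrate near $1$ (Lemma \ref{X_SV}), the correct bound for your $v=X_{S^*}(\hat b_{S^*}-b^0_{S^*})$ is $\Vert v\Vert_2\lesssim \sigma\sqrt{k^*\log p}$, not $\sigma\sqrt{k^*\log p/n}$; you have mixed the $N(0,1/n)$ scaling of the design with the estimation rate that holds under the $N(0,1)$ scaling. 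Second, once this is corrected, the Cauchy--Schwarz factorization itself is too lossy to yield the corollary. Your two factors multiply to $\sigma\sqrt{k^*\log p}\cdot\sqrt{(k^*+\log p)/n}$, while the target is $C_q\sqrt{k^*\log p/n}\,\lambda^{BH}_{k^*+1}\asymp \sigma\sqrt{k^*\log p/n}\cdot\sqrt{2\log(p/k^*)}$; the ratio of the former to the latter is of order $\sqrt{(k^*+\log p)/\log(p/k^*)}$, which diverges whenever $k^*\gg\log p$. That regime is fully allowed by assumption (\ref{ass1}): take, e.g., $k\asymp n^{1/4}$ and $p\asymp n$, so that $k^*\asymp n^{1/4}\gg\log n$. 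The loss comes precisely from passing to the supremum over the whole $k^*$-dimensional column space of $X_{S^*}$: that supremum equals $\Vert P_{S^*}X_i\Vert_2\asymp\sqrt{k^*/n}$, whereas the realized direction $v/\Vert v\Vert_2$ has far smaller correlation with each off-support column than the worst-case direction in that subspace --- a fact encoded in the KKT/optimality structure of the SLOPE solution, which is what the proof of Lemma A.12 in \cite{su2016} actually exploits and which your factorization discards.

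Two further remarks. Your independence claim for the net argument is also imprecise: $S^*$ is the resolvent set, selected through the values $|X_i'\epsilon|$, so conditioning on $X_{S^*}$ and on the identity of $S^*$ biases the off-support columns (they are conditioned to have small correlation with $\epsilon$); this particular issue is favorable in direction and repairable, unlike the main one. Finally, note that the paper's own proof is one line: it invokes Lemma A.12 of \cite{su2016} (Lemma \ref{Gamma_cond_1} in the supplement) with $j=1$, observing that the maximum over $i\in(S^*)^c$ is the largest order statistic and that for $j=1$ the right-hand side reduces to $C_q\sqrt{k^*\log p/n}\,\lambda^{BH}_{k^*+1}$. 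Your closing sentence makes the same invocation, and that alone would constitute a valid proof; but the lemma is not a ``packaging'' of your Cauchy--Schwarz argument --- it is strictly stronger than anything that argument can deliver, so the body of your proposal cannot stand as a self-contained alternative to it.
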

Lemma A.12 of \citet{su2016} and the proof of Corollary \ref{eq:L3.2.Sec} can be found in the supplementary materials (see Lemma S.2.5 and the discussion below).

It remains to be proven that the second component is also bounded by $C_q \sqrt{\frac{(k^*)^2 \log p}{n}} \lambda^{BH}_{k^*}$ with the probability tending to 1. To do this, we use Lemma A.11 proved in \citet{su2016} which provides bounds on the largest and the smallest singular values of $X_{S^*}$ (for details see Lemma S.2.6 in the supplementary materials).

Let $X_{S^*} = G\Sigma V'$
 be the Singular Value Decomposition (SVD) of the matrix $X_{S^*}$, where $G \in \mathbb{M}_{n \times n}$ is a unitary matrix, $\Sigma \in \mathbb{M}_{n \times k^*}$ is a diagonal matrix with non-negative real numbers on the diagonal and $V \in \mathbb{M}_{k^* \times k^*}$ is a unitary matrix. Moreover, let us denote by $\sigma_i, \sigma_{min}$ and $\sigma_{max}$ the i-th, the smallest and the largest singular value.
 
Assume that $u$ is an arbitrary unit vector. Due to the relation between the $l_{\infty}$ and the $l_2$ vector norms, and additionally between the $l_2$ vector norm and the $\Vert \cdot \Vert_2$ matrix norm, it holds:
\[
\max_{i}\vert [(X_{S^*}'X_{S^*}-\mathbb{I})u]_{i} \vert \leqslant \Vert (X_{S^*}'X_{S^*}-\mathbb{I})u \Vert_2
\leqslant \Vert X_{S^*}'X_{S^*}-\mathbb{I} \Vert_2 \;\;.
\]
Using the SVD of the matrix $X_{S^*}$ and the sub-multiplicity of the $\Vert \cdot \Vert_2$ matrix norm we obtain:
\[
=\Vert V(\Sigma'\Sigma-\mathbb{I})V' \Vert_2 \leqslant 
\Vert V \Vert_2 \Vert \Sigma'\Sigma-\mathbb{I} \Vert_2 \Vert V \Vert_2 = \Vert \Sigma'\Sigma-\mathbb{I} \Vert_2 = \sigma_{max}(\Sigma'\Sigma-\mathbb{I})
\]
By definition the maximal singular value of the matrix $A$ equals to the square root of the largest eigenvalue of the positive-semidefinite matrix  $A'A$. Therefore in our case it is obvious that:
\[
\sigma_{max}(\Sigma'\Sigma-\mathbb{I}) = \max_i(|\sigma_i^2 - 1|) = \max(|\sigma_{min}^2 - 1|,|\sigma_{max}^2 - 1|) \;\;,
\]
where $\sigma_i$ is the i-th singular value of $X_{S^*}$. Due to Lemma A.11 of \citet{su2016} (see Lemma S.2.6 in the supplementary materials) ,we obtain that for some constants $C_1$ and $C_2$
\[
\max(|\sigma_{min}^2 - 1|,|\sigma_{max}^2 - 1|) 
\leqslant C_1 \max(|\sigma_{min} - 1|,|\sigma_{max} - 1|) \leqslant C_2\sqrt{k^* \log(p/k^*)/n}
\]
and in consequence for an arbitrary unit vector $u$
\begin{equation}
\max_{i}\vert [(X_{S^*}'X_{S^*}-\mathbb{I})u]_{i} \vert \leqslant C_2\sqrt{k^* \log(p/k^*)/n}
\label{XX_I2}
\end{equation}
with the probability at least $1 - 2e^{-k^* \log(p/k^*)/2} - (\sqrt{2}ek^*/p)^{k^*}$.

On the other hand, due to Theorem 1.2 in \cite{su2016}, for any constant $\delta_1>0$  
\begin{equation}
P\left(\Vert \hat{b}-b^0 \Vert_2<(1+\delta_1)\sigma \sqrt{2  k \log\left(p/k\right)}\right) \rightarrow 1\;\;.
\label{b_bhat}
\end{equation}
Finally, observe that
\[
\max_{i }\vert [(X_{S^*}'X_{S^*}-\mathbb{I})(\hat{b}_{S^*}-b^0_{S^*})]_{i} \vert = 
\max_{i }\vert [(X_{S^*}'X_{S^*}-\mathbb{I})\frac{\hat{b}_{S^*}-b^0_{S^*}}{\Vert \hat{b}_{S^*}-b^0_{S^*} \Vert_2} ]_{i} \vert \Vert \hat{b}_{S^*}-b^0_{S^*} \Vert_2
\;\;.\]
Thus, the relations (\ref{XX_I2}) and (\ref{b_bhat}) imply that for certain constants $C_1$ and $C_2$:
\begin{equation}
\max_{i }\vert [(X_{S^*}'X_{S^*}-\mathbb{I})(\hat{b}_{S^*}-b^0_{S^*})]_{i} \vert  \leqslant
C_1\sqrt{\frac{k^*k \log(p/k)}{n}}\sqrt{\log(p/k^*)} \leqslant C_2\sqrt{\frac{(k^*)^2 \log(p)}{n}}\lambda^{BH}_{k^*}
\label{XX_I3}
\end{equation}
with the probability tending to 1.\\
The inequality (\ref{XX_I3}), together with (\ref{eq:L3.2.Sec}), provide the thesis of the Lemma.
\end{proof}

\begin{proof} \textit{The proof of Lemma \ref{T_to_M}}\\\\
In order to prove Lemma \ref{T_to_M}, we have to introduce the modification of a vector $T$ similar to that of vector M. 
Denote $T^{(i)} =(T^{(i)}_1,...,T^{(i)}_p)'$:
\[T^{(i)}_j:=
\Bigg\lbrace\begin{array}{l}
T_j  \;\;\; i\neq j \\
\infty \;\;\; i=j\end{array}
\]
In the first step we  show that:
\begin{proposition}
If $T \in H_r$ and $\vert T_i \vert> \lambda_r$ occurs, then $T^{(i)} \in H_r$
\end{proposition}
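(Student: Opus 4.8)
The plan is to exploit the fact that promoting the $i$-th coordinate of $T$ to $\infty$ can only enlarge the lower partial sums of ordered magnitudes (those with indices $\leqslant r$), while leaving the upper partial sums (indices $\geqslant r+1$) completely unchanged; both effects point in exactly the direction required by the two defining conditions of $H_r$.

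First I would locate $\vert T_i \vert$ among the ordered magnitudes of $T$. Since $T \in H_r$, the upper condition of $H_r$ applied with $j=r+1$ gives $\vert T \vert_{(r+1)} \leqslant \lambda_{r+1} \leqslant \lambda_r < \vert T_i\vert$, where the middle inequality uses that $\lambda$ is non-increasing (for $r=p$ the claim below is automatic). Hence $\vert T_i\vert > \vert T\vert_{(r+1)}$, so the coordinate $i$ sits among the $r$ largest-magnitude coordinates of $T$; write $\vert T_i\vert = \vert T\vert_{(s)}$ for some rank $s\leqslant r$ (breaking ties so that coordinate $i$ occupies rank $s$). Passing from $T$ to $T^{(i)}$ removes the entry of rank $s$ and inserts $\infty$ at the top, so the ordered magnitudes reindex as $\vert T^{(i)}\vert_{(1)}=\infty$, $\vert T^{(i)}\vert_{(m)}=\vert T\vert_{(m-1)}$ for $2\leqslant m\leqslant s$, and $\vert T^{(i)}\vert_{(m)}=\vert T\vert_{(m)}$ for $m\geqslant s+1$.

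Next I would verify the two conditions defining $H_r$ for $T^{(i)}$. For the upper condition ($j\geqslant r+1$), every rank $m\geqslant r+1\geqslant s+1$ is unchanged, so $\sum_{m=r+1}^{j}\vert T^{(i)}\vert_{(m)}=\sum_{m=r+1}^{j}\vert T\vert_{(m)}\leqslant\sum_{m=r+1}^{j}\lambda_m$ directly from the upper condition for $T$. For the lower condition ($j\leqslant r$), if $j=1$ the sum contains $\vert T^{(i)}\vert_{(1)}=\infty$ and the strict inequality is trivial; if $s+1\leqslant j\leqslant r$ the ranks are unchanged and the inequality is precisely the lower condition for $T$. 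The only range needing work is $2\leqslant j\leqslant s$, where
\[
\sum_{m=j}^{r}\vert T^{(i)}\vert_{(m)} = \sum_{m=j-1}^{s-1}\vert T\vert_{(m)} + \sum_{m=s+1}^{r}\vert T\vert_{(m)} \geqslant \sum_{m=j}^{r}\vert T\vert_{(m)} > \sum_{m=j}^{r}\lambda_m,
\]
in which the middle inequality follows because $\sum_{m=j-1}^{s-1}\vert T\vert_{(m)}\geqslant\sum_{m=j}^{s}\vert T\vert_{(m)}$ term by term (using the monotonicity $\vert T\vert_{(m)}\geqslant\vert T\vert_{(m+1)}$), and the final strict inequality is the lower condition for $T$. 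This establishes $T^{(i)}\in H_r$.

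The argument carries no analytic difficulty; the main obstacle is purely combinatorial bookkeeping of how the ordered statistics reindex when one coordinate is raised to $\infty$. The crucial preliminary step is confirming, via the upper $H_r$ condition together with the hypothesis $\vert T_i\vert>\lambda_r$, that the promoted coordinate genuinely had rank $s\leqslant r$ in $T$; without this the reindexing above would be incorrect. Once that is secured, the monotonicity of the ordered magnitudes guarantees that every affected partial sum moves in the correct direction, and membership $T^{(i)}\in H_r$ follows.
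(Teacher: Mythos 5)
Your proof is correct and follows essentially the same route as the paper's: locate the promoted coordinate $i$ among the top $r$ ranks via $\vert T_i\vert > \lambda_r \geqslant \lambda_{r+1} \geqslant \vert T\vert_{(r+1)}$, track how the ordered magnitudes reindex when that entry is replaced by $\infty$, and conclude that the partial sums over ranks $\leqslant r$ can only increase while those over ranks $\geqslant r+1$ are untouched. The only difference is cosmetic: you spell out the unchanged upper condition and the edge cases ($j=1$, $r=p$) explicitly, whereas the paper compresses this into the single observation $\vert T^{(i)}\vert_{(s)} \geqslant \vert T\vert_{(s)}$ for $s \leqslant r$.
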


\begin{proof}
Let us recall the definition of a set $H_r$:
\[H_r = \{
w \in \mathbb{R}^p: \forall_{j \leqslant r} \sum_{l=j}^r \lambda_l < \sum_{l=j}^r \vert w \vert_{(l)} \;\;\; and \;\;\; \forall_{j \geqslant r+1} \sum_{l=r+1}^j\lambda_l \geqslant \sum_{l=r+1}^j \vert w\vert_{(l)}
\}
\]
On the one hand, we know that $\vert T_i \vert> \lambda_r$. On the other hand, $T \in H_{r}$ implies that $\vert T \vert_{(r+1)} \leqslant \lambda_{r+1}$ (the second condition in the definition of $H_r$ for $j = r+1$). These inequalities  imply together that $\vert T_i \vert \geqslant \vert T \vert_{(r)}$. Hence
we only have to show  that:
\begin{equation}
(\forall_{j \leqslant r} \sum_{l=j}^r \lambda_l < \sum_{l=j}^r \vert T \vert_{(l)})
\Rightarrow
(\forall_{j \leqslant r} \sum_{l=j}^r \lambda_l < \sum_{l=j}^r \vert T^{(i)} \vert_{(l)})
\label{im:1}
\end{equation}
To see that the above is true we have to consider the relations between the $r$  largest elements of vectors $\vert T \vert$ and $\vert T^{(i)} \vert$. Let us assume that $\vert T_i \vert = \vert T \vert_{(k)}$ for some $k \leqslant r$. By the definition of the vector $\vert T^{(i)} \vert$ we know that $\vert T^{(i)} \vert_{(1)} = \vert T^{(i)}_i \vert  = \infty$ and that the other ordered statistics of $\vert T^{(i)} \vert$ are related to the ordered statistics of $\vert T \vert$ in the following way:
\[
\forall_{2 \leqslant s \leqslant k} \;\;\;
\vert T^{(i)} \vert_{(s)} = \vert T \vert_{(s-1)} 
\]
\[
\forall_{k+1 \leqslant s \leqslant r} \;\;\;
\vert T^{(i)} \vert_{(s)} = \vert T \vert_{(s)} 
\]
In consequence we obtain that:
\[
\forall_{1 \leqslant s \leqslant r} \;\;\;
\vert T^{(i)} \vert_{(s)} \geqslant \vert T \vert_{(s)} 
\]
and this implies (\ref{im:1}).
\end{proof}
In the second step of the proof of Lemma \ref{T_to_M} we show that:
\[
T^{(i)} \in H_r \land Q_2 \Rightarrow M^{(i)} \in H^{\gamma}_r
\]

In order to prove this, we use the following Proposition:

\begin{proposition}
\label{prop2}
Let us assume we have three vectors $A,B,C \in \mathbb{R}^p$ and that $A = B+C$. Furthermore, let us assume that the vector $A$ is ordered ($\vert A_1 \vert \geqslant...\geqslant \vert A_p\vert$)
and define $d = \sup_i \vert C_i \vert$.
Under the above assumptions:
\[
 \Big\vert \vert A_i \vert - \vert B \vert_{(i)} \Big\vert \leqslant d
\]
for all $i = 1,...,p$.
\end{proposition}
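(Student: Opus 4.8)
The plan is to reduce the claim to a standard stability statement for order statistics under coordinate-wise perturbations. First I would record the elementary pointwise bound: since $A_j = B_j + C_j$, the reverse triangle inequality gives
\[
\bigl\vert \vert A_j \vert - \vert B_j \vert \bigr\vert \leqslant \vert C_j \vert \leqslant d
\]
for every coordinate $j$. Writing $a_j = \vert A_j\vert$ and $b_j = \vert B_j\vert$, this says $\vert a_j - b_j\vert \leqslant d$ for all $j$. Because the vector $A$ is assumed ordered, its absolute values are already sorted, so $\vert A_i\vert = \vert A\vert_{(i)} = a_{(i)}$, and the assertion $\bigl\vert \vert A_i\vert - \vert B\vert_{(i)}\bigr\vert \leqslant d$ becomes exactly $\vert a_{(i)} - b_{(i)}\vert \leqslant d$.

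It therefore remains to prove the following order-statistics lemma: if two nonnegative sequences satisfy $\vert a_j - b_j\vert \leqslant d$ for every $j$, then their order statistics obey $\vert a_{(i)} - b_{(i)}\vert \leqslant d$ for every $i$. I would establish this by a counting (threshold) argument combined with symmetry. To obtain $a_{(i)} \leqslant b_{(i)} + d$, consider the set of indices $j$ with $a_j \geqslant a_{(i)}$; by the definition of the $i$-th largest value there are at least $i$ of them. For each such $j$ the pointwise bound yields $b_j \geqslant a_j - d \geqslant a_{(i)} - d$, so at least $i$ of the values $b_j$ are $\geqslant a_{(i)} - d$. Hence the $i$-th largest among the $b_j$, namely $b_{(i)}$, satisfies $b_{(i)} \geqslant a_{(i)} - d$, which is $a_{(i)} \leqslant b_{(i)} + d$. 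Interchanging the roles of $a$ and $b$ gives the reverse inequality $b_{(i)} \leqslant a_{(i)} + d$, and combining the two yields $\vert a_{(i)} - b_{(i)}\vert \leqslant d$.

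The only substantive step is this counting argument, and the main point to treat with care is the bookkeeping of ties: I would phrase the count as ``at least $i$ indices with $a_j \geqslant a_{(i)}$'' rather than ``exactly $i$'', so that equal values at the threshold do not disrupt the bound. Everything else reduces to the reverse triangle inequality and to the observation that the assumed ordering of $A$ makes $\vert A_i\vert$ coincide with its own $i$-th order statistic, so that no separate sorting of the $A$-side is needed.
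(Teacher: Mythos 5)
Your proof is correct, but it follows a different route than the paper's. You isolate a general order-statistics stability lemma --- if $\vert a_j - b_j\vert \leqslant d$ coordinatewise, then $\vert a_{(i)} - b_{(i)}\vert \leqslant d$ for every $i$ --- and prove it by a threshold-counting argument (at least $i$ of the $a_j$ lie above $a_{(i)}$, hence at least $i$ of the $b_j$ lie above $a_{(i)} - d$, hence $b_{(i)} \geqslant a_{(i)} - d$), with the reverse inequality obtained by swapping the roles of $a$ and $b$. The paper instead works with the extremal index $i_0$ attaining $\sup_i \bigl\vert \vert A_{i_0}\vert - \vert B\vert_{(i_0)}\bigr\vert$, splits into cases according to whether $\vert B_{i_0}\vert$ lies above or below $\vert B\vert_{(i_0)}$, and in the nontrivial case runs a pigeonhole comparison between the collection $\{\vert A_1\vert,\ldots,\vert A_{i_0}\vert\}$ and the top $i_0 - 1$ order statistics of $\vert B\vert$ to produce a matched index $i_1$ at which the pointwise bound applies; it also uses the ordering of $A$ in an essential way inside that pigeonhole step. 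Both proofs rest on the reverse triangle inequality to pass from $\vert C_j\vert$ to $\bigl\vert \vert A_j\vert - \vert B_j\vert\bigr\vert$, so the substance differs only in how the sorting is controlled. Your version buys modularity and symmetry: it is the standard proof that the sorting map is $1$-Lipschitz in the $\ell^\infty$ norm, it needs the orderedness of $A$ only to identify $\vert A_i\vert$ with $\vert A\vert_{(i)}$, and its explicit ``at least $i$'' bookkeeping handles ties cleanly. The paper's version avoids stating a separate lemma and argues directly at the worst-case index, at the cost of a case analysis and a set-cardinality argument that requires more care when values repeat.
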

\begin{proof}
 Let 
$$e = \sup_i \Big\vert \vert A_i \vert - \vert B_i \vert \Big\vert\;.$$
From the triangular inequality, we have:
\[
\vert C_i \vert = \vert A_i-B_i \vert\geqslant \Big\vert \vert A_i \vert - \vert B_i \vert \Big\vert \;\;\;\;\; \mbox{for}\;\;\; i = 1,2,...,p
\]
In consequence, $d \geqslant e$. Thus, to prove the Proposition \ref{prop2}
it is sufficient to prove that
\[\Big\vert \vert A_i \vert - \vert B \vert_{(i)} \Big\vert \leqslant e  \;\;\;\;\; \mbox{for}\;\;\; i = 1,2,...,p\;.
\]

For this aim, let $i_0$ be the index such that:
\[
\Big\vert \vert A_{i_0} \vert - \vert B \vert_{(i_0)} \Big\vert = \sup_i \Big\vert \vert A_i \vert - \vert B \vert_{(i)} \Big\vert = f\;.
\]
When $\vert A_{i_0} \vert = \vert B \vert_{(i_0)}$, then $f=0$ and the thesis is obtained immediately.\\
Let us consider the case when $\vert A_{i_0} \vert > \vert B \vert_{(i_0)}$.\\ 
If $\vert B_{i_0} \vert \leqslant \vert B \vert_{(i_0)}$, then:
\[
f = \Big\vert \vert A_{i_0} \vert - \vert B \vert_{(i_0)} \Big\vert = \vert A_{i_0} \vert - \vert B \vert_{(i_0)} \leqslant \vert A_{i_0} \vert - \vert B_{i_0} \vert\leqslant e
\]
and in consequence we obtain the thesis.\\
If $\vert B_{i_0} \vert \ > \vert B \vert_{(i_0)}$, then:
\[
\vert B_{i_0} \vert \in \mathbb{B} = \Big\{\vert B \vert_{(1)},\vert B \vert_{(2)},...,\vert B \vert_{(i_0-1)} \Big\}
\]
On the other hand we know that:
\[
\vert A_{i_0}\vert \in \mathbb{A} = \Big \{ \vert A_1\vert, \vert A_2\vert,...,\vert A_{i_0}\vert \Big \}\;.
\]
Therefore, the set $\mathbb{A}$ has one more element than $\mathbb{B}$ does. Hence, there exists an element  $|A_{i_1}| \in \mathbb{A}$, which is associated with an element $|B_{i_1}|$ from a complement of the set $\mathbb{B}$:
\[
|B_{i_1}| \in \mathbb{B}^c = \Big\{\vert B \vert_{(i_0)},\vert B \vert_{(i_0+1)},...,\vert B \vert_{(p)} \Big\}
\]
In consequence, due to facts $|A_{i_0}| \leqslant |A_{i_1}|$ and $|B_{i_1}| \leqslant \vert B \vert_{(i_0)}$ we obtain:
\[
 f= |A_{i_0}| - \vert B \vert_{(i_0)} \leqslant
 |A_{i_1}| - |B_{i_1}|\leqslant e\;\;,
\]
which completes the proof.\\
The proof for the case when $\vert A_{i_0} \vert < \vert B \vert_{(i_0)}$ is analogous.
\end{proof}
Now let us recall the relation between $T^{(i)}$ and $M^{(i)}$:\\
\[T^{(i)}_j:=
\Bigg\lbrace\begin{array}{l}
M^{(i)}_j + \Gamma_j \;\;\; i\neq j \\
M^{(i)}_j   \;\;\; i=j\end{array}
\]
Furthermore recall that $Q_2 = \{\max_i\vert \Gamma_i \vert \leqslant\gamma \}
$. 
When we assume that $Q_2$ occurs and apply Proposition \ref{prop2} to a vector $(\vert T^{(i)}\vert_{(1)},...,\vert T^{(i)}\vert_{(p)})'$, we obtain that:

\begin{equation}
\vert M^{(i)} \vert_{(j)} -\gamma \leqslant \vert T^{(i)} \vert_{(j)} \leqslant \vert M^{(i)} \vert_{(j)} +\gamma
\label{cond:WjMj}
\end{equation}
for all $j = 1,...,p$.
Let us recall the definitions of $H_r$ and $H^{\gamma}_r$:
\[H_r = \{
w \in \mathbb{R}^p: \forall_{j \leqslant r} \sum_{l=j}^r \lambda_l < \sum_{l=j}^r \vert w \vert_{(l)} \;\;\; and \;\;\; \forall_{j \geqslant r+1} \sum_{l=r+1}^j\lambda_l \geqslant \sum_{l=r+1}^j \vert w\vert_{(l)}
\}
\]
\[H_r^{\gamma} = \{
w \in \mathbb{R}^p: \forall_{j \leqslant r} \sum_{i=j}^r (\lambda_i- \gamma) < \sum_{i=j}^r \vert w \vert_{(i)} \;\;\; and \;\;\; \forall_{j \geqslant r+1} \sum_{i=r+1}^j(\lambda_i+ \gamma) \geqslant \sum_{i=r+1}^j \vert w\vert_{(i)}\;\;.
\}\]
It can be noticed that due to condition (\ref{cond:WjMj}) we have:
\[
T^{(i)} \in H_r \land Q_2 \Rightarrow M^{(i)} \in H^{\gamma}_r
\]
and the proof of Lemma \ref{T_to_M} is completed.
\end{proof}

\begin{proof}\textit{The proof of Lemma \ref{Conv}}\\
Without the loss of generality, let us assume that $1 \in S^c$.  
Observe that conditionally on $\epsilon$, the vector $M^{(i)}$ and the variable $M_i$ are independent. Recall that $Q_3 = \{ \frac{\Vert \epsilon \Vert_2}{\sigma\sqrt{n}} \leqslant 1+1/k^* \}$ depends only on $\epsilon$. Therefore:
\begin{equation}
P\left(M^{(i)} \in H_r^{\gamma},\vert M_i \vert> \lambda_r-\gamma,Q_3\right) = E \left( P\left(M^{(i)} \in H_r^{\gamma},Q_3| \epsilon) P(\vert M_i \vert> \lambda_r-\gamma,Q_3| \epsilon \right)  \right)\;.
\label{pom2}
\end{equation}
Now for $i \in S^c$:
\[
P\left(\vert M_i \vert> \lambda_r-\gamma,Q_3| \epsilon\right) = 
P\left(\frac{\vert X_i'\epsilon\vert}{\frac{\Vert \epsilon \Vert_2}{\sigma\sqrt{n}}}>\frac{\lambda_r-\gamma}{\frac{\Vert \epsilon \Vert_2}{\sigma\sqrt{n}}},Q_3|\epsilon\right) \leqslant\]
\begin{equation}
\leqslant
P\left(\sqrt{n}\frac{\vert X_i'\epsilon\vert}{\Vert \epsilon \Vert_2}>\frac{\lambda_r-\gamma}{\sigma(1+1/k^*)}| \epsilon\right) =
 2 \left(1-\Phi\left(\frac{\lambda_r-\gamma}{\sigma(1+1/k^*)}\right) \right)\;, 
\label{pom1}
\end{equation}
where the last equality is a consequence of the fact that conditionally on $\epsilon$, $\sqrt{n}\frac{X_i'\epsilon}{\Vert \epsilon \Vert_2}$ has a standard normal distribution. Furthermore, from the definition of $\lambda_r$ and $\gamma$ we know that for a large enough $n$
\[
\frac{\lambda_r-\gamma}{\sigma(1+1/k^*)} =   \frac{ 1+\delta- \sqrt{\frac{(k^*)^2 \log p}{n}}C_q\frac{\lambda^{BH}_{k^*}}{\lambda^{BH}_r}}{1+1/k^*}\lambda^{BH}_r \geqslant   \left(1+3\delta/4\right)\lambda^{BH}_r \geqslant (1+\delta/2)\sqrt{2 \log(p/qr)}\;,
\]
where in the last inequality we used the fact that $\lambda^{BH}_r/\sqrt{2 \log(p/qr)} \rightarrow 1$. Let us denote
\[
s = (1+\delta/2)\sqrt{2 \log(p/qr)}\;\;.
\]
By applying the above inequality to (\ref{pom1}) we obtain for a large enough $n$:
\begin{equation}
P\left(\vert M_i \vert> \lambda_r-\gamma,Q_3| \epsilon\right) \leqslant
 2 \left(1-\Phi(s) \right)\leqslant \exp(-s^2/2) = \left(\frac{qr}{p} \right)^{(1+\delta/2)^2}\;,
\label{Mi_bound}
\end{equation}
where in the second inequality we used the  classical approximation to the tail probability of the standard normal distribution (\ref{clas}).

By applying (\ref{Mi_bound}) and (\ref{pom2}) to 
\[\sum_{r=1}^{k^*}\frac{1}{r}\sum_{i \in S^{c}}P\left(M^{(i)} \in H_r^{\gamma},\vert M_i \vert> \lambda_r-\gamma,Q_3\right)\;, \]
we obtain for a large enough $n$:
\[
\sum_{r=1}^{k^*}\frac{1}{r}\sum_{i \in S^{c}}P\left(M^{(i)} \in H_r^{\gamma},\vert M_i \vert> \lambda_r-\gamma,Q_3\right) 
\leqslant
\sum_{r=1}^{k^*}\frac{1}{r}\sum_{i \in S^{c}}P\left(M^{(i)} \in H_r^{\gamma},Q_3\right) \left(\frac{qr}{p} \right)^{(1+\delta/2)^2} \leqslant
\]
\[
\leqslant
\frac{q}{p}\left(\frac{k^*}{p} \right)^{\delta} \sum_{r=1}^{k^*}\sum_{i \in S^{c}}P\left(M^{(i)} \in H_r^{\gamma},Q_3\right) = 
\frac{(p-k)q}{p}\left(\frac{k^*}{p} \right)^{\delta} \sum_{r=1}^{k^*}P\left(M^{(1)} \in H_r^{\gamma},Q_3\right)\;,\]
where in the last equality we used the assumption that $1 \in S^c$, the fact that for $i \in S^c$ all $M^{(i)}$ have the same distribution  and the fact that there are $p-k$ elements in $S^c$.\\

Due to the fact that $(k^*/p)^{\delta} \rightarrow 0$ , in order to prove the Lemma it remains to be shown that
\begin{equation}
\sum_{r=1}^{k^*}P\left(M^{(1)} \in H_r^{\gamma},Q_3\right)  \mbox{ is bounded.}
\label{pom3}
\end{equation}

We  prove this by showing that
\begin{equation}
\sum_{r=1}^{k}P( M^{(1)} \in H^{\gamma}_r,Q_3) \mbox{ is bounded}
\label{Part1}
\end{equation}
and
\begin{equation}
\sum_{r=k+2}^{k^*}P( M^{(1)} \in H^{\gamma}_r,Q_3) \rightarrow 0\;.
\label{Part2}
\end{equation}
Naturally, $P( M^{(1)} \in H^{\gamma}_{k+1}) \leqslant 1$ which, together with (\ref{Part1}) and (\ref{Part2}) would provide (\ref{pom3}) and in consequence the thesis.

We begin by proving (\ref{Part1}).
Let us denote by $\tilde{W}$ a vector of elements of $M^{(1)}$ with indices in $S$ (corresponding to non-zero elements in $b^0_i$). 
Directly from the definition of the set $H_r^{\gamma}$, we have for $r \leqslant k$:
\[
P( M^{(1)} \in H^{\gamma}_r,Q_3) \leqslant 
P\left(\vert M^{(1)} \vert_{(r+1)}\leqslant \lambda_{r+1}+\gamma,Q_3\right)\;. 
\]
Furthermore
\[
P\left(\vert M^{(1)} \vert_{(r+1)}\leqslant \lambda_{r+1}+\gamma,Q_3\right) 
\leqslant 
P\left(\vert \tilde{W} \vert_{(r)}\leqslant \lambda_{r+1}+\gamma,Q_3\right)\;,
\] 
which is a consequence of the fact that $\tilde{W}$ is a subvector of $M^{(1)}$ and that $M^{(1)}_1= \vert M^{(1)} \vert_{(1)} = \infty$ ($1 \in S^c$). Therefore, it is obvious that $\vert \tilde{W} \vert_{(r)} \leqslant \vert M^{(1)} \vert_{(r+1)}$.\\
Moreover, let $W$ be a modification of $\tilde{W}$, where each $b^0_i$ is replaced by  $2\sigma(1+\delta)\sqrt{2 \log p}$ and the resulting vector is multiplied by a function:
\[
g(\epsilon) = 
\Bigg\lbrace\begin{array}{cc}
1  & \mbox{if}\;\;Q_3\;\;\mbox{holds} \\
\infty & \mbox{otherwise}\;\;.\end{array}
\]
Therefore, $W_i = (X_j'\epsilon+ 2\sigma(1+\delta)\sqrt{2 \log p})g(\epsilon)$ for the corresponding $j \in S$. Naturally, the elements of $W$, conditionally on $\epsilon$, are independent and identically distributed.
Furthermore, due to the fact that $|X_i'\epsilon+b^0_i| =^D|X_i'\epsilon+|b^0_i||$,  the fact that the density of  $X'_i \epsilon +|b^0_i|$ has a mode at $|b^0_i|$, the assumption $\min_{i \in S} \vert b^0_i \vert > 2\sigma(1+\delta)\sqrt{2 \log p}$ and the fact that $2\sigma(1+\delta)\sqrt{2 \log p} \geqslant \lambda_{r+1}+\gamma$ for a large enough $n$, it holds that:
\[
P\left(\vert \tilde{W} \vert_{(r)}\leqslant \lambda_{r+1}+\gamma,Q_3\right)
\leqslant
P\left(\vert W \vert_{(r)}\leqslant \lambda_{r+1}+\gamma\right)\;.
\]

Now,
\[
P\left(\vert W \vert_{(r)}\leqslant \lambda_{r+1}+\gamma\right) = 
\sum_{i = k-r+1}^k P\left(exactly \;\; i \;\; of \;\; \vert W \vert 's \;\; are \;\; \leqslant \lambda_{r+1}+\gamma \right) =
\]
\[
\sum_{i = k-r+1}^k E\left[ P\left(exactly \;\; i \;\; of \;\; \vert W \vert 's \;\; are \;\; \leqslant \lambda_{r+1}+\gamma | \epsilon\right)\right] =
\]

\begin{equation}
 = 
\sum_{i = k-r+1}^k {k \choose i} E\left[ P\left(\vert W_1 \vert \leqslant \lambda_{r+1} +\gamma| \epsilon\right)^i \left(1-P\left(\vert W_1 \vert \leqslant \lambda_{r+1} +\gamma| \epsilon\right)\right)^{k-i}\right]\leqslant
\label{OrderStat}
\end{equation}
\[
 \leqslant
\sum_{i = k-r+1}^k {k \choose i} E\left[ P\left(\vert W_1 \vert \leqslant \lambda_{r+1} +\gamma| \epsilon\right)^i \right]\;,
\]
where in the equality between the second and the third line we used the standard combinatorial arguments for calculating the cumulative distribution function of the $r$-th order statistic and the fact that elements of $W$ are independent conditionally on $\epsilon$ . In the inequality we used the fact that 
$ P\left(\vert W_1 \vert \leqslant \lambda_{r+1} +\gamma| \epsilon\right) \leqslant 1$.

Now, for some $j \in S$ (corresponding to $W_1$) and a large enough $n$ we have
\[
P\left(\vert W_1 \vert \leqslant \lambda_{r+1} +\gamma| \epsilon\right)  = 
P\left(\vert (X_j'\epsilon+ 2\sigma(1+\delta)\sqrt{2 \log p})g(\epsilon) \vert \leqslant \lambda_{r+1} +\gamma| \epsilon\right)  = \]

\[
=P\left(\vert (X_j'\epsilon+ 2\sigma(1+\delta)\sqrt{2 \log p}) \vert \leqslant \lambda_{r+1} +\gamma, Q_3| \epsilon\right) \leqslant
P\left( X_j'\epsilon \leqslant -\sigma(1+\delta/2)\sqrt{2 \log p}, Q_3| \epsilon\right)\;, 
\]
where in the second line we first use the definition of the function $g(\epsilon)$ and in the inequality we skip the absolute value and use the fact that for a large enough $n$, $\lambda_{r+1} +\gamma \leqslant \sigma(1+1.5\delta)\sqrt{2 \log p}$.
Now, due to the fact that conditionally on $\epsilon$, $X_j' \epsilon$ is normal with a mean $0$ and a standard deviation $\Vert \epsilon \Vert_2/\sqrt{n}$, and the fact that $Q_3$ provides the upper bound on this standard deviation, for a large enough $n$ we obtain:
\[
P\left( X_j'\epsilon \leqslant -\sigma(1+\delta/2)\sqrt{2 \log p}, Q_3| \epsilon\right)  = 
\]
\[
= P\left( \frac{X_j'\epsilon}{\Vert \epsilon \Vert_2/\sqrt{n}}  \leqslant -\sigma \frac{(1+\delta/2)\sqrt{2 \log p}}{\Vert \epsilon \Vert_2/\sqrt{n}}, \{ \frac{\Vert \epsilon \Vert_2}{\sigma\sqrt{n}} \leqslant 1+1/k^* \}| \epsilon\right)  \leqslant
\]
\[
\leqslant P\left( \frac{X_j'\epsilon}{\Vert \epsilon \Vert_2/\sqrt{n}}  \leqslant -\frac{(1+\delta/2)\sqrt{2 \log p}}{1+1/k^*}| \epsilon\right)  =
\]
\[  = 
1 - \Phi \left( \frac{(1+\delta/2)\sqrt{2 \log p}}{(1 +1/k^*)} \right)
\leqslant \left(\frac{1}{p}\right)^{(1+\delta/4)^2}\;.
\]

In consequence we can limit (\ref{Part1}) from the above 
\[
\sum_{r=1}^{k}P( M^{(1)} \in H^{\gamma}_r) \leqslant
\sum_{r=1}^{k} \sum_{i = k-r+1}^k {k \choose i}  \left(\left(\frac{1}{p}\right)^{(1+\delta/4)^2}
\right)^i = 
\]
\[
=\sum_{r=1}^{k} r {k \choose r} \left( \left(\frac{1}{p}\right)^{(1+\delta/4)^2}\right)^r
 \leqslant
\sum_{r=1}^{k} \left(\frac{1}{r}\right)^{(r-1)}  \left( \frac{ek}{p}\left(\frac{1}{p}\right)^{(1+\delta/4)^2-1}
\right)^r \rightarrow 0
\]
for $n \rightarrow \infty$.
To see the equality between the first and the second line observe that only index $i$ depends on $r$. Therefore, we sum multiple times the same elements and for a given $i$, the summation element occurs $i$ times. The inequality uses the upper bound on the Newton symbol ${k \choose r} \leqslant \left(\frac{ke}{r}\right)^r$.
Naturally, this result is much stronger than the relation (\ref{Part1}). 

\begin{remark}
The proof of the relation (\ref{Part1}) is the only place where we use the assumption on the signal strength in the context of FDR control. Naturally, when $k$ is bounded we obtain  ( \ref{Part1}) immediately:
\[
\sum_{r=1}^{k}P( M^{(1)} \in H^{\gamma}_r,Q_3) \leqslant k
\]
and the condition on the signal strength is redundant. 
\end{remark}
Now, we turn our attention to (\ref{Part2}). From now on we assume that $r\geqslant k+2$. Let us denote by $V$ a subvector of $M^{(1)}$ consisting of elements with indices in $S^c \setminus \{1 \}$ (corresponding to elements of $b^0_i$ equal to 0, except $M^{(1)}_1$). Notice that $\vert M^{(1)} \vert_{(k+2)} \leqslant \vert V \vert_{(1)}$. This is a consequence of the fact that $\vert V \vert_{(1)}$ is the largest element in $\vert V \vert$ and is equal or larger than $p-k-1$ elements in $\vert M^{(1)}\vert$. 
Similarly, we can show that:
\begin{equation}
P( M^{(1)} \in H^{\gamma}_r,Q_3) \leqslant 
P\left(\vert M^{(1)} \vert_{(r)}\geqslant \lambda_{r}-\gamma,Q_3\right) \leqslant 
P\left(\vert V \vert_{(r-k-1)}\geqslant \lambda_{r}-\gamma,Q_3\right)\;.
\label{pom11} 
\end{equation}
Conditionally on $\epsilon$, elements of $V$ are independent and identically distributed. Therefore we have:
\[
V =^{D}\frac{\Vert \epsilon \Vert_2}{\sqrt{n}}(Z_1,...,Z_{p-k-1})'\;,
\]
where the random vector $Z = (Z_1,...,Z_{p-k-1})' \sim N(0,\mathbb{I}_{p-k-1})$ is independent of $\Vert \epsilon \Vert_2$.\\
Notice that the probability in (\ref{pom11})  is maximized for the largest possible standard deviation of $V$ (maximal $\frac{\Vert \epsilon \Vert_2}{\sqrt{n}}$). Therefore, when restricting to $Q_3$, we obtain:
\[
P\left(\vert V \vert_{(r-k-1)}\geqslant \lambda_{r}-\gamma, Q_3\right)=
P\left(\frac{\Vert \epsilon \Vert_2}{\sqrt{n}}\vert Z \vert_{(r-k-1)}\geqslant \lambda_{r}-\gamma,Q_3\right)
 \leqslant \]
 \[
 \leqslant
P\left(\vert Z \vert_{(r-k-1)}\geqslant \frac{\lambda_{r}-\gamma}{\sigma(1+1/k^*)}\right)\] 
Moreover for a large enough $n$ we have
\[
(1+\delta/2)\sqrt{2 log p} \leqslant \frac{\lambda_{r}-\gamma}{\sigma(1+1/k^*)}\;, 
\]
which follows directly from the fact that for a large enough $n$:
\[
\frac{(\lambda_{r}-\gamma)/(1+1/k^*)}{\sigma\sqrt{2 \log(p/r)}}  \geqslant (1+3\delta/4)
\]
and from the fact that:
\[
\frac{\log(p/r)}{ \log p} = 1- \frac{\log r}{ \log p}\rightarrow 1
\]
for $r \leqslant k^*$.\\
In consequence we obtain for a large enough $n$
\[
P\left(\vert Z \vert_{(r-k-1)}\geqslant \frac{\lambda_{r}-\gamma}{\sigma(1+1/k^*)}\right) \leqslant
P\left(\vert Z \vert_{(r-k-1)}\geqslant s\right) 
\] 
where
$
s = (1+\delta/2)\sqrt{2 log p}
$.\\
Let us denote by $u_1,...,u_{p-k-1}$ i.i.d. random variables from the uniform distribution $U[0,1]$ and by $u_{[1]} \leqslant... \leqslant u_{[p-k-1]}$, the corresponding order statistics.
We know that
\[
\vert Z \vert_{(r-k-1)} =^D \Phi^{-1}(1 - u_{[r-k-1]}/2)\;.
\]
Therefore, by using the classical upper bound 
\begin{equation}\label{upper}
\mbox{for}\;s>0\;,\;\;1-\Phi(s)\leq \frac{\phi(s)}{s}
\end{equation}
we obtain for a large enough $n$:
 \[
P\left(\vert Z \vert_{(r-k-1)}\geqslant s\right) =
P\left(2(1-\Phi(s))\geqslant u_{[r-k-1]} \right) \leqslant
P\left(\left(\frac{1}{p} \right)^{(1+\delta/2)^2}\geqslant u_{[r-k-1]} \right)\;. 
\] 
We also know that the i-th order statistic of the uniform distribution is a beta-distributed random variable:
\[
u_{[i]} \sim Beta(i,p-k-i)\;.
\]
On the other hand, a well known fact is that if $A_1 \sim Gamma(i,\theta)$ and $A_2 \sim Gamma(p-k-i,\theta)$ are independent, then $A_1/(A_1+A_2) \sim Beta(i,p-k-i)$.
In consequence, when $E_1,...,E_{p-k}$ are i.i.d. random variables from the exponential distribution with a mean equal to 1 it holds:
\[
u_{[i]} =^D \frac{\sum_{j = 1}^i E_j}{\sum_{j = 1}^{p-k} E_j}\;. 
\]  
Therefore
\[
P\left(\left(\frac{1}{p} \right)^{(1+\delta/2)^2}\geqslant u_{[r-k-1]} \right) 
=P\left((p-k)\left(\frac{1}{p} \right)^{(1+\delta/2)^2}\geqslant \frac{\sum_{j = 1}^{r-k-1} E_j}{\frac{1}{p-k}\sum_{j = 1}^{p-k} E_j}  \right) 
\]
and\[
P\left((p-k)\left(\frac{1}{p} \right)^{(1+\delta/2)^2}\geqslant \frac{\sum_{j = 1}^{r-k-1} E_j}{\frac{1}{p-k}\sum_{j = 1}^{p-k} E_j}  \right) 
\leqslant
P\left(2\left(\frac{1}{p} \right)^{\delta}\geqslant \sum_{j = 1}^{r-k-1} E_j \right) 
+P\left(\frac{1}{p-k}\sum_{j = 1}^{p-k} E_j>2\right)\]
\[
=F_E(2 p^{-\delta}, r-k-1,1) +P\left(\frac{1}{p-k}\sum_{j = 1}^{p-k} E_j>2\right)\;,
\]
where $F_E$ is the Erlang cumulative distribution function. Therefore we obtain that
\[
P( M^{(1)} \in H^{\gamma}_r) \leqslant 
F_E(2 p^{-\delta}, r-k-1,1) +P\left(\frac{1}{p-k}\sum_{j = 1}^{p-k} E_j>2\right)\;.
\]
To prove (\ref{Part2}) it remains to be shown that: 
\begin{equation}
\sum_{r = k+2}^{k^*}F_E(2 p^{-\delta}, r-k-1,1) \rightarrow 0
\label{pomoc1234}
\end{equation}
and
\begin{equation}
\sum_{r = k+2}^{k^*}P\left(\frac{1}{p-k}\sum_{j = 1}^{p-k} E_j>2\right) = (k^*-k-1)P\left(\frac{1}{p-k}\sum_{j = 1}^{p-k} E_j>2\right) \rightarrow 0\;.
\label{pom12}
\end{equation}
The first relation follows directly from the properties of the Erlang cumulative distribution function.
The second relation is a consequence of Chebyshev's inequality (for details see the supplementary materials).
This ends the proof.
\end{proof}

\begin{proof} \textit{Proof of Lemma \ref{Pi}}\\
Recall that we want to show
\[
P\left(\bigcap_{i \in S}\{\hat b_i \neq 0 \}\right) \rightarrow 1
\]
We can bound the considered probability in the following way:
\[
P\left(\bigcap_{i \in S}\{\hat b_i \neq 0 \}\right) = \sum_{r = 1}^p P\left(\bigcap_{i \in S}(\{\hat b_i \neq 0 \}\cap \{R = r\})\right) = 
\sum_{r = 1}^p P\left(\bigcap_{i \in S}(\{|T_i|> \lambda_r \}\cap \{R = r\})\right) \geqslant
\]
\[
\geqslant \sum_{r = 1}^p P\left(\bigcap_{i \in S}(\{|T_i|> \lambda_1 \}\cap \{R = r\})\right) = P\left(\bigcap_{i \in S}\{|T_i|> \lambda_1 \} \right)\;, 
\]
where in the first and the last equation we used the law of total probability ($\{R = r\}_{r = 1}^p$ is a partition of a sample space). The second equation is a consequence of Theorem \ref{Lem1} and the inequality comes from the fact that $\lambda$ is a decreasing sequence. Now, observe that:
\[
P\left(\bigcap_{i \in S}\{|T_i|> \lambda_1 \} \right)  \geqslant P\left(\bigcap_{i \in S}\{|T_i|> \lambda_1 \} \cap Q_2 \right) 
 \]
and recall that $T_i = M_i +\Gamma_i$ and $Q_2 = \{\max_i |\Gamma_i|\leqslant \gamma\}$. Therefore, due to the triangle inequality we have:
\[
P\left(\bigcap_{i \in S}\{|T_i|> \lambda_1 \} \cap Q_2 \right) \geqslant
P\left(\bigcap_{i \in S}\{|M_i|> \lambda_1 + \gamma \} \cap Q_2 \right)
\]
Now, because $P(Q_2) \rightarrow 1$, we only have to show that
\[
P\left(\bigcap_{i \in S}\{|M_i|> \lambda_1 + \gamma \}  \right) \rightarrow 1\;.
\]
Let us consider the properties of $M_i = X_i'\epsilon +b^0_i$. Notice that due to the symmetry of $X_i'\epsilon$ distribution, we have $|X_i'\epsilon +b^0_i| =^D |X_i'\epsilon +|b^0_i||$, therefore
\[
P\left(\bigcap_{i \in S}\{|M_i|> \lambda_1 + \gamma \}  \right) = 
P\left(\bigcap_{i \in S}\{|X_i'\epsilon +|b^0_i||> \lambda_1 + \gamma \}  \right) \geqslant
P\left(\bigcap_{i \in S}\{X_i'\epsilon > \lambda_1 + \gamma -|b^0_i| \}  \right)\;, 
\]
where in the last inequality we omit the absolute value in $|X_i'\epsilon +|b^0_i||$ and subtract $|b^0_i|$. Now, due to the assumption on the signal strength, we have for a large enough $n$:
\[
P\left(\bigcap_{i \in S}\{X_i'\epsilon > \lambda_1 + \gamma -|b^0_i| \}  \right) \geqslant
P\left(\bigcap_{i \in S}\{X_i'\epsilon > -\sigma(1+\delta/2)\sqrt{2 \log p} \}  \right)\;. 
\]
This is a consequence of the fact that for a large enough $n$:
\[
\lambda_1 + \gamma \leqslant\sigma(1+1.5\delta)\sqrt{2 \log p}\;.
\]
Moreover, we know that conditionally on $\epsilon$, $X'_i\epsilon$ are independent from the normal distribution $N(0, \Vert \epsilon \Vert_2^2/n) $. Therefore we have:
\[
P\left(\bigcap_{i \in S}\{X_i'\epsilon > -\sigma(1+\delta/2)\sqrt{2 \log p} \}  \right) \geqslant
P\left(\bigcap_{i \in S}\{X_i'\epsilon > -\sigma(1+\delta/2)\sqrt{2 \log p} \}\cap Q_3  \right) =
\]
\[
 = P\left(\bigcap_{i \in S}\{\sqrt{n}\frac{X_i'\epsilon}{\Vert \epsilon \Vert_2} > -\sigma\sqrt{n}\frac{(1+\delta/2)\sqrt{2 \log p}}{\Vert \epsilon \Vert_2} \}\cap Q_3  \right)\geqslant
 \]
 \[
 \geqslant
P\left(\bigcap_{i \in S}\{\sqrt{n}\frac{X_i'\epsilon}{\Vert \epsilon \Vert_2} > -\frac{(1+\delta/2)}{(1+1/k^*)}\sqrt{2 \log p} \}\cap Q_3  \right)\;,
\]
where in the last inequality we have used the condition defining $Q_3$. 
Again, due to the fact that $P(Q_3) \rightarrow 1$, we only have to show that
\[
P\left(\bigcap_{i \in S}\{\sqrt{n}\frac{X_i'\epsilon}{\Vert \epsilon \Vert_2} > -a\sqrt{2 \log p} \}  \right) \rightarrow 1\;,
\]
where $a = \frac{(1+\delta/2)}{(1+1/k^*)}$.
Now, using the fact that conditionally on $\epsilon$, $X'_i\epsilon$ are independent random variables  from the normal distribution $N(0, \Vert \epsilon \Vert_2^2/n)$, we obtain
\[
P\left(\bigcap_{i \in S}\{\sqrt{n}\frac{X_i'\epsilon}{\Vert \epsilon \Vert_2} > -a\sqrt{2 \log p} \}  \right) =
E\left(P\left(\bigcap_{i \in S}\{\sqrt{n}\frac{X_i'\epsilon}{\Vert \epsilon \Vert_2} > -a\sqrt{2 \log p} \}| \epsilon  \right) \right) = 
\]
\[
=E\left(\prod_{i \in S}P\left(\{\sqrt{n}\frac{X_i'\epsilon}{\Vert \epsilon \Vert_2} > -a\sqrt{2 \log p} \}| \epsilon  \right) \right) = 
E\left[\prod_{i \in S}\left(1-\Phi \left( -a\sqrt{2 \log p} \right) \right) \right] = 
\]
\[
=\left[1-\left(1 -\Phi \left(a\sqrt{2 \log p} \right) \right)\right]^k \geqslant
\left[1-exp\left(-0.5 (a\sqrt{2 \log p})^2\right)  \right]^k = 
\left[1-\left( \frac{1}{p}\right)^{a^2}  \right]^k \geqslant
\]
\[ \geqslant 1 - k \left( \frac{1}{p}\right)^{a^2} \rightarrow 1\;,
\]
where in the above inequalities we used the bound (\ref{upper}) and Bernoulli's inequality. The convergence is a consequence of the fact that for a large enough $n$, $a \geqslant 1$ and the assumption that $k/p \rightarrow 0$. This ends the proof of $\Pi \rightarrow 1$.
\end{proof}

\bibliographystyle{plainnat}
\bibliography{bibliography}

\section{Supplementary materials}

\subsection{Proofs of Theorems \ref{Lem2} and \ref{Lem1}}

In this Section we present proofs of Theorems \ref{Lem2} and \ref{Lem1} and some additional interesting facts. The proofs are similar to each other on certain level. Therefore we organize them in a way that in authors opinion will be the most convenient to the reader.
We would like to start with presenting some facts showing a specific connection between the score vector $U_i(\hat{b})$ and the estimator $\hat{b}$.\\
Straightforward, from the proof of the Theorem \ref{Lem1}, we obtain following fact:
\begin{corollary}
\label{biUi}
Suppose assumptions of the Theorem \ref{Lem1} holds. When $\hat{b}_i \neq 0$ then $U_i(\hat{b})$ and $\hat{b}_i$ have the same sign (see inequality \ref{ineq:sign}).
\end{corollary}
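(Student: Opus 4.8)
The plan is to read the sign relation directly off the first-order optimality condition for the convex program \eqref{gSLOPE}. Since $l$ is convex and differentiable and the sorted $L_1$ norm $J_{\lambda}$ is convex, $\hat b$ is a minimizer if and only if $0\in\nabla l(\hat b)+\partial J_{\lambda}(\hat b)$, that is, $U(\hat b)=-\nabla l(\hat b)\in\partial J_{\lambda}(\hat b)$. The whole claim therefore reduces to a deterministic property of the subdifferential of $J_{\lambda}$: every subgradient $g\in\partial J_{\lambda}(w)$ satisfies $g_j w_j\ge 0$ for all $j$, and the inequality is strict (equivalently $\operatorname{sign}(g_j)=\operatorname{sign}(w_j)$) at coordinates where $w_j\neq 0$ and the governing tuning parameter is positive.

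To obtain $g_j w_j\ge 0$ I would sandwich $U_i(\hat b)$ between one-sided directional derivatives of $J_{\lambda}$. Testing the subgradient inequality $J_{\lambda}(v)\ge J_{\lambda}(\hat b)+\langle U(\hat b),\,v-\hat b\rangle$ against the perturbations $v=\hat b\pm t e_i$ and letting $t\downarrow 0$ yields $-J_{\lambda}'(\hat b;-e_i)\le U_i(\hat b)\le J_{\lambda}'(\hat b;e_i)$, where $J_{\lambda}'(\hat b;\cdot)$ is the one-sided directional derivative (finite because $J_{\lambda}$ is a finite convex function). The signs of these derivatives are immediate from the monotonicity of $J_{\lambda}$: if $|w_j|\le|w_j'|$ for every $j$ then $|w|_{(i)}\le|w'|_{(i)}$ for every $i$, so $J_{\lambda}(w)\le J_{\lambda}(w')$ because $\lambda\ge 0$. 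Hence for $\hat b_i>0$ and small $t>0$ we have $J_{\lambda}(\hat b-te_i)\le J_{\lambda}(\hat b)$, forcing $J_{\lambda}'(\hat b;-e_i)\le 0$ and therefore $U_i(\hat b)\ge -J_{\lambda}'(\hat b;-e_i)\ge 0$; the case $\hat b_i<0$ is symmetric and gives $U_i(\hat b)\le 0$. Thus $U_i(\hat b)$ and $\hat b_i$ can never carry opposite signs. A useful feature of this route is that the ordering ties of $|\hat b|$ affect only the numerical value of the directional derivatives, not their sign, so no delicate bookkeeping of the sorted norm is required here.

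The remaining step is to upgrade $U_i(\hat b)\,\hat b_i\ge 0$ to strict equality of signs, and this is where Theorem \ref{Lem1} enters: on the event $\{R=r\}$ it already provides $|U_i(\hat b)|\ge\lambda_r$ whenever $\hat b_i\neq 0$. When $\lambda_r>0$ (which holds for the strictly decreasing, positive sequences used throughout the paper, in particular for \eqref{lBH}) this rules out $U_i(\hat b)=0$, and combined with the non-opposite-sign property it gives $\operatorname{sign}(U_i(\hat b))=\operatorname{sign}(\hat b_i)$. Since exactly this strict lower bound is produced inside the proof of Theorem \ref{Lem1}, the corollary can in fact be quoted from that inequality rather than re-derived; the only point deserving care is the interpretation of ``same sign'' in the degenerate case $\lambda_r=0$, where the statement should be read as the weak relation $U_i(\hat b)\,\hat b_i\ge 0$.
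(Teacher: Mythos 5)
Your proof is correct, but it is organized differently from the paper's. The paper treats the corollary as a pure by-product of the proof of Theorem \ref{Lem1}: there, perturbing the $i$-th coordinate of $\hat b$ towards zero and using that the sorted penalty then drops by at least $h\lambda_r$ yields inequality (\ref{ineq:sign}), namely $\lambda_r \leqslant sgn(\hat b_i)\,U_i(\hat b)$, which delivers the sign relation and the quantitative bound in a single stroke; the paper's ``proof'' is literally a citation of that inequality. You instead re-derive the weak relation $U_i(\hat b)\,\hat b_i\geqslant 0$ from scratch --- via the first-order optimality condition $U(\hat b)\in\partial J_{\lambda}(\hat b)$, the directional-derivative sandwich $-J_{\lambda}'(\hat b;-e_i)\leqslant U_i(\hat b)\leqslant J_{\lambda}'(\hat b;e_i)$, and the monotonicity of $J_{\lambda}$ under coordinate-wise reduction of magnitudes --- and only then invoke the stated conclusion of Theorem \ref{Lem1}, $\vert U_i(\hat b)\vert\geqslant\lambda_r$, to exclude $U_i(\hat b)=0$ when $\lambda_r>0$. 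This is not circular, since Theorem \ref{Lem1} is proved independently of the corollary. Your route buys three things: it relies only on the statement of Theorem \ref{Lem1} rather than on an inequality internal to its proof; it avoids entirely the bookkeeping of which $\lambda_j$ is associated with $\vert\hat b_i\vert$ (including ties), since monotonicity of $J_{\lambda}$ settles the sign of the directional derivatives; and it makes explicit the degenerate case $\lambda_r=0$, in which only the weak relation $U_i(\hat b)\,\hat b_i\geqslant 0$ can be asserted --- a caveat the paper's formulation glosses over, although the weak relation is all that its later uses require (e.g.\ Corollary \ref{coro2} only needs $\vert U_i(\hat b)+a\hat b_i\vert=\vert U_i(\hat b)\vert+a\vert\hat b_i\vert$). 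What the paper's route buys is brevity and the sharper quantitative statement $sgn(\hat b_i)\,U_i(\hat b)\geqslant\lambda_r$ at no extra cost.
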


\begin{proposition}
\label{Ui_order}
Consider optimization problem given by generalized SLOPE (see: $\ref{gSLOPE}$ in the Article) with an arbitrary sequence $\lambda_1\geqslant \lambda_2 \geqslant ... \geqslant \lambda_p \geqslant 0$ and assume that $l(b)$ is a convex and differentiable function.\\
Under above assumptions if $\vert \hat{b}_j \vert > \vert \hat{b}_k \vert$ then $ \vert U_j(\hat{b}) \vert \geqslant \vert U_k(\hat{b}) \vert$.
\end{proposition}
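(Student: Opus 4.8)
The plan is to exploit the first-order optimality of $\hat b$ for the convex objective $F(b) = l(b) + J_\lambda(b)$ of \eqref{gSLOPE}, combined with a carefully chosen one-parameter perturbation that shifts magnitude out of coordinate $j$ and into coordinate $k$. Since $F$ is convex and $\hat b$ is a minimizer, the one-sided directional derivative $F'(\hat b; d)$ is nonnegative in every direction $d$. Writing $\nabla l(\hat b) = -U(\hat b)$ (see \eqref{score_v}), this reads $-\langle U(\hat b), d\rangle + J'_\lambda(\hat b; d) \ge 0$, and this single inequality is what I would exploit to read off the conclusion.

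First I would fix the direction. Set $s_j = \operatorname{sign}(\hat b_j)$, which is well defined because $|\hat b_j| > |\hat b_k| \ge 0$ forces $\hat b_j \neq 0$; set $s_k = \operatorname{sign}(\hat b_k)$ if $\hat b_k \neq 0$, and $s_k = \operatorname{sign}(U_k(\hat b))$ otherwise. Take $d = -s_j e_j + s_k e_k$, so that along $b(t) = \hat b + t d$ one has $|b_j(t)| = |\hat b_j| - t$ decreasing and $|b_k(t)| = |\hat b_k| + t$ increasing for small $t > 0$. The point of this choice is that, by Corollary \ref{biUi}, the sign of $U_i(\hat b)$ agrees with $s_i$ on the support (and we have forced this agreement at $k$ even when $\hat b_k = 0$), so that $-\langle U(\hat b), d\rangle = |U_j(\hat b)| - |U_k(\hat b)|$ collapses to exactly the quantity we wish to bound.

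Next I would compute $J'_\lambda(\hat b; d)$. Because $|\hat b_j| > |\hat b_k|$ strictly, for all sufficiently small $t$ we have $|\hat b_j| - t > |\hat b_k| + t$, so coordinates $j$ and $k$ do not cross, and coordinate $j$ retains its rank $r_j$ among the ordered magnitudes. If $\hat b_k \neq 0$ it retains its rank $r_k > r_j$; if $\hat b_k = 0$, then lifting $|b_k|$ off zero moves it into rank $R+1$, while the remaining zero coordinates contribute nothing to $J_\lambda$. In either case $J_\lambda(b(t))$ is \emph{affine} in $t$ near $0^+$, with slope $J'_\lambda(\hat b; d) = \lambda_{r_k} - \lambda_{r_j}$ (reading $r_k = R+1$ in the off-support case); since $\lambda$ is non-increasing and $r_k > r_j$, this slope is $\le 0$. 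Substituting into the optimality inequality yields $\bigl(|U_j(\hat b)| - |U_k(\hat b)|\bigr) + (\lambda_{r_k} - \lambda_{r_j}) \ge 0$, hence $|U_j(\hat b)| - |U_k(\hat b)| \ge \lambda_{r_j} - \lambda_{r_k} \ge 0$, which is the assertion.

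The main obstacle I anticipate is the rank bookkeeping in the directional derivative of the sorted norm: one must verify that the perturbation preserves the relevant ordering for small $t$, handle the degenerate case $\hat b_k = 0$ where the rank of $k$ jumps to $R+1$, and deal with possible ties among the magnitudes of $\hat b$. These I would dispatch by choosing $t$ small enough that only the two coordinates $j,k$ change their contributions, and, where ties among equal magnitudes occur, by a limiting argument (perturbing to break ties and passing to the limit, using continuity of $U$ and of $J_\lambda$). Everything else reduces to the elementary inequality above.
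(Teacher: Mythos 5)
Your core argument is the one the paper itself uses: move mass out of coordinate $j$ toward zero and into coordinate $k$ away from zero, invoke optimality of $\hat b$ together with convexity of the objective of (\ref{gSLOPE}), and read off $\vert U_j(\hat b)\vert - \vert U_k(\hat b)\vert \geqslant \lambda_{r_j}-\lambda_{r_k}\geqslant 0$ from the (locally affine) behaviour of $J_\lambda$ along the perturbation; the paper phrases this with finite differences $f(\hat b)-f(b)\leqslant 0$ and lets $h\to 0$, which is the same computation as your directional derivative. One point where you are actually more careful than the paper: when $\hat b_k=0$, the paper's perturbation direction carries the factor $\operatorname{sgn}(\hat b_k)=0$ and degenerates, whereas your choice $s_k=\operatorname{sgn}(U_k(\hat b))$ keeps the identity $-\langle U(\hat b),d\rangle = \vert U_j(\hat b)\vert-\vert U_k(\hat b)\vert$ intact (and if $U_k(\hat b)=0$ the claim is trivial anyway).

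The genuine defect is your treatment of ties. Ties among the magnitudes of $\hat b$ are not an exceptional configuration here: clustering coefficients at equal magnitudes is exactly what the sorted $\ell_1$ penalty does, so a proof of this proposition cannot defer that case. And the fix you propose, ``perturbing to break ties and passing to the limit,'' does not work: the only point at which you are entitled to the optimality inequality is $\hat b$ itself, so you may not perturb $\hat b$; and perturbing the data ($\lambda$ or $l$) changes the minimizer uncontrollably, so neither the ranks nor the hypothesis $\vert\hat b_j\vert>\vert\hat b_k\vert$ survive the limit in any obvious way. Fortunately no limiting argument is needed, because the one-sided directional derivative of $J_\lambda$ is perfectly computable at a tied point: if $\vert\hat b_j\vert$ is tied with other coordinates occupying ranks $a,\dots,b$, then for small $t>0$ the decreased coordinate $j$ falls to the bottom of its group and contributes slope $-\lambda_b$ (the \emph{smallest} $\lambda$ of its group), while the increased coordinate $k$ rises to the top of its group, occupying ranks $c,\dots,d$, and contributes $+\lambda_c$ (the \emph{largest} $\lambda$ of its group). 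Since $\vert\hat b_j\vert>\vert\hat b_k\vert$ strictly separates the two groups, $b<c$, hence $\lambda_b\geqslant\lambda_c$ and your final inequality goes through verbatim. This is precisely how the paper handles ties, with $\lambda_l$ and $\lambda_m$ defined as the smallest and the largest elements of $\lambda$ associated with $\vert\hat b_j\vert$ and $\vert\hat b_k\vert$ respectively.
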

Above Proposition enable further specification of the $(|b_1|,...,|b_p|)'$ ordering in case when $\vert b \vert_{(j)} = \vert b \vert_{(j+1)}$ for some $j$.
Let's introduce a notation $U_{(j)^\#}(b)$ and $b_{(j)^\#}$ on the score statistic and element of the vector $b$ associated with the element $ \vert b \vert_{(j)}$. Of course when $\vert b \vert_{(j)} = \vert b \vert_{(j+1)}$ the indexing is ambiguous and therefore we define it in a way that associated statistics have a following property:
 \[
\vert  U_{(j)^\#}(b) \vert \geqslant \vert U_{(j+1)^\#}(b) \vert
 \]

\begin{remark}
\label{remark}
By the above ordering and the Proposition \ref{Ui_order} we know that the vector $(\vert  U_{(1)^\#}(\hat{b}) \vert,...,\vert U_{(p)^\#}(\hat{b}) \vert)'$ associated with $(\vert\hat{b}\vert_{(1)},...,\vert \hat{b}\vert_{(p)})'$ is ordered $(\vert  U_{(1)^\#}(\hat{b}) \vert \geqslant ... \geqslant \vert U_{(p)^\#}(\hat{b}) \vert)$ and in consequence we can write $\vert  U_{(j)^\#}(\hat{b}) \vert = \vert  U(\hat{b}) \vert_{(j)}$.
\end{remark}

\begin{corollary}
\label{coro2}
Consider optimization problem given by generalized SLOPE (see: $\ref{gSLOPE}$ in the Article) with an arbitrary sequence $\lambda_1\geqslant \lambda_2 \geqslant ... \geqslant \lambda_p \geqslant 0$ and assume that $l(b)$ is a convex and differentiable function. Then for any arbitrary $a>0$ and $j = 1,...,p$, it holds:
\[
\vert  U_{(j)^\#}(\hat{b}) + a \hat{b}_{(j)^\#}\vert =
 \vert U(\hat{b}) + a \hat{b} \vert_{(j)} =  \vert T(a) \vert_{(j)}
\]
\end{corollary}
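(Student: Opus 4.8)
The plan is to exploit the sign alignment between $\hat{b}_i$ and $U_i(\hat{b})$ recorded in Corollary \ref{biUi}, together with the common ordering established in Remark \ref{remark}. First I would dispose of the second equality $|U(\hat{b}) + a\hat{b}|_{(j)} = |T(a)|_{(j)}$: it holds by the very definition $T(a) = U(\hat{b}) + a\hat{b}$ and requires no argument. The content lies entirely in the first equality, which asserts that the value of $|T(a)|$ read off at the index $(j)^\#$ associated with the $j$-th largest coordinate of $|\hat{b}|$ coincides with the $j$-th largest coordinate of $|T(a)|$ itself.

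The key step is to rewrite each coordinate of $T(a)$ as a sum of magnitudes. By Corollary \ref{biUi}, whenever $\hat{b}_i \neq 0$ the quantities $U_i(\hat{b})$ and $\hat{b}_i$ share the same sign; since $a>0$, the term $U_i(\hat{b}) + a\hat{b}_i$ carries this common sign, so that
\[
|T_i(a)| = |U_i(\hat{b}) + a\hat{b}_i| = |U_i(\hat{b})| + a|\hat{b}_i|.
\]
When $\hat{b}_i = 0$ the same identity holds trivially because $T_i(a) = U_i(\hat{b})$. Evaluating this at the index $(j)^\#$ gives $|U_{(j)^\#}(\hat{b}) + a\hat{b}_{(j)^\#}| = |U_{(j)^\#}(\hat{b})| + a|\hat{b}_{(j)^\#}|$.

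Next I would invoke the two monotonicity facts tied to the $(j)^\#$ ordering. By construction $|\hat{b}_{(j)^\#}| = |\hat{b}|_{(j)}$ is non-increasing in $j$, and by Remark \ref{remark} (which itself rests on Proposition \ref{Ui_order}) the associated scores satisfy $|U_{(j)^\#}(\hat{b})| = |U(\hat{b})|_{(j)}$, which is also non-increasing in $j$. Consequently the sum $|U_{(j)^\#}(\hat{b})| + a|\hat{b}_{(j)^\#}|$ is non-increasing in $j$. Since the map $j \mapsto (j)^\#$ is a permutation of $\{1,\ldots,p\}$, the sequence $\big(|T_{(j)^\#}(a)|\big)_{j=1}^p$ is a rearrangement of the magnitudes $\big(|T_i(a)|\big)_{i=1}^p$; having just been shown to be non-increasing, it must be the non-increasing rearrangement, i.e. $|T_{(j)^\#}(a)| = |T(a)|_{(j)}$, which is exactly the desired identity.

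The only point that deserves care — and the step I would treat most explicitly — is the claim that two sequences which are simultaneously non-increasing along the \emph{same} index ordering $(j)^\#$ have a non-increasing sum along that ordering. This is immediate here precisely because both $|U_{(j)^\#}(\hat{b})|$ and $|\hat{b}_{(j)^\#}|$ are indexed by the single ordering furnished by Remark \ref{remark}, so no competing orderings need to be reconciled; the substantive work was already done upstream in establishing that the $\hat{b}$-ordering and the $U(\hat{b})$-ordering agree.
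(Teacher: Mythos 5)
Your proof is correct and follows essentially the same route as the paper's: both rest on Corollary \ref{biUi} to split each coordinate of $T(a)$ into $\vert U_i(\hat{b})\vert + a\vert\hat{b}_i\vert$, and on Remark \ref{remark} to identify the $(j)^\#$ ordering of the scores with the sorted sequence $\vert U(\hat{b})\vert_{(j)}$. The only difference is presentational: where the paper compresses the final step into the remark that a sum of two sequences ordered along the same permutation is again ordered, you spell out the rearrangement argument explicitly, which is a slightly more careful rendering of the same idea.
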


\begin{proof}
Observe that:
\[
\vert  U_{(j)^\#}(\hat{b}) + a \hat{b}_{(j)^\#}\vert = 
\vert  U_{(j)^\#}(\hat{b}) \vert + \vert a \hat{b}_{(j)^\#}\vert = 
\vert U(\hat{b})\vert_{(j)} + \vert a \hat{b} \vert_{(j)}=
\vert U(\hat{b}) + a \hat{b} \vert_{(j)} =  \vert T(a) \vert_{(j)}
\] 
The first equality is a consequence of Corollary \ref{biUi}. The second results from the Remark \ref{remark}. The third one comes from a fact that sum of two positive ordered sequences is also ordered and the last is simply a consequence of the $T(a)$ definition. 
\end{proof}

Through out this section we shall denote the objective function of generalized SLOPE (see: $\ref{gSLOPE}$ in the Article) by:
\[
f(b) = l(b) + \sum_{j=1}^p \lambda_{j} \vert b \vert_{(j)}
\]
\begin{proof} The proof of the Theorem \ref{Lem1}.\\
We will start the proof by showing that:
\[
\left( \hat{b}_i \neq 0 \right)
\Rightarrow \left(\vert T_i(a) \vert >\lambda_r\right) 
\]
and
\[
\left( \hat{b}_i \neq 0 \right)
\Rightarrow \left(\vert U_i(\hat b) \vert \geqslant\lambda_r\right) 
\]

From the optimality of $\hat{b}$ we have following inequality for any vector $b$: 
\begin{equation}
0\geqslant f(\hat{b})-
f(b)= l(\hat{b})-l(b)+ \sum_{j=1}^p \lambda_{j} \vert \hat{b}\vert_{(j)} -
 \sum_{j=1}^p \lambda_{j} \vert b \vert_{(j)}
 \label{ineq:main_A.2}
\end{equation}
In the first step we will prove that for a vector $b=\hat{b}+h(0,...,0,\underbrace{(-1) sgn(\hat{b}_i)}_{i-th \;\; pos.},0,...,0)^T=\hat{b}+hL_i$ and small enough, positive $h$ we have:
\begin{equation}
\sum_{j=1}^p \lambda_{j} \vert \hat{b}\vert_{(j)} -
 \sum_{j=1}^p \lambda_{j} \vert b \vert_{(j)} \geqslant h\lambda_r
\label{ineq:A.2.1}
\end{equation}
Let's consider first the simplest scenario where $\vert \hat{b}_i \vert \neq \vert \hat{b}_j \vert$ for any $i \neq j$.
In this situation if we take 
\[h<\min_{j,\;\;i\neq j}\left(\left\vert  \vert\hat{b}_i\vert - \vert\hat{b}_j\vert \right\vert\right)\] 
then the ordering of an absolute value of the elements in the vector $b$ and $\hat{b}$ will be the same. Furthermore these vectors differ only in the i-th position and from the form of a vector $b$ we know that the absolute value of an i-th element in vector $b$ is smaller then in vector $\hat{b}$. In consequence we obtain for certain index $k$ that: 
\[
\sum_{j=1}^p \lambda_{j} \vert \hat{b}\vert_{(j)} -
 \sum_{j=1}^p \lambda_{j} \vert b \vert_{(j)} = h\lambda_k \geqslant h\lambda_r
\]
The last inequality is a consequence of assumptions $\#\{i : \hat{b}_i \neq 0\} = r$ and $\hat{b}_i \neq 0$ and a fact that $\lambda_r$ is the smallest element of vector $\lambda$ associated with non-zero elements of the vector $\hat{b}$.\\
Let's consider now more general scenario where two or more elements of the vector $\hat{b}$ have the same absolute value as $\vert \hat{b}_i \vert$. The reasoning is analogous, however in this situation value $\vert \hat{b}_i \vert$ will be associated with a set of elements of vector $\lambda$. Of course the power of associated set will be the same as the number of elements of vector $\hat{b}$ with absolute value the same as $\vert \hat{b}_i \vert$. Moreover the associated set of lambdas will contain consecutive elements of whole sequence. It is also straightforward that the indexing in a group of elements of vector $\hat{b}$ with the same module as $\vert \hat{b}_i \vert$ is ambiguous. For such situation if we take
\[h<\min_{j,\;\vert\hat{b}_i\vert \neq \vert\hat{b}_j\vert}\left(\left\vert  \vert\hat{b}_i\vert - \vert\hat{b}_j\vert \right\vert\right)\] 
we obtain for certain index $k$ that: 
\[
\sum_{j=1}^p \lambda_{j} \vert \hat{b}\vert_{(j)} -
 \sum_{j=1}^p \lambda_{j} \vert b \vert_{(j)} = h\lambda_k \geqslant h\lambda_r
\]
however this time $\lambda_k$ is the smallest element associated with $\vert \hat{b}_{i} \vert$.
The justification for the last inequality is analogous to the previously discussed situation.
\\
Using proven relation (\ref{ineq:A.2.1}) in the inequality (\ref{ineq:main_A.2}) we obtain:
\[
\lambda_r \leqslant\dfrac{l(b)-l(\hat{b})}{h}
\]
Via transformation of an expression on the right side we obtain:
\begin{equation}
\lambda_r \leqslant \dfrac{l(b)-l(\hat{b})}{h} =\dfrac{l(\hat{b}+hL_i)-l(\hat{b})}{h}  
\rightarrow sgn(\hat{b}_i) U_i(\hat{b})
\label{ineq:sign}
\end{equation}
for $h\rightarrow 0$. 
By adding to both sides factor $a\vert \hat{b}_i \vert$ we obtain:
\[
\lambda_r<\lambda_r + a\vert \hat{b}_i \vert \leqslant  sgn(\hat{b}_i) (U_i(\hat{b})+a\hat{b}_i) \leqslant 
\vert U_i(\hat{b})+a\hat{b}_i \vert
\]
Naturally if $a = 0$ then:
\[
\lambda_r  \leqslant 
\vert U_i(\hat{b})\vert
\]

This ends the proof in one direction.\\
It remains to show that
\[
\left( \hat{b}_i \neq 0 \right)
\Leftarrow \left(\vert T_i(a) \vert >\lambda_r\right) 
\]
and that when $\lambda_1>...>\lambda_p \geqslant 0$ then
\[
\left( \hat{b}_i \neq 0 \right)
\Leftarrow \left(\vert U_i(\hat b) \vert \geqslant\lambda_r\right) 
\]

The proof of implications will be shown via contradiction. Let's assume that $ \hat{b}_i = 0$ and $\lambda_r < \vert U_i(\hat{b}) +a\hat{b}_i \vert$ and consider vector $b=\hat{b}+h(0,...,0,\underbrace{1}_{i-th \;\; pos.},0,...,0)^T=\hat{b}+hL_i$. Let's recall that $\lambda_{r+1}$ is the largest element of the sequence $\lambda$ associated with the elements of the vector $\hat{b}$ with value 0. It is easy to notice that for small enough, positive $h$ ($h<\min_i\{\vert \hat{b}_i \vert, \; \hat{b}_i \neq 0\}$) we obtain:
\[
\sum_{j=1}^p \lambda_{j} \vert \hat{b}\vert_{(j)} -
 \sum_{j=1}^p \lambda_{j} \vert b \vert_{(j)} = - h\lambda_{r+1}
\]
and in consequence following inequality holds:
\[
0\geqslant f(\hat{b})-f(b)= l(\hat b)-l(b) -h\lambda_{r+1}
\]
Transforming above inequality we obtain:
\[
\lambda_{r+1}\geqslant -\dfrac{l(\hat{b}+hL_i)-l(\hat{b})}{h} \rightarrow U_i(\hat{b})
\]
Analogical reasoning for vector $b=\hat{b}+h(0,...,0,\underbrace{-1}_{i-th \;\; pos.},0,...,0)^T$ provides us:
\[
\lambda_{r+1}\geqslant - U_i(\hat{b})
\]
In consequence we obtain that $\vert U_i(\hat{b}) \vert \leqslant \lambda_{r+1}$. On the other hand we assumed that $\lambda_r < \vert U_i(\hat{b}) + a\hat{b}_i\vert=\vert U_i(\hat{b}) \vert$ (assumpt. $\hat{b}_i=0$) which leads to contradiction and ends the proof of first equivalence.

It is easy to notice that we used the assumption $\lambda_r < \vert U_i(\hat{b}) + a\hat{b}_i\vert=\vert U_i(\hat{b}) \vert$ at the end of the above reasoning.  Therefore in the case of the second equivalence the calculation are the same and in consequence we can again use condition $\vert U_i(\hat{b}) \vert \leqslant \lambda_{r+1}$. On the other hand we assume that $\lambda_{r+1} < \lambda_{r}\leqslant \vert U_i(\hat{b}) \vert$ which again leads to contradiction and ends the proof.
\end{proof}

\begin{proof} 
The proof of Proposition \ref{Ui_order}.\\
The proof is analogical to a proof of the Theorem \ref{Lem1}.\\
Let's consider a vector $b=\hat{b}+h(0,...,0,\underbrace{(-1) sgn(\hat{b}_j)}_{j-th \;\; pos.},0,...,0,\underbrace{sgn(\hat{b}_k)}_{k-th \;\; pos.},0,...,0)^T=\hat{b}+hL $. For small enough positive h:
\[
0\geqslant f(\hat{b})-f(b)= l(\hat b)-l(b)+ h\lambda_{l}  
- h\lambda_{m}\]
where $\lambda_{l}$ and $\lambda_{m}$ are the smallest and the largest element of the sequence $\lambda$ associated with the elements $\vert\hat{b}_j\vert$ and $\vert\hat{b}_k\vert$, respectively. This is a consequence of the vector $b$ form. By its definition the $j-th$ element is pulled towards 0 and the $k-th$ element is pushed away from zero. Now, from the assumption $\vert \hat{b}_j \vert > \vert \hat{b}_k \vert$ we know that $\lambda_l \geqslant  \lambda_m$.\\
Therefore, after transformation we obtain:
\[
0 \leqslant \lambda_{l}-\lambda_m\leqslant\dfrac{l(b)-l(\hat{b})}{h} \rightarrow \triangledown_Ll(\hat{b})= sgn(\hat{b}_j)U_j(\hat{b}) - sgn(\hat{b}_k)U_k(\hat{b})
\]
for $h \rightarrow 0 $.
The last equation is a consequence of differentiability of function $l(b)$. Using relation $sgn(\hat{b}_i)U_i(\hat{b}) = \vert U_i(\hat{b}) \vert$ (see corollary \ref{biUi}) we obtain:
\[
0 \leqslant \vert U_j(\hat{b}) \vert - \vert U_k(\hat{b}) \vert
\]
This ends the proof.
\end{proof}

\begin{proof} 
The proof of the Theorem \ref{Lem2}.\\
Recall that $T(a) = U(\hat{b})+a \hat{b}$ and that from the definition of the set $H_r$ we know that $\left(U(\hat{b})+a \hat{b}\right) \in H_r$ is fulfilled if and only if both following condition are satisfied:\\
$i)$\\
\[
\forall_{j \leqslant r} \sum_{i=j}^r \lambda_i < \sum_{i=j}^r \vert U(\hat{b})+a \hat{b} \vert_{(i)}
\]
$ii)$\\
\[
\forall_{j \geqslant r+1} \sum_{i=r+1}^j\lambda_i \geqslant \sum_{i=r+1}^j \vert U(\hat{b})+a \hat{b} \vert_{(i)}
\]
Let's assume $\#\{i : \hat{b}_i \neq 0\}=r$ and determine $j \leqslant r$.\\
Furthermore, let $I$ be a set of indexes for which the rank of absolute value of the element of a vector $\hat{b}$ is between $j$ and $r$. Similarly to the proof of Theorem \ref{Lem1} we consider vector $b$ defined in a following way:
\[
b=\Bigg\lbrace\begin{array}{l}
\hat{b}_i-h \;sgn(\hat{b}_i) \;\;\; i \in I \\
\hat{b}_i \;\;\; otherwise
\end{array}
\]
Again, for $h$ positive and small enough we obtain:
\[
0\geqslant f(\hat{b})-f(b)= l( \hat b)-l(b)+h\sum_{i=j}^r \lambda_i
\]
Above is a consequence of a construction of the vector $b$. It is obtained via pulling $r-(j-1)$ smallest nonzero elements in the vector $\hat{b}$ towards zero by the same factor $h$.\\
Transforming above inequality we obtain:
\[
\sum_{i=j}^r \lambda_i\leqslant \dfrac{l( b)-l(\hat b)}{h} \longrightarrow \; \sum_{i=j}^r sgn\left(\hat{b}_{(i)^\#}\right)U_{(i)^\#}(\hat{b})=
\sum_{i=j}^r \vert U_{(i)^\#
}(\hat{b})\vert=
\sum_{i=j}^r \vert U(\hat{b})\vert_{(i)}
\]
for $h\rightarrow0$. By adding to both sides of the inequality $\sum_{i=j}^r\vert a\hat{b} \vert_{(i)}$ we obtain:
\[
\sum_{i=j}^r 
\lambda_i<\sum_{i=j}^r (\lambda_i + \vert a\hat{b} \vert_{(i)}) \leqslant 
\sum_{i=j}^r  \left(\vert U(\hat{b})\vert_{(i)} + \vert a\hat{b} \vert_{(i)} \right) = \sum_{i=j}^r \vert U(\hat{b}) + a\hat{b} \vert_{(i)}
\]
which ends the proof that $\#\{i : \hat{b}_i \neq 0\}=r$ ensure (i).\\
Now let's determine $j \geqslant r+1$ and introduce set $J$ of indexes for which the rank of an absolute value of the element of a vector $\hat{b}$ is between $r+1$ and $j$.
Let's consider following set of vectors $b$:
\[
b=\Bigg\lbrace\begin{array}{l}
hE_i  \;\;\; i \in J \\
\hat{b}_i \;\;\; otherwise
\end{array}
\]
where $E_i=\pm 1$ and $h>0$.\\
For small enough h and for any vector $b$ we obtain:
\[
0\geqslant f(\hat{b})-f(b)= l(\hat b)-l(b)- h\sum_{i=r+1}^j \lambda_i
\]
Again it is associated with a form of vector b. However this time $j-r$ elements with value 0 in the vector $\hat{b}$ are pushed away from 0 by a factor $h$.\\
By transformation of the above inequality we obtain:
\[
\sum_{i=r+1}^j \lambda_i\geqslant \dfrac{l(\hat b)-l(b)}{
h} \longrightarrow \sum_{i=r+1}^j E_{(i)^\#}U_{(i)^\#}(\hat{b})
\]
for $h \rightarrow 0$.
This inequality is valid for any sequence of $\{E_i\}$. Therefore:
\[
\sum_{i=r+1}^j\lambda_i \geqslant 
\sum_{i=r+1}^j \vert U_{(i)^\#}(\hat{b})\vert =
\sum_{i=r+1}^j \vert U(\hat{b})\vert_{(i)} =
\sum_{i=r+1}^j \vert U(\hat{b}) + a\hat{b}\vert_{(i)} 
\]
The last equality is a consequence of a fact that $\hat{b}_{(i)}=0$ for $i>r$.\\
This ends the proof that $\#\{i : \hat{b}_i \neq 0\}=r$ ensure (ii).\\
To prove the equivalence in other direction we will show that there are at least and at most $r$ nonzero elements in $\hat{b}$.\\
Suppose that conditions (i) and (ii) are fulfilled and that $\#\{i : \hat{b}_i \neq 0\}=j<r$. From the first part of the proof we know that when $\#\{i : \hat{b}_i \neq 0\}=j$ then for $m>j$ we have $\vert\hat{b}\vert_{(m)}=0$ and:
\[
\sum_{i=j+1}^r\lambda_i \geqslant \sum_{i=j+1}^r \vert U(\hat{b})\vert_{(i)}
\]
On the other hand from (i) we have:
\[
\sum_{i=j+1}^r \lambda_i < \sum_{i=j+1}^r  \vert U(\hat{b}) \vert_{(i)}
\]
which leads to a contradiction.\\
Now let's assume that $\#\{i : \hat{b}_i \neq 0\}=j>r$. This implies that:
\[
\sum_{i=r}^j
 \lambda_i < \sum_{i=r}^j \vert U(\hat{b})+ a\hat{b} \vert_{(i)}
\]
On the other hand from (ii) we obtain:
\[
\sum_{i=r}^j
 \lambda_i \geqslant \sum_{i=r}^j \vert U(\hat{b})+ a\hat{b} \vert_{(i)}
\]
which again leads to a contradiction and ends the proof of the Theorem.
\end{proof}

\subsection{Results used in the proof of the Theorem \ref{main}}
In this section we present theoretical results that are used to prove the  Theorem \ref{main}.
\begin{definition}[Resolvent set]
Fix $S = supp(b^0)$ of cardinality k, and an integer $k^*$
obeying $k < k^* < p$. The set $S^* = S^*(S, k^*)$ is said to be a resolvent set if it is the union of S and
the $k^*-k$ indexes with the largest values of 
$\vert X_i' \epsilon \vert$
 among all $i \in \{1,...,p\} \setminus S$.
\end{definition}
 
\begin{lemma}[\citet{su2016}, Lemma 4.4]\label{resolvent1}
Suppose we consider the sequence of linear models (\ref{model}) where $p \rightarrow \infty, k/p \rightarrow 0$,$(k\log p)/n \rightarrow 0$. Moreover, let
\begin{equation}
k^* \geqslant max \left(\frac{1+c}{1-q}k, k + d\right)
\label{cond:k}
\end{equation}
for an arbitrary small constant $c > 0$, and where $d$ is a deterministic sequence diverging to infinity in such a way that $k^*/p \rightarrow 0$ and $(k^* \log p)/n \rightarrow 0$. Then
\begin{equation}
P(\{Supp(b^0)\cup Supp(\hat{b})\subseteq S^*\}) \rightarrow 1.
\label{b_in_S*}
\end{equation}
\end{lemma}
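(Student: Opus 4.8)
Since $S = Supp(b^0)\subseteq S^*$ holds by the very definition of the resolvent set, the statement reduces to $P(Supp(\hat b)\subseteq S^*)\to 1$. The plan is a primal--dual witness construction. I would let $\tilde b$ be the minimizer of the SLOPE objective with $X$ restricted to the columns $X_{S^*}$ and only the first $k^*$ tuning parameters $\lambda_1\ge\cdots\ge\lambda_{k^*}$, extended by zeros off $S^*$, and write $R=\#\{i:\tilde b_i\neq 0\}\le k^*$. It then suffices to certify that $\tilde b$ is the \emph{global} SLOPE minimizer, for then $\hat b=\tilde b$ and $Supp(\hat b)\subseteq S^*$. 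By Theorem \ref{Lem2} this certification is exactly the membership $T(a)=U(\tilde b)+a\tilde b\in H_R$ for the full problem. The conditions defining $H_R$ at levels $j\le R$ involve only the on-support coordinates and already hold because $\tilde b$ solves the restricted problem; the real content is therefore the tail conditions $\sum_{i=R+1}^{j}\lambda_i\ge\sum_{i=R+1}^{j}|T|_{(i)}$, which, since $T_i=U_i(\tilde b)$ off the support, amount to controlling the sorted partial sums of $|U_i(\tilde b)|$ over $i\notin S^*$.

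For these off-support coordinates I would decompose, for $i\notin S^*$,
\[
U_i(\tilde b)=X_i'\epsilon+X_i'X_{S^*}\big(b^0_{S^*}-\tilde b_{S^*}\big).
\]
The first term is bounded by the defining property of the resolvent set itself: $|X_i'\epsilon|$ for $i\notin S^*$ never exceeds the $(k^*-k)$-th largest null score, which by Gaussian order-statistic asymptotics is of order $\sigma\sqrt{2\log(p/k^*)}\sim\lambda^{BH}_{k^*}$. The cross term is handled by the same two Gaussian-concentration estimates this paper re-uses in Lemma \ref{Gamma_cond}: the cross-product bound (Lemma A.12 of \citet{su2016}) and the singular-value control of $X_{S^*}$ (Lemma A.11 of \citet{su2016}), together with the $\ell_2$ estimation bound $\|\hat b-b^0\|_2=O\big(\sigma\sqrt{k\log(p/k)}\big)$ (Theorem 1.2 of \citet{su2016}). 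Under $(k^*)^2\log p/n\to 0$ these combine to give a cross term of size $O\!\big(\sqrt{(k^*)^2\log p/n}\,\lambda^{BH}_{k^*}\big)=o(\lambda^{BH}_{k^*})$, so that $|U_i(\tilde b)|\le(1+o(1))\lambda^{BH}_{k^*}$ uniformly over $i\notin S^*$.

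It then remains to feed this into the tail majorization. The role of the hypothesis $k^*\ge\frac{1+c}{1-q}k$ is to guarantee, via the slope of the $\lambda^{BH}$ sequence, that the number of off-support scores exceeding the level $\lambda^{BH}_j$ stays with high probability strictly below $j$ throughout the relevant range, which is precisely what the inequalities $\sum_{i=R+1}^{j}\lambda_i\ge\sum_{i=R+1}^{j}|T|_{(i)}$ require; the $1/(1-q)$ reflects that a BH-type threshold at level $q$ admits up to a $q$-fraction of spurious crossings, and the factor $1+c$ supplies the strict slack that turns ``$\le$'' into a genuine separation. The three concentration events underlying the noise, singular-value and cross-product bounds each have probability tending to one, which gives the conclusion.

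The main obstacle, and the place where the sorted penalty genuinely goes beyond the LASSO case, is twofold. First, the optimality condition is \emph{not} coordinatewise: one cannot merely compare each $|U_i|$ to a single threshold but must control partial sums of the sorted off-support scores against partial sums of $\lambda$, so the Gaussian order-statistic estimates must be upgraded to statements that hold uniformly over all truncation levels $j$ appearing in the definition of $H_R$. Second, there is an intrinsic self-consistency issue: the witness $\tilde b$, and hence the cross term above, is defined through the very support one is trying to localize, so the a priori $\ell_2$ bound from \citet{su2016} must be invoked first and then fed back into the dual check rather than derived alongside it. Handling these two points cleanly is the heart of the argument and is exactly what Lemma 4.4 of \citet{su2016} provides.
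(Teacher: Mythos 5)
You should first be aware that the paper never proves this statement: it is imported verbatim from \citet{su2016} (their Lemma 4.4) and used as a black box, the only thing proved in its vicinity being Corollary \ref{resolvent}, which merely notes that the assumptions of Theorem \ref{main} allow one to pick a sequence $k^*$ to which the lemma applies. Your proposal therefore has to be measured against the original argument of \citet{su2016}, and in outline you have reconstructed exactly that argument (reduced problem on $X_{S^*}$ plus a dual certificate that the zero-padded reduced solution solves the full problem), which is the right plan. However, the sketch has genuine gaps. The first is the certification step itself: you invoke Theorem \ref{Lem2} as a \emph{sufficient} condition for optimality of the candidate $\tilde b$, but that theorem is a statement about the global minimizer $\hat b$ --- both $R$ and $T(a)$ there are computed at $\hat b$, and the equivalence $R=r \Leftrightarrow T(a)\in H_r$ relates two attributes of the same optimal point. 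It does not assert that an arbitrary candidate whose vector $U(\tilde b)+a\tilde b$ lies in $H_R$ must equal $\hat b$, and this converse is false in general: already for $p=1$, a candidate $\tilde b_1=1$ with $U_1(\tilde b)=-\lambda_1$ is not a minimizer (the subgradient condition fails because the signs of $U_1$ and $\tilde b_1$ disagree, something excluded at the true optimum by Corollary \ref{biUi}), yet $|T_1(a)|=|a-\lambda_1|>\lambda_1$ for $a>2\lambda_1$, so $T(a)\in H_1$. What the witness argument actually requires is the subgradient sufficiency $U(\tilde b)\in\partial J_\lambda(\tilde b)$, whose on-support part (including sign consistency) is inherited from the KKT conditions of the reduced problem and whose off-support part is the cumulative domination of the sorted off-$S^*$ scores by the tail weights; this is fixable, but it is a different statement from anything proved in this paper.

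The second gap is quantitative and twofold. (i) You bound the cross term via Theorem 1.2 of \citet{su2016}, but in that paper Theorem 1.2 is proved \emph{through} Lemma 4.4 (the resolvent reduction is the key step of the minimax upper bound), so invoking it here is circular; moreover the quantity you need to control is $\Vert \tilde b_{S^*}-b^0_{S^*}\Vert_2$ for the \emph{reduced} solution, and a bound on $\Vert \hat b - b^0\Vert_2$ for the full solution says nothing about it unless one already knows $\hat b=\tilde b$, which is what is being proved. The non-circular route, taken in \citet{su2016}, bounds the reduced solution directly: $X_{S^*}$ has $k^*\ll n$ columns with singular values close to one (the paper's Lemma \ref{X_SV}), so the reduced problem is strongly convex and yields the needed $\ell_2$ bound with no reference to the full problem. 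Your closing ``self-consistency'' remark names this difficulty but does not break the cycle. (ii) The estimate you actually derive, $|U_i(\tilde b)|\leqslant(1+o(1))\lambda^{BH}_{k^*}$ uniformly over $i\notin S^*$, cannot deliver the tail conditions: for truncation levels $j>k^*$ one has $\lambda_j\ll\lambda^{BH}_{k^*}$ (indeed $\lambda_p$ stays bounded), so a single-threshold bound cannot give $\sum_{i=R+1}^{j}\lambda_i\geqslant\sum_{i=R+1}^{j}|T|_{(i)}$ for all $j$. What is needed is domination of the entire sorted sequence of off-$S^*$ null scores by the shifted sequence $(\lambda_{k^*+1},\lambda_{k^*+2},\dots)$, uniformly in the truncation level --- this is precisely where the hypothesis $k^*\geqslant\frac{1+c}{1-q}k$ and the uniform order-statistics lemmas of \citet{su2016} enter. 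You describe this requirement correctly in your ``obstacles'' paragraph, but the bounds stated in the body of the proposal do not supply it.
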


\begin{proof}Proof of Corollary \ref{resolvent}.\\
Naturally, it is easy to notice that assumptions of the main Theorem in Article (\ref{main}) are stronger than those of the Lemma \ref{resolvent1}. In consequence under assumptions of the Theorem \ref{main}, we can choose a sequence $d$ and $k^*$ in such a way that \ref{b_in_S*} holds, $k^*/p \rightarrow 0$ and $((k^*)^2 \log p)/n \rightarrow 0$. 
\end{proof}

\begin{definition} 
Given two metric spaces (X, $d_X$) and (Y, $d_Y$), where $d_X$ denotes the metric on the set X and $d_Y$ is the metric on set Y, a function $f : X \rightarrow Y$ is called L - lipschitz continuous if there exists a real constant $L \geqslant 0$ such that, for all $x_1$ and $x_2$ in X,

 \[d_{Y}(f(x_{1}),f(x_{2}))\leq Ld_{X}(x_{1},x_{2})\]
 
\end{definition}

\begin{theorem}\citep{vershynin_2012}
\label{Borell}
Let $\zeta \sim N(0,\mathbb{I}_n)$ and $f$ be a L-lipschitz continous function in $\mathbb{R}^n$. Then 
\[
P(f(\zeta) \leqslant Ef(\zeta) + t) \geqslant 1 -  e^{-\frac{t^2}{2L^2}}
\]
for all $t\geqslant 0$.
\end{theorem}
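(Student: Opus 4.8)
The plan is to establish this sharp Gaussian concentration bound via the Gaussian logarithmic Sobolev inequality together with Herbst's argument, since this route produces precisely the constant $\tfrac{1}{2L^2}$ demanded by the statement. Because $f$ is only assumed $L$-Lipschitz, I would first reduce to smooth $f$ by mollification: convolving $f$ with a narrow Gaussian kernel $\phi_\varepsilon$ yields $f_\varepsilon \in C^\infty$ that remain $L$-Lipschitz (convolution never increases the Lipschitz constant) and converge to $f$ uniformly, since $|f_\varepsilon(x) - f(x)| \leqslant L\varepsilon\, E\|Z\| \to 0$ for $Z \sim N(0,\mathbb{I}_n)$. Uniform convergence forces $Ef_\varepsilon \to Ef$ and lets the tail bound for $f$ follow from the tail bounds for $f_\varepsilon$ by a limiting argument, so it suffices to treat smooth $f$, for which $\|\nabla f\|_2 \leqslant L$ pointwise.

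The analytic core is the Gaussian logarithmic Sobolev inequality: for $\zeta \sim N(0,\mathbb{I}_n)$ and smooth $g$,
\[
\operatorname{Ent}(g^2) := E\big[g^2\log g^2\big] - E\big[g^2\big]\log E\big[g^2\big] \leqslant 2\,E\big[\|\nabla g\|_2^2\big].
\]
I would apply this with $g = \exp(\lambda f/2)$ for $\lambda>0$. Setting $H(\lambda) = E[e^{\lambda f}]$ (finite for all $\lambda$, as Lipschitz $f$ grows at most linearly) and using $\|\nabla g\|_2^2 = \tfrac{\lambda^2}{4}\|\nabla f\|_2^2 e^{\lambda f} \leqslant \tfrac{\lambda^2 L^2}{4}e^{\lambda f}$, the inequality becomes
\[
\lambda H'(\lambda) - H(\lambda)\log H(\lambda) \leqslant \tfrac{\lambda^2 L^2}{2}H(\lambda).
\]
Herbst's trick is to divide by $\lambda^2 H(\lambda)$ and observe that the left-hand side equals $\tfrac{d}{d\lambda}\big[\tfrac{1}{\lambda}\log H(\lambda)\big]$; this derivative is therefore bounded by $\tfrac{L^2}{2}$, and integrating from $0$ (where $\tfrac1\lambda\log H(\lambda)\to Ef$ by L'H\^opital, since $H(0)=1$ and $H'(0)=Ef$) yields the sub-Gaussian bound $E[e^{\lambda(f-Ef)}] \leqslant e^{\lambda^2 L^2/2}$.

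It then remains to apply Markov's inequality in Chernoff form, $P(f-Ef>t) \leqslant e^{-\lambda t}E[e^{\lambda(f-Ef)}] \leqslant e^{-\lambda t + \lambda^2 L^2/2}$, and optimize over $\lambda>0$; the minimizer $\lambda = t/L^2$ gives $P(f(\zeta) > Ef(\zeta)+t) \leqslant e^{-t^2/(2L^2)}$, which is exactly the complement of the claimed inequality. I expect the main obstacle to be securing the \emph{sharp} constant, and this is precisely what forces the logarithmic Sobolev inequality (equivalently, the Gaussian isoperimetric inequality of Borell and Sudakov--Tsirelson): the more elementary ``smart path'' interpolation along $\zeta(\theta)=\zeta\sin\theta+\zeta'\cos\theta$ (with $\zeta'$ an independent copy of $\zeta$) reproduces only a non-sharp sub-Gaussian bound $E[e^{\lambda(f-Ef)}]\leqslant e^{\pi^2\lambda^2 L^2/8}$, whose exponent exceeds the optimal one. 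The only remaining technical point is the mollification step, which is routine but must be carried out to justify the pointwise gradient bound $\|\nabla f\|_2 \leqslant L$ underlying the computation.
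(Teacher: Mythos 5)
Your proof is correct, but note that the paper itself does not prove this statement: Theorem \ref{Borell} is quoted from \citet{vershynin_2012} as a known Gaussian concentration inequality and is used only as an external tool (to establish Corollary \ref{epsilon}), so you have supplied an argument where the paper supplies a citation. Your route---mollification to reduce to smooth $L$-Lipschitz $f$, the Gaussian logarithmic Sobolev inequality $\operatorname{Ent}(g^2)\leqslant 2E\|\nabla g\|_2^2$ applied to $g=e^{\lambda f/2}$, Herbst's differential inequality $\frac{d}{d\lambda}\bigl[\tfrac1\lambda\log H(\lambda)\bigr]\leqslant L^2/2$, and a Chernoff bound optimized at $\lambda=t/L^2$---is the standard modern path to the sharp constant $e^{-t^2/(2L^2)}$ around the \emph{mean}, and your computations check out: the entropy identity $\operatorname{Ent}(e^{\lambda f})=\lambda H'(\lambda)-H\log H$, the limit $\tfrac1\lambda\log H(\lambda)\to Ef$ as $\lambda\to 0$, the preservation of the Lipschitz constant under convolution, and the uniform bound $|f_\varepsilon-f|\leqslant L\varepsilon\,E\|Z\|$ are all right. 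The classical alternative behind the cited source is the Borell--Sudakov--Tsirelson isoperimetric inequality, which gives the Gaussian tail around the \emph{median} and needs an extra step (or a loss in the constant) to transfer to the mean; your log-Sobolev argument avoids this entirely, and your remark that the smart-path interpolation only yields the weaker exponent $\pi^2\lambda^2L^2/8$ is accurate. Two small points you should make explicit in a full write-up: (i) the limiting argument requires the slack $P(f>Ef+t)\leqslant P(f_\varepsilon>Ef_\varepsilon+t-2\delta_\varepsilon)$ with $\delta_\varepsilon=\sup_x|f_\varepsilon(x)-f(x)|\to 0$, after which you let $\varepsilon\to 0$ and use continuity in $t$; and (ii) the differentiation under the integral sign defining $H'(\lambda)$ should be justified by the integrability of $(1+|f|)e^{\lambda f}$ against the Gaussian measure, which follows from the linear growth of Lipschitz functions.
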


We shall denote by $X_I,\hat{b}_I,b^0_I$  a submatrix (subvector) of $X ,\hat{b},b^0$ consisted of columns (vector elements) with indexes in a set $I$.

\begin{lemma}[\citet{su2016}, Lemma A.12]
\label{Gamma_cond_1}
Under assumptions of the Theorem \ref{main} (in original paper the assumptions where less stringent) there exist a constant $C_q$ only depending on q such that for all $j$ we have:
\[
\sum_{i=1}^j \vert X_{{(S^*)}^c}'X_{S^*}(\hat{b}_{S^*}-b^0_{S^*}) \vert_{(i)} 
\leqslant C_q \sqrt{\frac{k^* \log p}{n}} \sum_{i = k^* +1}^{k^*+j}\lambda^{BH}_{i}
\]
with probability tending to 1.
\end{lemma}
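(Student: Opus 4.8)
The plan is to read the left-hand side as a linear image of the (restricted) estimation error and to reduce it, at a controlled entropy price, to order statistics of independent Gaussians, which are precisely the Benjamini--Hochberg thresholds. Write $w:=\hat b_{S^*}-b^0_{S^*}\in\mathbb R^{k^*}$ and $v:=X_{S^*}w\in\mathbb R^n$, so that the $i$-th coordinate of $X_{(S^*)^c}'X_{S^*}(\hat b_{S^*}-b^0_{S^*})$ is $\langle X_i,v\rangle$ for $i\in(S^*)^c$. If $v$ were a \emph{fixed} vector independent of $\{X_i\}_{i\in(S^*)^c}$, the claim would be immediate: conditionally each $\sqrt n\,\langle X_i,v\rangle/\|v\|_2$ is standard normal and i.i.d.\ over the $p-k^*$ indices, so $\sum_{i=1}^j\bigl|\langle X_i,v\rangle\bigr|_{(i)}$ concentrates near $\tfrac{\|v\|_2}{\sqrt n}\sum_{i\le j}\Phi^{-1}\!\bigl(1-\tfrac{i}{2p}\bigr)\approx\tfrac{\|v\|_2}{\sigma\sqrt n}\sum_{i\le j}\lambda_i^{BH}$, using the tail equivalence~(\ref{clas}). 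The whole difficulty is that $v$ is \emph{not} independent of the design columns in $(S^*)^c$, because the SLOPE minimiser $\hat b$ is a function of the entire matrix $X$. The strategy is therefore to separate the \emph{magnitude} of $v$ (which is tied to the true sparsity $k$) from the \emph{dependence}, and to pay for the dependence through the dimension $k^*$ of the space in which $v$ is forced to live.

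First I would control the magnitude $\|v\|_2$. Since $v$ lies in the column space of $X_{S^*}$, the singular-value estimate (Lemma A.11 of \citet{su2016}, reproduced as Lemma S.2.6) gives $\sigma_{\max}(X_{S^*})\le 1+o(1)$ on an event of probability tending to $1$, whence $\|v\|_2\le\sigma_{\max}(X_{S^*})\,\|w\|_2\lesssim\|\hat b-b^0\|_2$. The $\ell_2$ estimation bound of \citet{su2016} (Theorem~1.2, already invoked in~(\ref{b_bhat})) then yields $\|v\|_2\lesssim\sigma\sqrt{k\log(p/k)}\le\sigma\sqrt{k^*\log p}$ with probability tending to $1$.

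Second, and this is the heart of the argument, I would remove the dependence by a uniform (chaining) bound after passing to the dual formulation
\[
\sum_{i=1}^j\bigl|X_{(S^*)^c}'X_{S^*}w\bigr|_{(i)}
=\sup_{\substack{\|z\|_\infty\le1\\ \|z\|_0\le j}}\bigl\langle v,\ X_{(S^*)^c}z\bigr\rangle .
\]
Conditioning on $X_{S^*}$ fixes the $k^*$-dimensional subspace $\mathcal C=\mathrm{col}(X_{S^*})$ and leaves $\{X_i\}_{i\in(S^*)^c}$ i.i.d.\ $N(0,I_n/n)$ and independent of $\mathcal C$. The tempting shortcut of splitting off $\|v\|_2$ and supremising the per-direction ordered sum over the whole unit sphere of $\mathcal C$ is \emph{too lossy}: that supremum absorbs the full $\chi^2_{k^*}$ bulk of $\|P_{\mathcal C}X_i\|_2$ and inflates the bound by a spurious $\sqrt{k^*}$ (effectively forcing $k\lesssim\log p$). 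Instead I would keep $v$ coupled to its realised direction and bound the displayed Gaussian process by a multi-scale Dudley-type chaining indexed jointly by $z$ and the direction of $v$: the sparsity-$j$ entropy of $z$ (choosing $j$ among $p-k^*$ coordinates) produces the Gaussian quantiles $\approx\lambda_i^{BH}$, while the $k^*$-dimensional entropy of the direction is exactly what shifts the controlling ranks from $1,\dots,j$ to $k^*+1,\dots,k^*+j$ and is paid for by the slack $k<k^*$. The Lipschitz modulus needed to pass from a net to the continuum is again controlled by $\|X_{(S^*)^c}\|_{\mathrm{op}}\lesssim 1$ via the same singular-value estimate.

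Combining the magnitude bound from the first step with this uniform control gives
\[
\sum_{i=1}^j\bigl|X_{(S^*)^c}'X_{S^*}(\hat b_{S^*}-b^0_{S^*})\bigr|_{(i)}
\ \lesssim\ C_q\sqrt{\tfrac{k^*\log p}{n}}\sum_{i=k^*+1}^{k^*+j}\lambda_i^{BH},
\]
which is the assertion. The main obstacle is precisely the sharp uniform bound in the third step: the naive union bound over a single net does not recover the stated index-shifted sum, and obtaining it uniformly in $j$ demands the careful multi-scale entropy bookkeeping that is the content of Lemma A.12 in \citet{su2016}, on which I would rely for the explicit constant $C_q$.
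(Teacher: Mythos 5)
First, be clear about what you are being compared against: the paper does \emph{not} prove this statement at all. It is imported verbatim from \citet{su2016} (their Lemma A.12) and restated in the supplement only so that Corollary \ref{eq:L3.2.Sec} can be read off by specializing to $j=1$. So the question is whether your sketch amounts to an independent proof, and it does not: the step you yourself call the heart of the argument --- the multi-scale chaining bound that is supposed to simultaneously produce the Gaussian quantiles, the uniformity in $j$, and the shift of the controlling ranks from $1,\dots,j$ to $k^*+1,\dots,k^*+j$ --- is never executed, and in your final sentence you state that you would rely on Lemma A.12 of \citet{su2016} for exactly this bookkeeping. Since that is the statement being proved, the argument is circular. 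The parts you do carry out (the variational identity $\sum_{i\le j}|u|_{(i)}=\sup\{\langle u,z\rangle:\|z\|_\infty\le 1,\ \|z\|_0\le j\}$, and the magnitude bound $\|v\|_2\lesssim\sigma\sqrt{k\log(p/k)}$ via the singular-value estimate and (\ref{b_bhat})) are correct, but they are the easy half.

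Second, there is a substantive flaw in your reduction itself. You claim that conditioning on $X_{S^*}$ leaves $\{X_i\}_{i\in(S^*)^c}$ i.i.d.\ $N(0,I_n/n)$ and independent of the column span. That would be true if $S^*$ were deterministic, but $S^*$ is the resolvent set: by definition it contains the $k^*-k$ indices outside $S$ with the largest $|X_i'\epsilon|$. Conditionally on the identity of $S^*$ (which you must fix even to write $X_{(S^*)^c}$), the complement columns are not i.i.d.\ Gaussian; they are biased by the selection, being precisely those whose correlation with $\epsilon$ fell below the resolvent threshold. The proof in \citet{su2016} exploits this structure rather than fighting it: for $i\in(S^*)^c$ one decomposes $X_i=\frac{X_i'\epsilon}{\|\epsilon\|_2^2}\,\epsilon+P_{\epsilon^{\perp}}X_i$, where $P_{\epsilon^{\perp}}$ is the projection orthogonal to $\epsilon$. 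The orthogonal component is genuinely independent of the selection (which depends on the $X_i'\epsilon$ only) and of the reduced solution $\hat b_{S^*}$ (a function of $X_{S^*}$, $\epsilon$ and the identity of $S^*$), so it contributes standard Gaussian order statistics. The component along $\epsilon$ is controlled deterministically by the truncation: the $i$-th largest of $|X_l'\epsilon|$ over $l\in(S^*)^c$ equals the $(k^*-k+i)$-th order statistic over all nulls, because the top $k^*-k$ were removed into $S^*$; combined with the requirement $k^*\geqslant\frac{1+c}{1-q}k$ in (\ref{cond:k}), this rank is dominated by the $q$-quantile at rank $k^*+i$, i.e.\ by $\lambda^{BH}_{k^*+i}$. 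This truncation --- not an entropy count over a $k^*$-dimensional sphere of directions --- is what produces the shifted thresholds on the right-hand side. So besides the circularity, the mechanism you assign to the index shift is not the one that makes the lemma true, and the dependence you set out to neutralize (the direction $v$ is itself selected using $\{X_i'\epsilon\}_{i\in(S^*)^c}$ through the definition of $S^*$) would reappear inside your net argument.
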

\begin{proof}The proof of the Corollary \ref{eq:L3.2.Sec}.\\
Directly from the Lemma \ref{Gamma_cond_1} for $j=1$ we obtain:
\begin{equation}
\max_{i \in {(S^*)}^c}\vert [X_{{(S^*)}^c}'X_{S^*}(\hat{b}_{S^*}-b^0_{S^*})]_{i} \vert =
\vert X_{{(S^*)}^c}'X_{S^*}(\hat{b}_{S^*}-b^0_{S^*}) \vert_{(1)} 
\leqslant
C_q \sqrt{\frac{k^* \log p}{n}} \lambda^{BH}_{k^* +1}
\end{equation}
with probability tending to 1.
\end{proof}

\begin{lemma}(\citet{su2016}, Lemma A.11)
\label{X_SV}
Let $k< k^*< \min\{n,p\}$ be any (deterministic) integer. Denote by $\sigma_{max}$ and $\sigma_{min}$, the largest and the smallest singular value of $X_{S^*}$. Then for any $t>0$,
\[
\sigma_{min}> \sqrt{1-1/n}-\sqrt{k^*/n} - t
\]
holds with probability at least $1 - e^{-nt^2/2}$. Furthermore,
\[
\sigma_{max}< \sqrt{1-1/n}+\sqrt{k^*/n} + \sqrt{8k^* \log(p/k^*)/n} + t
\]
holds with probability at least $1 - e^{-nt^2/2} - (\sqrt{2}ek^*/p)^{k^*}$.
\end{lemma}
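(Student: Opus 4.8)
The plan is to treat $\sigma_{min}$ and $\sigma_{max}$ as Lipschitz functions of the underlying standard Gaussian entries and to combine the Gaussian concentration inequality of Theorem \ref{Borell} with sharp bounds on their expectations. Writing $X_{S^*}=\frac{1}{\sqrt n}G$, where $G$ is the corresponding matrix of i.i.d.\ $N(0,1)$ entries, the maps $G\mapsto\sigma_{max}(X_{S^*})$ and $G\mapsto-\sigma_{min}(X_{S^*})$ are both $1/\sqrt n$-Lipschitz in the Frobenius norm, since $|\sigma_{max}(A)-\sigma_{max}(B)|\le\|A-B\|_2\le\|A-B\|_F$ and likewise for $\sigma_{min}$. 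Theorem \ref{Borell} with $L=1/\sqrt n$ then gives deviation of each singular value above (resp.\ below) its mean by $t$ with probability at least $1-e^{-nt^2/2}$. The whole problem therefore reduces to bounding $\mathbb{E}\,\sigma_{min}(X_{S^*})$ from below and $\mathbb{E}\,\sigma_{max}(X_{S^*})$ from above; the genuine difficulty is that $S^*$ is \emph{data dependent} (it is built from the indices with the largest $|X_i'\epsilon|$), so $X_{S^*}$ is not a plain Gaussian matrix.

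The key device I would use to decouple the selection is to condition on $\epsilon$ and split each column along and orthogonally to $\epsilon$. Let $P=\mathbb{I}-\epsilon\epsilon'/\|\epsilon\|_2^2$ be the projection onto the $(n-1)$-dimensional space $\epsilon^{\perp}$. Conditionally on $\epsilon$, the components $\{X_i'\epsilon\}_i$ and the orthogonal parts $\{PX_i\}_i$ are independent, and the selection defining $S^*$ depends only on the former. Hence $\{PX_i:i\in S^*\}$ are still i.i.d.\ centred Gaussian vectors in $\epsilon^{\perp}$ with covariance $\frac1n P$, so $PX_{S^*}$ is distributed exactly as an $(n-1)\times k^*$ Gaussian matrix with $N(0,1/n)$ entries, \emph{with no selection bias}. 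For the smallest singular value this is all that is needed: since $\|X_{S^*}v\|_2\ge\|PX_{S^*}v\|_2$ for every $v$, we have $\sigma_{min}(X_{S^*})\ge\sigma_{min}(PX_{S^*})$, and the standard Gaussian comparison (Gordon) gives $\mathbb{E}\,\sigma_{min}(PX_{S^*})\ge\sqrt{(n-1)/n}-\sqrt{k^*/n}=\sqrt{1-1/n}-\sqrt{k^*/n}$. Concentration then produces the stated bound, and because $PX_{S^*}$ is genuinely Gaussian no union bound is required, which explains why the probability for $\sigma_{min}$ carries no extra term.

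For the largest singular value the orthogonal projection goes the wrong way, so I would instead write $X_{S^*}=PX_{S^*}+(\mathbb{I}-P)X_{S^*}$ and apply the triangle inequality for the operator norm. The first summand is handled exactly as above, contributing $\sqrt{1-1/n}+\sqrt{k^*/n}+t$ via Gordon's upper bound $\mathbb{E}\,\sigma_{max}\le\sqrt{(n-1)/n}+\sqrt{k^*/n}$ together with Theorem \ref{Borell}. The second summand has all its columns proportional to $\epsilon$, so it is rank one with operator norm $\frac{1}{\|\epsilon\|_2}\big(\sum_{i\in S^*}(X_i'\epsilon)^2\big)^{1/2}=\frac{1}{\sqrt n}\big(\sum_{i\in S^*}g_i^2\big)^{1/2}$, where $g_i:=\sqrt n\,X_i'\epsilon/\|\epsilon\|_2\sim N(0,1)$ conditionally on $\epsilon$. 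Here the selection bites: $\sum_{i\in S^*}g_i^2$ is essentially a sum of the largest order statistics of $p$ standard normals.

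I expect this last step to be the main obstacle and the source of the remaining terms in the statement. The plan is to control $\sum_{i\in S^*}g_i^2$ uniformly over all admissible index sets by a union bound: there are at most $\binom{p}{k^*}\le(ep/k^*)^{k^*}$ candidate sets, and for each fixed set the sum is $\chi^2_{k^*}$, whose upper tail at level $8k^*\log(p/k^*)$ is small enough that the product $\binom{p}{k^*}\cdot P(\chi^2_{k^*}>8k^*\log(p/k^*))$ collapses to $(\sqrt2\,ek^*/p)^{k^*}$. This simultaneously explains the extra magnitude $\sqrt{8k^*\log(p/k^*)/n}$ and the extra failure probability $(\sqrt2\,ek^*/p)^{k^*}$ appearing only in the $\sigma_{max}$ bound. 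Assembling the two summands by the triangle inequality and choosing the constants in the $\chi^2$ tail bound to match then yields the claimed high-probability upper bound on $\sigma_{max}(X_{S^*})$.
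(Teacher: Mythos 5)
The paper itself contains no proof of this lemma: it is imported verbatim from \citet{su2016} (their Lemma A.11), so there is no internal argument to compare yours against. Your reconstruction is correct, and it is essentially the argument of the cited source: conditioning on $\epsilon$ and splitting each column into its component along $\epsilon$ (which alone drives the selection of $S^*$) and its projection onto $\epsilon^{\perp}$ (which, conditionally on $\epsilon$ and on the selection, is an unbiased $(n-1)\times k^*$ Gaussian matrix with $N(0,1/n)$ entries) is exactly the decoupling that makes the data-dependent set $S^*$ tractable; Gordon's expectation bounds together with Theorem \ref{Borell} then produce the $\sqrt{1-1/n}\pm\sqrt{k^*/n}\pm t$ terms, the one-sided inequality $\sigma_{min}(X_{S^*})\geqslant\sigma_{min}(PX_{S^*})$ correctly explains why the $\sigma_{min}$ bound carries no extra failure probability, and the rank-one remainder is handled by a union bound over $\binom{p}{k^*}$ subsets with a $\chi^2_{k^*}$ tail. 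The constants you left implicit do check out: with $P\left(\chi^2_{k^*}\geqslant 8k^*L\right)\leqslant (8eL)^{k^*/2}e^{-4k^*L}$ for $L=\log(p/k^*)$, the product with $\binom{p}{k^*}\leqslant (ep/k^*)^{k^*}$ is at most $\left(\sqrt{2}\,ek^*/p\right)^{k^*}$, because $4e\log(u)/u^{4}\leqslant 1$ for all $u\geqslant 1$.
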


\begin{proof} The proof of relations \ref{pomoc1234} and \ref{pom12}.\\
Recall the form of \ref{pomoc1234}:
\begin{equation}
\sum_{r = k+2}^{k^*}F_E(2 p^{-\delta}, r-k-1,1) \rightarrow 0
\end{equation}
Furthermore, recall that
\[
F_E(x, m,1) = 1- \exp(-x)\sum_{l = 0}^{m-1}\frac{1}{l!}(x)^l =  \exp(-x)\sum_{l = m}^{\infty}\frac{1}{l!}(x)^l
\]
Therefore
\[
\sum_{r = k+2}^{k^*}F_E(2 p^{-\delta}, r-k-1,1) = 
\exp(-2 p^{-\delta})\sum_{r = k+2}^{k^*} \sum_{l = r-k-1}^{\infty}\frac{1}{l!}(2 p^{-\delta})^l = 
\]
\[
\exp(-2 p^{-\delta})\left( \sum_{l = 1}^{\infty}\frac{1}{l!}(2 p^{-\delta})^l +\sum_{l = 2}^{\infty}\frac{1}{l!}(2 p^{-\delta})^l+...+\sum_{l = k^*-k-1}^{\infty}\frac{1}{l!}(2 p^{-\delta})^l \right) = 
\]
\[
 =\exp(-2 p^{-\delta})\left[ \sum_{l = 1}^{k^*-k-1}l\left(\frac{1}{l!}(2 p^{-\delta})^l\right) + 
\sum_{l = k^*-k}^{\infty}\underbrace{(k^*-k-1)}_{\leqslant l} \left(\frac{1}{l!}(2 p^{-\delta})^l\right)  \right] \leqslant 
\]
\[
\exp(-2 p^{-\delta})(2 p^{-\delta})\sum_{l = 1}^{\infty}\frac{1}{(l-1)!}(2p^{-\delta})^{(l-1)} = (2 p^{-\delta}) \rightarrow 0 
\]
which we wanted to show. Now, recall the form of the \ref{pom12}:
\begin{equation}
\sum_{r = k+2}^{k^*}P\left(\frac{1}{p-k}\sum_{j = 1}^{p-k} E_j>2\right) = (k^*-k-1)P\left(\frac{1}{p-k}\sum_{j = 1}^{p-k} E_j>2\right) \rightarrow 0
\end{equation}
To see that above is also valid observe that:
\[
E\left(\frac{1}{p-k}\sum_{j = 1}^{p-k} E_j\right) = 1
\]
\[
Var\left(\frac{1}{p-k}\sum_{j = 1}^{p-k} E_j\right) =\left(\frac{1}{p-k}\right)^2 \sum_{j = 1}^{p-k}Var\left( E_j\right) = \frac{1}{p-k} = \sigma_1^2
\]
Therefore do to the Chebyshev's inequality we obtain that
\[
P\left(\frac{1}{p-k}\sum_{j = 1}^{p-k} E_j>2\right) =
P\left(\frac{1}{p-k}\sum_{j = 1}^{p-k} E_j-1>1\right)  \leqslant
\]
\[
 \leqslant
P\left(\Big\vert\frac{1}{p-k}\sum_{j = 1}^{p-k} E_j-1\Big\vert>\sigma_1\sqrt{(p-k)}\right) \leqslant \frac{1}{p-k}
\]
and in consequence
\[
(k^*-k-1)P\left(\frac{1}{p-k}\sum_{j = 1}^{p-k} E_j>2\right)\leqslant \frac{k^*-k-1}{p-k} \rightarrow 0
\]
which ends the proof. 
\end{proof}

\end{document}